\newtheorem{Assumption}{Assumption}
\newtheorem{Theorem}{Theorem}
\newtheorem{Definition}{Definition}
\newtheorem{Remark}{Remark}
\newtheorem{Counterexample}{Counterexample}
\newtheorem{Lemma}{Lemma}
\newtheorem{Corollary}{Corollary}
\newtheorem{Proposition}{Proposition}
\newtheorem{Problem}{Problem}
\DeclareMathOperator{\VEC}{vec}
\DeclareMathOperator{\DIAG}{diag}
\DeclareMathOperator{\TR}{Tr}
\DeclareMathOperator{\VAR}{AC}
\newcommand{\GMFS}{\mathcal{G}_{\text{DSS}}}
\newcommand{\GNS}{\mathcal{G}_{\text{NS}}}
\newcommand{\GC}{\mathcal{G}_{\text{PS}}}
\newcommand{\Exp}[1]{\mathbb{E}[ #1]}
\newcommand{\Zero}{\mathbf{0}_{d_x \times d_x}}
\newcommand{\barF}{{{}\bar F_t(\alpha)}}
\newcommand{\DeltaF}{{{}F_t(\alpha)}}
\newcommand{\barK}{{{}\bar K_t(\alpha)}}
\newcommand{\DeltaK}{{{}K_t(\alpha)}}
\newcommand{\barM}{{{}\mathbf P^\alpha_t}}
\newcommand{\DeltaM}{{{}P^\alpha_t}}
\newcommand\edit{\textcolor{black}}
\newcommand{\Compress}{\medmuskip=0mu
\thinmuskip=0mu
\thickmuskip=0mu}
\tikzset{%
  every neuron/.style={
    circle,
    draw,
    minimum size=.7cm
  },
  neuron missing/.style={
    draw=none, 
    scale=3,
    text height=0.333cm,
    execute at begin node=\color{black}$\vdots$
  },
}
\author{Jalal~Arabneydi, Amir G. Aghdam, and Roland P. Malham\'e, 
\thanks{Jalal Arabneydi and Amir G. Aghdam are with the  Department of Electrical and Computer Engineering, 
        Concordia University, 1455 de Maisonneuve Blvd, Canada. 
        {\tt\small Email:jalal.arabneydi@mail.mcgill.ca} and
        {\tt\small Email:aghdam@ece.concordia.ca}}
\thanks{Roland Malham\'e is with the  Department of Electrical Engineering, University of Polytechnique Montreal, Canada.  {\tt\small Email: roland.malhame@polymtl.ca}}
}
\title{Explicit Sequential Equilibria in LQ Deep Structured Games and    Weighted  Mean-Field Games: A Unified Non-Standard Riccati Equation}
\title{Explicit Sequential Equilibria in LQ Deep Structured Games and    Weighted  Mean-Field Games}
\begin{document}
\maketitle
\thispagestyle{empty}
\pagestyle{empty}


\begin{abstract}
In this paper, we investigate  a class of  nonzero-sum dynamic stochastic  games, where  players have linear dynamics  and quadratic cost functions. The players are coupled in both dynamics and cost through  a linear regression (weighted average) as well as a  quadratic regression (weighted covariance  matrix) of  the states and actions, where  the linear regression of states is called deep state. We study collaborative and non-collaborative games under three information structures:  perfect sharing, deep state sharing, and no sharing for three different types of weights: positive, homogeneous and asymptotically vanishing weights.   For perfect  and  deep state sharing information structures, we propose a new  technique  by introducing  gauge transformation to solve for the best-response equations of players and    identify a few sufficient conditions  under which a  unique subgame perfect Nash equilibrium exists. The equilibrium is linear in the local state and  deep state, and the  corresponding  gains are obtained by solving  a novel  non-standard Riccati equation (whose   dimension  is independent of the number of players, thus making the solution scalable).  
When the information structure is  no-sharing and the number of players is asymptotically large, one asymptotic population-size-dependent equilibrium and one  asymptotic population-size-independent  equilibrium (also called sequential weighted mean-field equilibrium)  are proposed, and their convergence  to the infinite-population limits are established.
 In addition, the main results are extended   to  infinite-horizon cost function, and   generalized   to multiple  linear regressions and heterogeneous sub-populations. A numerical example is provided to demonstrate the difference between  the  two proposed approximate  equilibria. To the best of our knowledge, this paper  is  the first to propose a  unified framework to obtain  the exact solution in   closed form for an arbitrary number of  players. 
\end{abstract}

\section{Introduction}\label{sec:introduction}
The history of nonzero-sum linear quadratic (LQ)  games  can be traced back to work of  Case \cite{case1969toward} and Starr and Ho~\cite{starr1969a,starr1969b}  over five decades ago.  Early results  demonstrated  that  finding the Nash solution with perfect information structure  can be much more  difficult than  solving the LQ optimal control problem.  For  instance,  a   two-player game with strictly convex  cost functions may be not playable (i.e.,  the game  may not  admit a solution)~\cite{lukes1971global};  may admit  uncountably many   solutions (where each solution leads to a different set of pay-offs)~\cite{basar1976uniqueness};  may admit a unique Nash solution in a stochastic game  but admit infinitely many  solutions  in  its deterministic form~\cite{basar1976uniqueness},  or  may have a unique Nash solution for an arbitrary   finite horizon  but have no solution, a unique solution, or infinitely many solutions in the infinite horizon~\cite{Papavassilopoulos1984}. For a stochastic LQ game, it is well known  that the necessary and sufficient condition for the Nash solution to exist uniquely is equivalent to the existence and uniqueness condition for the solution of  a particular  set of \emph{coupled} Riccati equations~\cite{basar1976uniqueness}.
For a deterministic LQ game,  however,  it is   more difficult to establish  an existence condition for a unique Nash solution because the above-mentioned  coupled Riccati equations do not  necessarily identify all  the possible Nash solutions. The interested reader is referred to~\cite{eisele1982nonexistence,engwerda1998open}  for some counterexamples where the coupled Riccati equations do not have a solution but the deterministic game admits  one,  and to~\cite{basar1976uniqueness} for a counterexample where the coupled Riccati equations have a unique solution but  the deterministic game admits uncountably many solutions.  In addition, as  the number of players increases,  the  curse of dimensionality  exacerbates the above challenges from a computational  viewpoint.  
On the other hand, due to  limited communication resources  in practice,  it may not be feasible to implement   the perfect information structure,  especially when the number of players is large. In such a case,  one may need to consider  ``imperfect"  structures with players having private information,  making  it conceptually difficult for players to reach an agreement. Finding the  Nash solution in  LQ games under  imperfect information structure    involves  non-convex optimization,  which  in general  destroys  the linearity of the solution. For instance, it is shown in~\cite{Basar1974counter} via a counterexample that nonlinear strategies  can   outperform linear strategies.  As a result,   it has been  a long-standing challenge  in game theory  to identify  games that possess  tractable solutions  irrespective of  the number of players. The reader is referred to~\cite{Basarbook,dockner2000differential,engwerda2005lq} for more details on  the theory and application of LQ games.

Due to difficulties outlined in the previous paragraph and inspired by recent developments in  deep structured teams~\cite{Jalal2019risk,Jalal2019MFT},    we  consider a class of  LQ deep structured games  wherein  the interaction between players is modelled by a set of linear regressions (weighted averages) of the states and actions of players.    These models are called deep structured because of the fact that  the interaction between the players  is  similar to  that of the neurons of a deep feed-forward neural network.  In this paper, we focus on three classes of weights: homogeneous weights, asymptotically vanishing (negligible) weights, and positive weights with social cost function. To this end,  we consider  three information structures: perfect  sharing,  deep state  sharing and no  sharing,  where deep state refers to the linear regression of the states.   In perfect sharing  information structure, every player has access to the  joint state vector; in the deep state sharing, every player has only access to its local state and the deep state, and  in the no-sharing structure, every player has only access to  its local state.  Under perfect  and  deep state  sharing information structures,  we introduce a transformation-based technique  to identify  a  few sufficient conditions for the existence  of a  unique subgame perfect Nash equilibrium  in terms of a novel scalable non-standard Riccati equation.  In addition, we propose two  approximate  sequential equilibria  for  the no-sharing information structure that are both  sequential asymptotic reciprocal and sequential asymptotic Nash  equilibria  under mild assumptions, and  converge to the subgame perfect Nash equilibrium as the number of players goes to infinity.

The closest field of research to   LQ deep structured games is 
LQ  mean-field  games,  which is a special  class of exchangeable games with  homogeneous weights and asymptotically large number of players~\cite{huang2003individual,huang2007large,Lasry2006I,Lasry2006II}.
 In the simplest formulation of LQ mean-field games,  the players are  coupled  in the dynamics and cost  through the empirical mean of states (i.e.,  deep state with homogeneous weights) with  a tracking cost formulation. When  the number of players  goes to infinity,   the effect of a single player on other players  is  negligible; hence,  the  infinite-population  game reduces  to a \emph{constrained}  optimal control problem where: (a)  a generic player computes its best-response strategy  by solving an optimal tracking problem  described by a backward Hamilton-Jacobi-Bellman equation  with respect to  some hypothesized reference trajectory, and (b)   for the reference trajectory to be admissible, it is constrained  to be equal to    
the mean-field (deep state of the infinite-population game with homogeneous weights) whose dynamics is  expressed  by a forward advection or Fokker-Planck-Kolmogorov equation.  This is a fixed-point requirement, and sufficient conditions can be imposed to guarantee the existence of a unique solution~\cite{huang2007large}.  The resultant equilibrium is called mean-field equilibrium,  which  is not generally a sequential equilibrium, and its extension  to   non-negligible components, e.g., major-minor and common-noise  is conceptually challenging because the mean-field trajectory becomes unpredictable.  On the other hand,  when players use identical state feedback strategies and share the same cost function, the mean-field  converges to  the expectation of the generic player's state due to the strong law of large numbers.  This converts  the above two-body optimization problem into  a one-body optimization problem of McKean-Vlasov type, where the expectation of the state appears in the dynamics and cost function. The  resultant  problem  is known as the LQ mean-field-type  game problem.  The solution of this problem is obtained by forward-backward equations in~\cite{carmona2013control} and  by backward  equations (Riccati equations) in~\cite{yong2013linear,elliott2013discrete,duncan2018linear} In contrary to the above  results  that are only relevant when the number of players is  large, the authors in~\cite{bardi2014linear,Priuli2015} consider a finite-population  LQ mean-field  game wherein  players are  decoupled in   dynamics. When attention is restricted to  Gaussian random variables with  identical and affine stationary feedback  strategies, the finite-population game reduces to a constrained optimal control problem where the existence of a  Nash solution is conditioned on the solution of a  coupled  algebraic Riccati-Sylvester equation.\footnote{Note that the above mean-field models  are  continuous-time  while our model is in  discrete-time. However,  it is straightforward to translate  the results from continuous time to discrete time and vice versa, specially for  LQ models.}  
For more details on the theory and applications of mean-field  models, the reader is referred to~\cite{AdlJohWei2015Equilibria,Bensoussan2016, cardaliaguet2015master, Tembine2020book,Caines2018book,carmona2018probabilistic} and references  therein.

In contrast to LQ mean-field games and mean-field-type games,  we  consider a more general formulation that  encompasses mean-field  games and mean-field-type  games models, simultaneously, with any finite number of players (that does
not necessarily reduce to an optimal control problem). In particular, we   present an  explicit formulation for the   subgame perfect Nash equilibrium in terms of  a novel  non-standard Riccati equation,  by introducing a gauge transformation technique,   for any arbitrary number of players (not necessarily large) with possibly correlated noises (not necessarily independent) and population-size-dependent models (not necessarily population-size-independent).  Furthermore,  when the number of players is sufficiently large, we  compute the infinite-population solution for any arbitrary vanishing weights (not necessarily homogeneous weights). For no-sharing information structure,  we consider two approximate solution concepts, called sequential asymptotic reciprocal and Nash equilibria, and  propose one population-size-dependent and one population-size-independent equilibria, where the former often outperforms the latter.
For a more detailed comparison with mean-field models, the reader is referred to Section~\ref{sec:special}.  It is to be noted that our  framework  is different from the one in~\cite{bardi2014linear,Priuli2015} because we provide a stronger result albeit  under a richer information structure. In particular, we consider  the case in which players are  coupled in both dynamics and cost, and random variables are not necessarily Gaussian. Without restricting attention  to any particular class of strategies or infinite-horizon cost function, we  show  that the  Nash equilibrium is identical across players  and is  linear  in the local state and deep state.

The remainder of this paper is organized as follows. In Section~\ref{sec:problem}, the problem  is formulated and  the main  contributions of the paper are outlined. In Section~\ref{sec:mfs},  the subgame perfect Nash solution  under  perfect and  deep state   sharing  information structures is  identified, and   in Section~\ref{sec:ns},  two  approximate sequential equilibria  under  no-sharing information structure are proposed.  The main results are extended to infinite-horizon cost function in Section~\ref{sec:infinite}, and  a detailed comparison with mean-field models is presented in Section~\ref{sec:special}.  In Section~\ref{sec:generalization}, a brief discussion on  possible generalizations  of the presented results is provided and in Section~\ref{sec:numerical},  a numerical example is given to validate the theoretical findings. Finally in  Section~\ref{sec:conclusion}, the main results are summarized and conclusions are drawn.

\subsection*{Notation}
Throughout this paper,  $\mathbb{N}$ and $\mathbb{R}$ denote   natural and real numbers, respectively. For  any $n \in \mathbb{N}$, the short-hand notations $x_{1:n}$ and $\mathbb{N}_n$ are used to denote  the vector $(x_1,\ldots,x_n)$ and  the set $\{1,\ldots,n\}$, respectively.   For any vectors $x$, $y$, and $z$, $\VEC(x, y, z)$ represents the vector $[x^\intercal, y^\intercal, z^\intercal]^\intercal$. Given any square matrices $A$, $B$, and $C$, $\DIAG(A, B, C)$ is the block diagonal  matrix with matrices $A$, $B$ and $C$ on its main diagonal.  Let $A$ be an $n \times n$ block matrix; then $A^{i,j}$ refers to the  block matrix  located  at the $i$-th row and the $j$-th column, $i,j \in \mathbb{N}_n$.  For any $n,m \in \mathbb{N}$, $\mathbf{1}_{n\times m}$ and $\mathbf 0_{n \times m}$ denote  an $n$-by-$m$ matrix whose   arrays are all ones and  zeros, respectively.  Given a set  $\mathbf x=\{x_1,\ldots, x_n\}$,  $\mathbf x^{-i}$  is the set without  its $i$-th component, $i \in \mathbb{N}_n$. $\TR(\boldsymbol \cdot)$ is the trace of a matrix, $\VAR(\boldsymbol \cdot)$ is the auto-covariance  matrix of a random vector, $\Exp{\boldsymbol \cdot}$  is the expectation of an event, and $ \prec$  ($\preceq$) is the element-wise inequality.

\section{Problem formulation}\label{sec:problem}
Consider a  stochastic  dynamic game with  $n \in \mathbb{N}$  players  that have  linear dynamics with quadratic cost functions.  In  order to  have a meaningful game, the number of players  $n$ is assumed to be greater than $1$ in the sequel. Let $x^i_t \in \mathbb{R}^{d_x}$,  $u^i_t \in \mathbb{R}^{d_u}$ and $w^i_t \in \mathbb{R}^{d_x}$,   $d_x,d_u \in \mathbb{N}$,  denote  the local  state,  the local action and the local noise of player $i \in \mathbb{N}_n$  at time $t \in \mathbb{N}_T$, where $T \in \mathbb{N}$ denotes the game horizon.  Denote  by $\mathbf{x}_t=\VEC(x^1_t,\ldots,x^n_t) $, $\mathbf{u}_t=\VEC(u^1_t,\ldots,u^n_t)$ and  $\mathbf{w}_t=\VEC(w^1_t,\ldots,w^n_t)$   the stacked state,  the stacked   action, and the stacked noise at time $t \in \mathbb{N}_T$, respectively.  Let $\alpha^i_n \in \mathbb{R}$ denote the \emph{influence factor} (weight) of player $i$ among its peers, where interactions between players are modelled by the following linear regressions (weighted averages):
\begin{equation}\label{eq:def:mean-field}
 \bar x_t:= \sum_{i=1}^n \alpha^i_n x^i_t,  \quad  \bar u_t:= \sum_{i=1}^n \alpha^i_n u^i_t.
\end{equation} 
For ease of reference and inspired by  recent developments in deep structured models~\cite{Jalal2019risk,Jalal2019MFT}, we refer to the above linear regressions as \emph{deep state} and \emph{deep action} in the sequel. We assume that the weights are normalized such that $\sum_{i=1}^n \alpha^i_n=~1$.
 
The dynamics of player $i \in \mathbb{N}_n$  at time $t \in \mathbb{N}_T$  is  coupled to other players as follows:
\begin{equation}\label{eq:dynamics-mf}
 x^i_{t+1}=A_t x^i_t+B_tu^i_t+ \bar A_t \bar{x}_t+ \bar B_t \bar{ u}_t+ w^i_{t},
\end{equation}
where  $A_t$, $B_t$, $\bar A_t$ and $\bar B_t$ are  time-varying matrices of appropriate dimensions, and  $w^i_t$ is  a zero-mean local noise process.  Let the  auto-covariance matrices of the initial states  and local noises be  uniformly bounded in $t$ and $n$, i.e., there exist  constant matrices $C_x$ and $C_w$ (that do not depend on $t$ and $n$) such that   $\mathbf 0  \preceq \VAR(\mathbf x_1) \preceq C_x$ and  $\mathbf 0  \preceq \VAR(\mathbf w_t) \preceq C_w$, $t \in \mathbb{N}_T$, $n \in \mathbb{N}_n$.  Note that  the boundedness of the auto-covariance matrices  is a realistic assumption in practice.   The random variables $(\mathbf{x}_1,\mathbf w_1,\ldots, \mathbf w_T)$  are defined on a common probability space and are mutually independent. 

 The per-step cost of player $i \in \mathbb{N}_n$ at time $t \in \mathbb{N}_T$ is a function of its local state and  action as well as the empirical  weighted  mean and
covariance matrix of the states and actions  of all players as follows:
\begin{align}\label{eq:cost-mfs}
c^i_t(\mathbf x_t, \mathbf u_t)&= ({x^i_t})^\intercal Q_t x^i_t+ 2({x^i_t})^\intercal S^x_t \bar x_t + ({\bar x_t})^\intercal \bar Q_t \bar x_t  \nonumber \\
& +({u^i_t})^\intercal R_t u^i_t+2 ({u^i_t})^\intercal S^u_t \bar u_t + ({\bar u_t})^\intercal \bar R_t \bar u_t \nonumber \\
&+ \sum_{j=1}^n  \alpha^j_n \big( ({x^j_t})^\intercal G^{x}_t x^j_t +  \sum_{j=1}^n ({u^j_t})^\intercal G^{u}_t u^j_t\big),
\end{align}
where  matrices  $Q_t,R_t,S^x_t,S^u_t,\bar Q_t$, $\bar R_t,G^x_t$ and $G^u_t$  are symmetric   with  appropriate dimensions.  It is also  possible to consider  cross terms  between the states and actions in~\eqref{eq:cost-mfs}; however,  since such an extension is  straightforward, it is not considered  here for simplicity of presentation.

\begin{Remark}
\emph{The  dimensions of the matrices defined above are independent of the number of players $n$; however,  their values can be population-size-dependent, e.g. $\bar A_t=n^2 \mathbf I_{d_x\times d_x}$. In addition, the initial states and driving noises can be arbitrarily correlated across players (not necessarily independent).
} 
\end{Remark}
Below, we define three types of weights.
\begin{Definition}
The weights may belong to the following types.
\begin{itemize}
\item Positive weights. This is when  $\alpha^i_n>0, \forall i  \in \mathbb{N}_n$.
\item  Homogeneous weights. This is when the weights are positive and equal, i.e. $\alpha^i_n=1/n, \forall i  \in \mathbb{N}_n$.
\item Asymptotically vanishing weights. This is when there exists  a sufficiently large $n_0 \in \mathbb{N}$ such that for every $n_0<n$,  $\alpha^i_n=\beta^i/n$, $\beta^i \in [-\beta_{\text{max}},\beta_{\text{max}}]$, $\beta_{\text{max}}>0$.\footnote{This can be generalized to any  $\alpha^i_n$ such that $\lim_{n \rightarrow \infty} \alpha^i_n=0$.}
\end{itemize}
\end{Definition}
 Let $\mathcal{A}_n:=\{\alpha^i_n, \forall i \in \mathbb{N}_n\}$ denote the set of all feasible weights. It is observed that  for homogeneous weights, $\mathcal{A}_n=\{\frac{1}{n}\}$ is a singleton, irrespective of the number of players $n$. In addition,  if all weights are asymptotically vanishing weights (including homogeneous weights), $\mathcal{A}_\infty=\{0\}$ is a singleton, irrespective of $\beta^i$, $i \in \mathbb{N}$.  In this paper,  we mainly focus on  the homogeneous  and asymptotically vanishing weights, except the common cost that allows for arbitrary positive weights.

For  the infinite-population limit, the  following standard assumption will be  imposed on the model accordingly.
 \begin{Assumption}[Population-size-independent model]\label{assump:independent_n}
All  matrices in the dynamics~\eqref{eq:dynamics-mf} and per-step cost function~\eqref{eq:cost-mfs}  are independent of the number of players $n$. 
\end{Assumption}
\begin{Remark}
\emph{The main results of this paper pertained to  the asymptotically vanishing weights hold under Assumption~\ref{assump:independent_n} for a sufficiently  large number of players.  However, those of   homogeneous  and positive weights   hold   for arbitrary number of players (not necessarily large) with  possibly  population-size-dependent   models.}
\end{Remark}
For simplicity of analysis, we restrict attention to a single linear regression of states and a single population  in this article. However,  main results  can  naturally be extended to a setup with multiple  linear regressions and  heterogeneous  sub-populations, where  the interaction between the players is modelled by a set of  deep states and deep actions. See~\cite{Jalal2019risk}  for an analogous model and Section~\ref{sec:generalization} for a brief discussion.  

\subsection{Exchangeable players}
In this section,  it is shown that any LQ game with exchangeable players is  equivalent to an LQ deep structured  game,  where all  players are equally important, i.e., they have  homogeneous weights   $\alpha^i_n=1/n$, $\forall i \in \mathbb{N}_n$. To this end, we  consider a general LQ game in the following augmented form:
\begin{equation}\label{eq:dynamics-general}
\mathbf{x}_{t+1}=\mathbf A_t  \mathbf x_t+\mathbf B_t \mathbf u_t+ \mathbf{w}_t,
\end{equation} 
\begin{equation}\label{eq:cost-general}
\mathbf c_t(\mathbf x_t,\mathbf u_t)=\VEC(c^1_t(\mathbf x_t, \mathbf u_t),\ldots,c^n_t(\mathbf x_t, \mathbf u_t)),
\end{equation}
where  the  per-step cost function of player $i$  is given by:
\begin{equation}
c^i_t(\mathbf x_t, \mathbf u_t)= {\mathbf x_t}^\intercal Q^i_t {\mathbf x_t} + {\mathbf u_t}^\intercal R^i_t {\mathbf u_t}.
\end{equation}
\begin{Definition}[Exchangeable players]\label{def:exchangeable-players}
The players  are called exchangeable if their dynamics and cost functions  are invariant to the way the players are indexed. In particular,  for every pair $i,j \in \mathbb{N}_n$, the following holds:
\begin{itemize}
\item $
\sigma_{i,j} (\mathbf{x}_{t+1})=\mathbf A_t (\sigma_{i,j} (\mathbf x_t))+\mathbf B_t (\sigma_{i,j} (\mathbf u_t))+ \sigma_{i,j} (\mathbf{w}_t),
$
\item $       \sigma_{i,j}(\mathbf c_t(\mathbf x_t,\mathbf u_t))=\mathbf c_t(\sigma_{i,j}(\mathbf x_t),\sigma_{i,j}(\mathbf u_t))$,
\end{itemize}
where $\sigma_{i,j}$ swaps $i$-th and $j$-th components.
 \end{Definition}
 \begin{Proposition}\label{proposition:exchangeable}
Let  the dynamics~\eqref{eq:dynamics-general} and cost functions~\eqref{eq:cost-general} be exchangeable. There exist matrices $(A_t,B_t,\bar A_t,\bar B_t)$ such that the dynamics of player $i \in \mathbb{N}_n$ can be expressed by~\eqref{eq:dynamics-mf}, where  $\alpha^i_n=1/n$, $\forall i \in \mathbb{N}_n$.   In addition, there exist matrices $Q_t,R_t,S^x_t,S^u_t,\bar Q_t$, $\bar R_t,G^x_t$ and $G^u_t$  such that the cost  function of player $i$   can be written as~\eqref{eq:cost-mfs}, where  $\alpha^i_n=1/n$, $\forall i \in \mathbb{N}_n$.
\end{Proposition}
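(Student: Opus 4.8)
The plan is to translate exchangeability into algebraic permutation-symmetry conditions on the block matrices $\mathbf A_t,\mathbf B_t,Q^i_t,R^i_t$, and then read off the deep-structured matrices by matching the coefficients of $x^i_t$, $\bar x_t$, and the empirical averages. Throughout, let $P_\sigma$ denote the block-permutation matrix associated with a transposition $\sigma=\sigma_{i,j}$, so that $\sigma(\mathbf x_t)=P_\sigma\mathbf x_t$, etc.

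First, for the dynamics, I would substitute~\eqref{eq:dynamics-general} into the first bullet of Definition~\ref{def:exchangeable-players} and match terms; since the identity holds for every realization of $(\mathbf x_t,\mathbf u_t,\mathbf w_t)$ and every transposition, this yields $\mathbf A_t P_\sigma=P_\sigma\mathbf A_t$ and $\mathbf B_t P_\sigma=P_\sigma\mathbf B_t$ for all $\sigma$. A block matrix commuting with every block permutation has constant diagonal blocks and constant off-diagonal blocks; writing $\mathbf A_t^{i,i}=D$ and $\mathbf A_t^{i,j}=E$ for $i\neq j$, the $i$-th block row gives $\sum_j\mathbf A_t^{i,j}x^j_t=(D-E)x^i_t+E\sum_j x^j_t=(D-E)x^i_t+nE\bar x_t$. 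Setting $A_t:=D-E$ and $\bar A_t:=nE$ reproduces the first two terms of~\eqref{eq:dynamics-mf}, and the identical argument on $\mathbf B_t$ produces $B_t,\bar B_t$.

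Second, for the cost, the key observation is that reading the second bullet of Definition~\ref{def:exchangeable-players} componentwise gives, for every index $k\notin\{i,j\}$, the invariance $c^k_t(\mathbf x_t,\mathbf u_t)=c^k_t(\sigma_{i,j}\mathbf x_t,\sigma_{i,j}\mathbf u_t)$; specializing to $k=1$ and $i,j\in\{2,\ldots,n\}$ shows that $c^1_t$ is invariant under every permutation fixing the first index, i.e. under the $S_{n-1}$ action on $\{2,\ldots,n\}$. For the symmetric matrix $Q^1_t$ this forces exactly four distinct block types: $(Q^1_t)^{1,1}=\Theta$, $(Q^1_t)^{1,k}=\Phi$ (with transpose) for $k\geq 2$, $(Q^1_t)^{k,k}=\Psi$ for $k\geq 2$, and $(Q^1_t)^{k,l}=\Xi$ for distinct $k,l\geq 2$. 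Expanding $\mathbf x_t^\intercal Q^1_t\mathbf x_t$ with these blocks, substituting $\sum_{k\geq 2}x^k_t=n\bar x_t-x^1_t$, and completing the square to turn the double sum into $(n\bar x_t-x^1_t)^\intercal\Xi(n\bar x_t-x^1_t)$ plus a diagonal remainder $\sum_{k\geq 1}(x^k_t)^\intercal(\Psi-\Xi)x^k_t-(x^1_t)^\intercal(\Psi-\Xi)x^1_t$, I would then collect the coefficients of $(x^1_t)^\intercal(\cdot)x^1_t$, of $(x^1_t)^\intercal(\cdot)\bar x_t$, of $\bar x_t^\intercal(\cdot)\bar x_t$, and of $\sum_k(x^k_t)^\intercal(\cdot)x^k_t=n\sum_k\alpha^k_n(x^k_t)^\intercal(\cdot)x^k_t$, obtaining matrices $Q_t,S^x_t,\bar Q_t,G^x_t$ as in~\eqref{eq:cost-mfs} (e.g. $G^x_t=n(\Psi-\Xi)$). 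The analogous computation on $R^1_t$ yields $R_t,S^u_t,\bar R_t,G^u_t$. Finally, the cases $k=i$ and $k=j$ of the same componentwise reading show that $c^i_t$ is the $\sigma_{1,i}$-image of $c^1_t$, so it admits the identical deep-structured representation with $x^1_t$ replaced by $x^i_t$; hence a single set of matrices serves all players.

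I expect the main obstacle to be the cost computation, specifically proving rigorously that $S_{n-1}$-invariance forces precisely the four-block structure of $Q^1_t$ (a fixed-point-space statement for the permutation action, requiring $n\geq 3$ to separate $\Psi$ from $\Xi$, with $n=2$ treated separately since the $\Xi$ term is then vacuous) and then carrying out the bookkeeping that merges the per-player quadratic terms into the single weighted sum $\sum_j\alpha^j_n(\cdot)$ with the correct (generally $n$-dependent) matrix $G^x_t$. The dynamics part is comparatively routine once the commutation identities $\mathbf A_t P_\sigma=P_\sigma\mathbf A_t$ and $\mathbf B_t P_\sigma=P_\sigma\mathbf B_t$ are established.
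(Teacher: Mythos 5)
Your proposal is correct and follows essentially the same route as the paper's proof: derive the block-constancy of $\mathbf A_t,\mathbf B_t$ from invariance under index swaps (the paper does this by direct comparison of the expanded dynamics rather than via commutation with $P_\sigma$, but the content is identical), then use invariance of $c^i_t$ under permutations of the other $n-1$ players to force the same four-block structure on $Q^i_t$ that you describe, substitute $\sum_{k}x^k_t=n\bar x_t$, and collect terms into $Q_t,S^x_t,\bar Q_t,G^x_t$ with the final swap $\sigma_{i,j}$ showing the matrices are player-independent. Your remark about treating $n=2$ separately (where the off-diagonal block $\Xi$ is vacuous) is a legitimate edge case that the paper glosses over, but it does not change the argument.
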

\begin{proof} 
The proof is presented in~Appendix~\ref{sec:proof_proposition:exchangeable}. 
\end{proof}
Proposition~\ref{proposition:exchangeable} holds irrespective of the information structure of players.  It is  worth highlighting that any exchangeable  function with Lipschitz bounds  can be equivalently expressed as a function of empirical distribution~\cite{carmona2018probabilistic}.  For more  details on  exchangeable functions, see~\cite[Chapter 2]{arabneydi2016new}.
\begin{Remark}[Mean-field models]
\emph{It is to be noted that cost function~\eqref{eq:cost-mfs} is the most general representation of  a generic LQ game  with exchangeable players, where weights are homogeneous. This generalization leads to a unified framework for numerous cases of exchangeable models such  as  mean-field games and  mean-field-type  games, which have been studied separately within the context of  LQ games. Also, we are particularly interested in the so-called extended mean-field games~\cite{gomes2016extended}, where  the effect of the aggregate action is accounted for (this type of games has applications in financial models). Importantly, we strive to give an exact and explicit  but  scalable  version of the solution   to the finite-population version  of these games.    In addition,  the covariance term in~\eqref{eq:cost-mfs} is of particular interest  in applications such as robust and quantile sensitive games, where high variance is not desirable.  
 }
\end{Remark}

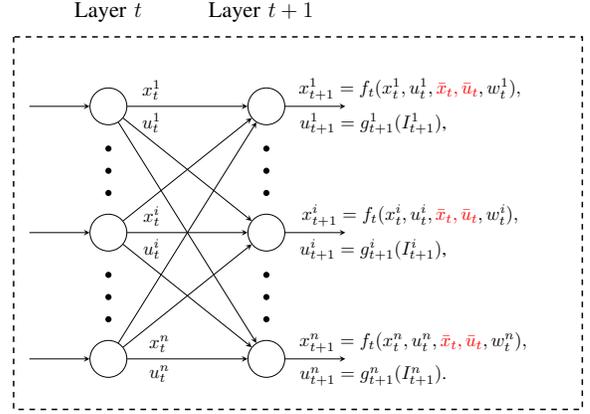
\begin{figure}[t!]
\hspace{-.5cm}
\scalebox{.7}{
\begin{tikzpicture}[x=1.5cm, y=1.2cm, >=stealth]
  \draw[font=\large](0,3cm) node {Layer $t$} ;
\draw[font=\large](2.9cm,3cm) node {Layer $t+1$} ;
\draw[black, thick, dashed] (-1.2,2.1) rectangle (6,-3.8);

\foreach \m/\l [count=\y] in {1,missing,2,missing,3}
  \node [every neuron/.try, neuron \m/.try] (input-\m) at (0,2-\y) {};

\foreach \m/\l [count=\y] in {1,missing,2,missing,3}
  \node [every neuron/.try, neuron \m/.try ] (output-\m) at (2,2-\y) {};

  \draw [<-] (input-1) -- ++(-1,0)
    node [above,midway] {\hspace{3.5cm}$x^1_t$}
        node [below,midway] {\hspace{3.5cm}$u^1_t$};
     \draw [<-] (input-2) -- ++(-1,0)
    node [above,midway] {\hspace{3.5cm}$x^i_t$}
        node [below,midway] {\hspace{3.5cm}$u^i_t$};
        
          \draw [<-] (input-3) -- ++(-1,0)
    node [above,midway] {\hspace{3.8cm}$x^{n}_t$}
        node [below,midway] {\hspace{3.8cm}$u^{n}_t$};

  \draw [->]  (output-1) -- ++(1,0)  node[above, midway]  
   {\hspace*{3.6cm}$x^1_{t+1}= f_t(x^1_t,u^1_t, \textcolor{red}{\bar{x}_t,\bar{u}_t},w^1_t),$}
     node [below,midway] {\hspace{2.2cm}$u^1_{t+1} = g^1_{t+1}( I^1_{t+1})$,};   
      \draw [->]  (output-2) -- ++(1,0)  node[above, midway]     {\hspace*{3.6cm}$x^i_{t+1}= f_t(x^i_t,u^i_t, \textcolor{red}{\bar{x}_t,\bar{u}_t},w^i_t),$}
     node [below,midway] {\hspace{2.2cm}$u^i_{t+1}= g^i_{t+1}( I^i_{t+1})$,};  
    
      \draw [->]  (output-3) -- ++(1,0)  node[above, midway]  { \hspace*{3.6cm} $x^n_{t+1}= f_t(x^n_t,u^n_t, \textcolor{red}{\bar{x}_t,\bar{u}_t},w^n_t),$}
     node [below,midway] {\hspace{2.2cm}$u^n_{t+1}= g^n_{t+1}( I^n_{t+1})$.};     
     
    \foreach \i in {1,...,3}
  \foreach \j in {1,...,3}
    \draw [->] (input-\i) -- (output-\j);    
\end{tikzpicture}}
\caption{The coupling  between the players in a deep structured model is similar to that between the neurons of a deep feed-forward neural network. In this paper,  function $f_t$ is a multivariate affine function,  and $I^i_t$ denotes the information set of player $i$ at time $t$.}
\end{figure}
\begin{figure}[t!]
\hspace{-.5cm}
\scalebox{0.7}{
\begin{tikzpicture}[x=1.5cm, y=1.2cm, >=stealth]
  \draw[font=\large](0,4cm) node {Layer $t$} ;
\draw[font=\large](2.9cm,4cm) node {Layer $t+1$} ;
\draw[black, thick, dashed] (-1.2,3) rectangle (6,-3.8);

\foreach \m/\l [count=\y] in {1,missing,2,missing,3}
  \node [every neuron/.try, neuron \m/.try] (input-\m) at (0,2-\y) {};

\foreach \m/\l [count=\y] in {1,missing,2,missing,3}
  \node [every neuron/.try, neuron \m/.try ] (output-\m) at (2,2-\y) {};

\draw[->, color=black, line width=.5mm] (-1,2.1) -> (5.5,2.1) node [label=Weighted mean-field trajectory $\textcolor{red}{m^x_{1:T}}$ and $\textcolor{red}{m^u_{1:T}}$] at (3,2.1) {};

\draw[->, color=black, line width=.5mm] (0,2.1) -> (0,1.3);
\draw[->, color=black, line width=.5mm] (2,2.1) -> (2,1.3);

\foreach \m/\l [count=\y] in {1,missing,2,missing,3}
  \node [every neuron/.try, neuron \m/.try] (input-\m) at (0,2-\y) {};

\foreach \m/\l [count=\y] in {1,missing,2,missing,3}
  \node [every neuron/.try, neuron \m/.try ] (output-\m) at (2,2-\y) {};

  \draw [<-] (input-1) -- ++(-1,0)
    node [above,midway] {\hspace{3.5cm}$x^1_t$}
        node [below,midway] {\hspace{3.5cm}$u^1_t$};
     \draw [<-] (input-2) -- ++(-1,0)
    node [above,midway] {\hspace{3.5cm}$x^i_t$}
        node [below,midway] {\hspace{3.5cm}$u^i_t$};
        
          \draw [<-] (input-3) -- ++(-1,0)
    node [above,midway] {\hspace{3.8cm}$x^{n}_t$}
        node [below,midway] {\hspace{3.8cm}$u^{n}_t$};

  \draw [->]  (output-1) -- ++(1,0)  node[above, midway]  
   {\hspace*{3.6cm}$x^1_{t+1}= f_t(x^1_t,u^1_t, \textcolor{red}{m^x_t,m^u_t},w^1_t),$}
     node [below,midway] {\hspace{2.2cm}$u^1_{t+1} = g^1_{t+1}(x^1_{t+1})$,};   
      \draw [->]  (output-2) -- ++(1,0)  node[above, midway]     {\hspace*{3.6cm}$x^i_{t+1}= f_t(x^i_t,u^i_t, \textcolor{red}{m^x_t,m^u_t},w^i_t),$}
     node [below,midway] {\hspace{2.2cm}$u^i_{t+1}= g^i_{t+1}(x^i_{t+1})$,};  
    
      \draw [->]  (output-3) -- ++(1,0)  node[above, midway]  { \hspace*{3.6cm} $x^n_{t+1}= f_t(x^n_t,u^n_t, \textcolor{red}{m^x_t,m^u_t},w^n_t),$}
     node [below,midway] {\hspace{2.2cm}$u^n_{t+1}= g^n_{t+1}(x^n_{t+1})$.};    
    \foreach \i in {1,...,3}
    \draw [->] (input-\i) -- (output-\i);    
\end{tikzpicture}}
    \caption{
    Due to the asymptotically vanishing effect of individuals, one can express the infinite-population game as  a two-player game between  a generic player and an infinite-population player. When local noises are independent,  the dynamics of the deep state (i.e., the state of the infinite-population player) becomes deterministic; hence,  the deep state reduces to an external predictable trajectory  called (weighted) mean-field.}
    \end{figure}
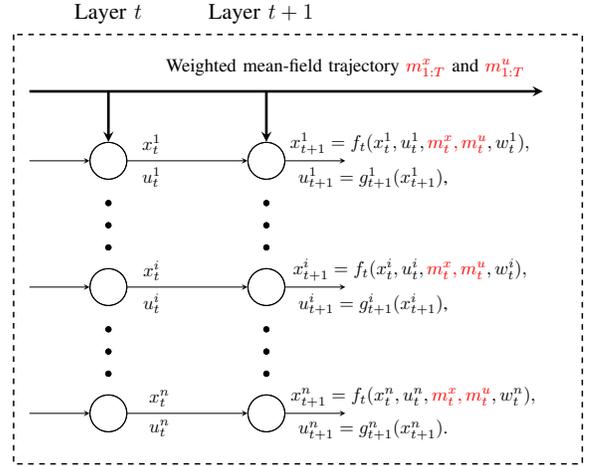  
\subsection{Information structure}
In this paper, we consider three information structures. The first one is called \emph{Perfect Sharing} (PS),  where every player   has access to the stacked  state, i.e., the control action of player $i \in \mathbb{N}_n$ at time $t \in \mathbb{N}_T$ is given by: 
\begin{equation}
 u^i_t=g^i_t(\mathbf x_{1:t}), \tag{\text{PS} }
\end{equation}
where $g^i_t: \mathbb{R}^{ntd_x}   \rightarrow \mathbb{R}^{d_u}$  is a measurable function adapted to the $\sigma$-algebra generated by $\{\mathbf x_1, \mathbf{w}_1, \ldots, \mathbf w_t\}$. The second information structure is  called \emph{Deep State Sharing} (DSS),  and  requires  that every  player observes  its  local state as well as  the deep state, i.e., the control action of player $i \in \mathbb{N}_n$ at time $t \in \mathbb{N}_T$ is  described by:
\begin{equation}
 u^i_t=g^i_t(x^i_{1:t}, \bar x_{1:t}), \tag{\text{DSS} }
\end{equation}
where $g^i_t: \mathbb{R}^{2td_x}   \rightarrow \mathbb{R}^{d_u}$ is a measurable function adapted to the $\sigma$-algebra generated by $\{x^i_1,\bar x_1, w^i_1,\bar w_1,\ldots, w^i_T,\bar w_T\}$.   In practice, there are various applications in which DSS  is plausible.  For example,  it is a common practice  in  the stock markets to provide players (e.g., buyers, sellers and brokers) with statistical data  on the total amount of  shares, trades and  exchanges   It is also possible to communicate  the deep state, without a central authority,  by means of consensus algorithms.
The third information structure  is called \emph{No Sharing} (NS), where each player knows nothing more than  its local state:
\begin{equation}
u^i_t=g^i_t(x^i_{1:t}),  \tag{\text{NS}}
\end{equation}
where  $g^i_t: \mathbb{R}^{td_x} \rightarrow \mathbb{R}^{d_u}$ is a measurable function adapted to the $\sigma$-algebra generated by $\{x^i_1,w^i_1,\ldots,w^i_T\}$.  When  the number of players is very large and  sharing the deep state is infeasible,  NS information structure is more  practical as  it  requires no   communication among players, except at the beginning of the control horizon. In general,  DSS and NS information structures are appropriate  for    cloud-based  and  cyber-security applications  wherein players are concerned with sharing their private states.

The admissible set of control actions is assumed to be  square summable, i.e.,  given any $\gamma \in (0,1)$, $\Exp{\sum_{t=1}^T (u^i_t)^\intercal u^i_t} < \infty$ and  $\Exp{\lim_{T \rightarrow \infty} (1-\gamma) \sum_{t=1}^T \gamma^{t-1} (u^i_t)^\intercal u^i_t} <  \infty$, $\forall i \in \mathbb{N}_n$. In the sequel,  we refer to  $\mathbf g^i_n:=g^i_{1:T}$  as the \emph{strategy of player} $i \in \mathbb{N}_n$ and to  $\mathbf{g}_n:= \{g^1_{1:T}, \ldots, g^n_{1:T}\}$  as the \emph{strategy of  the game}. \edit{In addition, we denote  by $\mathcal{G}^i$  the set of strategies  of player $i \in \mathbb{N}_n$, and by   $\GC$, $\GMFS$ and $\GNS$   the set of PS, DSS and NS strategies, respectively.}

\subsection{Solution concepts}
Define the following  cost-to-go function  for   player $i\in \mathbb{N}_n$:
\begin{equation}\label{eq:total_cost}
J^i_n(\mathbf g^i_n, \mathbf  g^{-i}_n)_{t_0}:=\mathbb{E}^{\mathbf g_n}\big[\sum_{t=t_0}^T  c^i_t(\mathbf x_t, \mathbf u_t) \big], \quad t_0 \in \mathbb{N}_T,
\end{equation}
where the above expectation is taken with respect to the probability measures  induced by the choice of  strategy $\mathbf g_n$.
\edit{For PS  information structure, we  consider a standard sequential solution concept introduced in~\cite{selten1975reexamination} as follows.
\begin{Definition}[Subgame Perfect Nash Equilibrium (SPNE)]\label{def:DSS}
Any strategy $\mathbf g^\ast_n$ with perfect  sharing information structure is said to be a subgame perfect Nash equilibrium if for any player $i \in \mathbb{N}_n$ at any stage of the game $t_0\in \mathbb{N}_T$:
\begin{equation}
{J^i_n(\mathbf g^{\ast,i}_n,\mathbf g^{\ast,-i}_n)}_{t_0} \leq {J^i_n(\mathbf g^{ i},\mathbf g^{\ast,-i}_n)}_{t_0}, \quad \forall \mathbf g^i \in \GC.
\end{equation}
\end{Definition}
For no-sharing information structure,  we  present two \emph{approximate} solution concepts. The first one is motivated from the role of reciprocation  in game theory and  introduced in~\cite{Jalal2020Nash}. 
\begin{Definition}[Sequential Asymptotic  Reciprocal Equilibrium (SARE)]\label{def:pDSS}
Any strategy $\hat{\mathbf g}$ with imperfect information structure   is said to be  a  sequential asymptotic reciprocal equilibrium  if at any stage of the game $t_0 \in \mathbb{N}_T$, the performance loss of  every player  $i \in \mathbb{N}_n$  compared to that of the SPNE  is  less than a threshold $ \varepsilon_{t_0}(n) \geq 0$,  where  $\lim_{n \rightarrow \infty}\varepsilon_{t_0}(n)=0$. In particular, there exists $n_0 \in \mathbb{N}$ such that
\begin{equation}
| {J^i_n(\hat{\mathbf g}^i,\hat{\mathbf g}^{-i})}_{t_0} - {J^i_n(\mathbf g^{i,\ast}_n,\mathbf g^{\ast,-i}_n)}_{t_0}| \leq  \varepsilon_{t_0}(n), \quad  n_0 \leq \forall n.
\end{equation}
\end{Definition} 
The second one   is defined based on the notion of individual rationality and $\varepsilon$-Nash equilibrium in game theory.
\begin{Definition}[Sequential Asymptotic Nash Equilibrium (SANE)]\label{def:pDSS2}
Any strategy $\hat{\mathbf g}$ with imperfect  information structure is said to be a sequential  asymptotic Nash  equilibrium if   the benefit of every player $i \in \mathbb{N}_n$  from any  unilateral deviation at any stage of the game $t_0 \in \mathbb{N}_T$ does not exceed a threshold  $\bar \varepsilon_{t_0}(n)$,  where $\lim_{n \rightarrow \infty} \bar \varepsilon_{t_0}(n)=0$. In particular, there exists $n_0 \in \mathbb{N}$ such that for any $\mathbf g^i \in \mathcal{G}^i$:
\begin{equation}
{J^i_n(\hat{\mathbf g}^i,\hat{\mathbf g}^{-i})}_{t_0} \leq {J^i_n(\mathbf g^i,\hat{\mathbf g}^{-i})}_{t_0} + \bar  \varepsilon_{t_0}(n), \quad  n_0 \leq \forall n.
\end{equation}
\end{Definition}
 In general, SARE and SANE are conceptually two different solution concepts, and  they  are not  unique even if the SPNE is unique. In particular,  the existence of SARE implies the existence of  the finite-population SPNE solution $\mathbf g^\ast_n$ while the existence  of SANE is independent of the  finite-population  solution. This independence comes at a  price  that  SANE  does not provide much insight about  the finite-population  SPNE solution, which  can generally lead to counter-intuitive cases wherein the infinite-population equilibrium is SANE but it is not the limit of the finite-population solution.   In this paper, we introduce two NS approximate strategies that are both  SARE and SANE under mild assumptions. The first one is called   \emph{sequential asymptotic population-size-dependent equilibrium} (SAPDE) and the second one is called \emph{sequential weighted mean-field equilibrium} (SWMFE).   
 Note  that SWMFE is different from FBMFE (forward-backward mean-field equilibrium), in general.  For more information on  the relations between SARE, SANE, FBMFE, and SWMFE, the reader is referred to~\cite{Jalal2020Nash}. 
}

  We now formulate the following three problems.
\begin{Problem}\label{prob:aux}
If it exists, find a subgame perfect Nash equilibrium under perfect sharing information structure.
\end{Problem}
\begin{Problem}\label{prob:mfs}
If it exists, find a sequential  Nash strategy $\mathbf{g}^\ast_n $ under deep state sharing information structure  such that for any player $i \in \mathbb{N}_n$ at any stage of the game $t_0\in \mathbb{N}_T$:
\begin{equation}
{J^i_n(\mathbf g^{\ast,i}_n,\mathbf g^{\ast,-i}_n)}_{t_0} \leq {J^i_n(\mathbf g^{ i},\mathbf g^{\ast,-i}_n)}_{t_0}, \quad \forall \mathbf g^i \in \GC.
\end{equation}
\end{Problem}
\begin{Problem}\label{prob:ns}
If they  exist, find  sequential asymptotic reciprocal and sequential asymptotic  Nash equilibria under no-sharing information structure.
\end{Problem}

\subsection{Main  challenges and contributions}
To solve Problem~\ref{prob:aux}, one may  use a formulation based on the standard backward induction,  leading to an  investigation of  the  solution of $n$ coupled matrix Riccati  equations~\cite{Basarbook},  which  is  a difficult task in general.  In addition,   if a solution exists, its numerical computation suffers from  the curse of dimensionality, as  matrices in  Problem~\ref{prob:aux} are fully dense. 
  To solve Problem~\ref{prob:mfs} for a finite number of players,  neither standard nor LQ mean-field games~\cite{huang2007large} can be directly employed.  This is because the former generally  requires some additional information which is normally  not available, while the latter becomes a viable alternative only for a sufficiently  large number of players. To solve Problem~\ref{prob:ns}, on the other hand,  one can  use  an  LQ mean-field  game approach~\cite{huang2007large}  but  because of the considered objective functions, the problem  is not necessarily reducible  to the classical  mean-field game with a tracking cost function, in particular  because of  empirical actions in the cost function of players.  In addition, similarly to~\cite{huang2003individual,huang2007large},   the  mean-field game approach  tends to overshadow the role  of the number of players  in the solution by sending directly the number of players to infinity and then  evaluating the quality of the approximation.

The main contributions of this paper are outlined below.  
\begin{enumerate}
\item  We propose a systematic approach to solve LQ games with an arbitrary number of exchangeable players by introducing  a gauge transformation. More precisely,  we identify  a few  sufficient conditions  for the existence of a unique subgame perfect  Nash equilibrium  for Problems~\ref{prob:aux} and~\ref{prob:mfs}, where the effect of an individual player on others  is not necessarily negligible.   In addition,  we  obtain the  equilibrium  in an explicit scalable  form, in particular, one where the complexity of calculations is independent of the number of players (Theorem~\ref{thm:aux} and Corollary~\ref{thm:mfs}). The above-mentioned condition along with  the subgame perfect Nash equilibrium  is  described by  a novel non-standard Riccati equation.  Also, the main results are  generalized to weighted averages to  better distinguish  the differences between exchangeable games, infinite-population games and  collaborative games.

\item  We define performance and rationality gaps, and propose two no-sharing approximate strategies for Problem~\ref{prob:ns} in Theorems~\ref{thm:NS-optimality} and~\ref{cor:ns-infinite};   see~Remark~\ref{remark:differences-NS} and the numerical example presented in Section~\ref{sec:numerical} for more details on the differences between the two  strategies.

\item We   extend our main results  to the discounted cost  infinite-horizon  function (Theorems~\ref{thm:mfs-inf_inf}--\ref{cor:ns-infinite-inf}). 

\item   We study two special games wherein  the proposed non-standard Riccati equation  reduces  to two  standard Riccati equations. 
In addition,  we shed some light on  the non-uniqueness feature  of the solution in deterministic models as well as in infinite-population games, and   present a unified framework to study similarities and differences between diverse formulations of  mean-field games.
For example, when cost functions are identical, we show  that  the Nash and team-optimal  solutions coincide under PS and DSS structures. This is a surprising result  because  Nash and  team-optimal solutions are not necessarily the same  with  identical cost functions, where  local  convexity (Nash solution)  is different from global convexity (team-optimal solution).

\item  To illustrate the effect of the number of players on the subgame perfect Nash equilibrium,  we present  two counterexamples wherein  the finite-population game admits  a unique solution  while the infinite-population game has no solution,  and vice versa.  For the special case  of a social cost function,  however, we prove  that   under PS and DSS information structures,  the solution is insensitive to the individual  weights  of players.   In this case,  the two  proposed no-sharing  approximate strategies  are  identical and independent of the number of players.
  
\item  We briefly discuss  a few  generalizations of the proposed  non-standard Riccati equation in Section~\ref{sec:generalization}, including multiple   linear regressions and sub-populations, which is  applicable to  non-trivial cases such as  major-minor and common-noise problems with  the advantage that  no additional complication arises.
\end{enumerate}

For ease of reference, a summary  of assumptions used in this paper is outlined in Table~\ref{table:space}.

\begin{table}[ht]
\caption{A  summary of  assumptions used in this paper.}\label{table:space}
\centering
\setlength\tabcolsep{0em}
\begin{tabular}{l} 
  \toprule
\multicolumn{1}{c}{\textbf{Finite horizon conditions}}\\
\midrule
A 1.  Population-size-independent model \\
\midrule
A 2.  Sufficient condition for a unique solution with homogeneous weights\\
\midrule
A 3.  Sufficient condition for a unique solution in  infinite-population model\\
\midrule
A 4.  Sufficient condition for a unique solution in  infinite-population model \\
\hspace{.7cm} with decoupled standard Riccati equations\\
\midrule
A 5.  Sufficient condition for a unique solution of social cost function\\
\hspace{.7cm} with positive weights and decoupled standard Riccati equations\\
\midrule
A 6. Independent random variables for NS information structure\\
\midrule
A 7.  Existence of  finite-population solution with asymp. vanishing weights\\
\midrule
A 8.  Sufficient condition for asymptotic convergence of rationality gaps\\
  \toprule
\multicolumn{1}{c}{\textbf{Additional conditions in infinite horizon}}\\
 \midrule
A 9.   The infinite-horizon limit of the non-standard Riccati equation\\
 \midrule
 A 10.  A contractive solution for A 9.\\
 \midrule
 A 11.  A subspace invariance solution for A 4. and A.9\\
  \midrule
 A 12.  A subspace invariance solution for A 5. and A.9\\
   \midrule
 A 13.  Asymptotic stability condition for NS information structure\\
  \bottomrule
\end{tabular}
\end{table}

\section{Main Results for Problems~\ref{prob:aux} and~\ref{prob:mfs}}\label{sec:mfs}

In this section, we  present   solutions for  Problems~\ref{prob:aux} and~\ref{prob:mfs}. 

\subsection{Gauge transformation and non-standard Riccati equation}
Our analysis  crucially hinges on  a change of variables whereby we replace players' states, actions  and noises by a combination of deviations from  the weighted averages of state, action and noise, along with the  weighted quantities themselves. Define the following variables for any player $i \in \mathbb{N}_n$ at time $t \in \mathbb{N}_T$:
\begin{equation}\label{eq:def-breve}
\Delta x^i_t:=x^i_t - \bar x_t,  \quad \Delta u^i_t:=u^i_t - \bar u_t,\quad \Delta w^i_t:=w^i_t - \bar w_t,
\end{equation}
where $\bar w_t:=\sum_{i=1}^n \alpha^i_n w^i_t$.  
The above change of coordinates is a \emph{gauge transformation}, which is a powerful tool in the theoretical analysis of  invariant physics~\cite{moriyasu1983elementary}. In short, a gauge transformation manipulates  the degrees of freedom    of  a physical (invariant) system without altering  its behaviour.  It is to be noted that the gauge transformation technique  is more general than the completion-of-square method introduced  in the mean-field-type game~\cite{yong2013linear,elliott2013discrete}, which is a single-agent control problem. In particular,  the  transformation technique  was initially introduced in~\cite{arabneydi2016new}, showcased in mean-field teams~\cite{JalalCDC2015} and weighted mean-field teams~\cite{Jalal2017linear},  and   has recently been  extended  to  deep structured teams in~\cite{Jalal2019risk,Jalal2020CCTA}.  The present paper is the first work investigating  the efficiency of this technique  in  a nonzero-sum game, with both collaborative  and non-collaborative costs. Note also that the nonzero-sum game is
 a more  complicated problem than  the above control problems (as briefly mentioned in Section~\ref{sec:introduction}).

 For PS information structure, knowing $\mathbf x_t$ and $\mathbf u_t$  is equivalent to knowing $\tilde{\mathbf x}_t:=\VEC(\Delta x^1_t,\ldots,\Delta{x}^n_t,\bar x_t)$ and $\tilde{\mathbf u}_t:=\VEC(\Delta u^1_t,\ldots,\Delta u^n_t,\bar u_t)$,  respectively. 
From~\eqref{eq:def:mean-field},~\eqref{eq:dynamics-mf} and~\eqref{eq:def-breve},  it follows that for every  $i \in \mathbb{N}_n$:
\begin{equation}\label{eq:dynamics_joint}
\begin{bmatrix}
\Delta x^i_{t+1} \\
\bar{x}_{t+1}
\end{bmatrix}=\mathbf A_t\begin{bmatrix}
\Delta x^i_t \\
\bar{x}_t
\end{bmatrix}+ \mathbf B_t\begin{bmatrix}
\Delta u^i_t \\
\bar u_t \\
\end{bmatrix} + \begin{bmatrix}
\Delta w^i_t \\
\bar w_t \\
\end{bmatrix},
\end{equation}
where $
\mathbf  A_t:=\DIAG(A_t, A_t+\bar A_t)$ and   $\mathbf  B_t:=\DIAG(B_t,B_t+\bar B_t)$.
The gauge transformation introduces the  orthogonal relations:
\begin{equation}\label{eq:orthogonal_relation}
 \sum_{i=1}^n \alpha^i_n(\Delta{x}^i_t)^\intercal G^x_t \bar x_t=0, \quad \sum_{i=1}^n \alpha^i_n (\Delta{u}^i_t)^\intercal G^u_t \bar u_t=0.
\end{equation} 
From~\eqref{eq:orthogonal_relation}, the per-step cost of  player $i \in \mathbb{N}_n$ at time $t \in \mathbb{N}_T$, given by~\eqref{eq:cost-mfs}, can be expressed as follows:
\begin{align}\label{eq:cost-breve-bar} 
&c^i_t(\tilde{\mathbf x}_t,\tilde{ \mathbf u}_t):=\begin{bmatrix}
\Delta x^i_t \\
\bar x_t \\
\end{bmatrix}^\intercal  \mathbf Q_t^{\alpha^i_n} \begin{bmatrix}
\Delta x^i_t \\
\bar x_t \\
\end{bmatrix}+  \begin{bmatrix}
\Delta u^i_t \\
\bar u_t \\
\end{bmatrix}^\intercal  \mathbf R_t^{\alpha^i_n}  \begin{bmatrix}
\Delta u^i_t \\
\bar u_t \\
\end{bmatrix} \nonumber \\
&\quad+ \sum_{j=1}^n \alpha^j_n \big((\Delta x^j_t)^\intercal G^x_t (\Delta x^j_t) + (\Delta u^j_t)^\intercal G^u_t (\Delta u^j_t)\big) \nonumber \\
&\quad -\frac{\alpha^i_n}{1-\alpha^i_n} \big((\Delta x^i_t)^\intercal G^x_t (\Delta x^i_t)+(\Delta u^i_t)^\intercal G^u_t (\Delta u^i_t)\big),\\
\end{align}  
where 
\begin{align}\label{eq:def-barQ}
\mathbf Q_t^{\alpha^i_n}&:=\Big[ \begin{array}{cc}
Q_t +\frac{\alpha^i_n}{1-\alpha^i_n} G^x_t&  Q_t+S^x_t\\
Q_t+S^x_t  & Q_t+2S^x_t+\bar Q_t+G^x_t
\end{array}\Big], \nonumber \\
\mathbf R_t^{\alpha^i_n}&:=\Big[ \begin{array}{cc}
R_t +\frac{\alpha^i_n}{1-\alpha^i_n} G^u_t&  R_t+S^u_t\\
R_t+S^u_t  & R_t+2S^u_t+\bar R_t+G^u_t
\end{array}\Big].
\end{align}

In the sequel,  we occasionally
 use superscript $n$ and $\alpha$ to highlight  the dependence of some parameters with respect to  the number of players and  the individual  weight (which itself depends on the number of players), respectively.  We define a non-standard Riccati equation which will help formulate the  solution.  For any $t \in \mathbb{N}_{T}$ and $\alpha \in \mathcal{A}_n$, define matrices  $\mathbf P^\alpha_{1:T}$  backward in time as follows: 
    \begin{equation}\label{eq:riccati-bar-m}
    \begin{cases}
  \barM= \mathbf  Q_t^\alpha+ \mathbf A_t^\intercal \mathbf P^\alpha_{t+1} \mathbf A_t+    (\boldsymbol \theta_t^\alpha)^\intercal \mathbf B_t^\intercal \mathbf P^\alpha_{t+1} \mathbf A_t\\
  \qquad +\mathbf A_t^\intercal \mathbf P^\alpha_{t+1} \mathbf B_t \boldsymbol \theta^\alpha_t + (\boldsymbol \theta^\alpha_t)^\intercal (\mathbf R^\alpha_t+ \mathbf B^\intercal_t \mathbf  P^\alpha_{t+1} \mathbf B_t)\boldsymbol \theta^\alpha_t,\\
  \mathbf P^\alpha_{T+1}=\mathbf 0_{2 d_x \times 2 d_x},
  \end{cases}
  \end{equation} 
  where   $\boldsymbol \theta^\alpha_t=:\DIAG(\theta_t(\alpha), \bar \theta_t(\alpha))$,  $\theta_t(\alpha)$ and  $ \bar \theta_t(\alpha)$ are defined below: 
\begin{align}\label{eq:breve-f}
&  \theta_t(\alpha):= (\DeltaF)^{-1} \DeltaK, \quad  \bar \theta^\alpha_t:=(\barF)^{-1} \barK, \nonumber \\
&\DeltaF:= (1-\alpha)\Big[R_t+ \frac{\alpha}{1-\alpha}G^u_t +B_t^\intercal {{}\mathbf P_{t+1}^{\alpha}}^{\hspace{-.2cm} 1,1} B_t  \Big] \nonumber  \\
&\quad + \alpha \Big[R_t + S^u_t +(B_t+\bar B_t)^\intercal {{}\mathbf P_{t+1}^\alpha}^{\hspace{-.2cm}1,2} B_t \Big], \nonumber \\
&\barF:= (1-\alpha)\left[R_t+ S^u_t +B_t^\intercal {{}\mathbf P_{t+1}^\alpha}^{\hspace{-.2cm} 2,1} (B_t+\bar B_t)  \right]  \nonumber \\
 &+ \alpha \left[R_t + 2S^u_t + \bar R_t+ G^u_t+(B_t+\bar B_t)^\intercal {{}\mathbf P_{t+1}^\alpha}^{ \hspace{-.2cm}2,2} (B_t + \bar B_t)  \right], \nonumber \\
&\DeltaK:= -(1-\alpha)\Big[B_t^\intercal {{}\mathbf P_{t+1}^\alpha}^{\hspace{-.2cm }1,1} A_t  \Big] - \alpha \Big[(B_t + \bar B_t)^\intercal {{}\mathbf P_{t+1}^\alpha}^{\hspace{-.2cm} 1,2} A_t   \Big], \nonumber \\
&\barK:= -(1-\alpha)\left[B_t^\intercal {{}\mathbf P_{t+1}^\alpha}^{\hspace{-.2cm} 2,1} (A_t+\bar A_t)  \right] \nonumber  \\ 
&\hspace{.9cm}- \alpha \left[(B_t + \bar B_t)^\intercal {{}\mathbf P_{t+1}^\alpha}^{ \hspace{-.2cm} 2,2} (A_t+\bar A_t)   \right].
\end{align}
Also, define matrices  $P^\alpha_{1:T}$  backward in time as follows: 
\begin{equation}\label{eq:riccati-breve-m} 
\begin{cases}
\DeltaM:=G^x_t+  A_t^\intercal P^\alpha_{t+1}A_t + (\theta_t^\alpha)^\intercal B_t^\intercal P^\alpha_{t+1} A_t \\
\qquad +A_t^\intercal  P^\alpha_{t+1} B_t \theta^\alpha_t+ (\theta_t^\alpha)^\intercal  (G^u_t +  B_t^\intercal P^\alpha_{t+1}  B_t ) \theta_t^\alpha,\\
P^\alpha_{T+1}=\Zero.
\end{cases}
\end{equation}
\begin{Remark}
\emph{Equations~\eqref{eq:riccati-bar-m} and~\eqref{eq:riccati-breve-m} are  symmetric, yet non-standard, Riccati equations that always admit a unique solution if matrices  $\DeltaF$ and $\barF$ are invertible, $t \in \mathbb{N}_T$.  In the scalar case, for example, this means  $\DeltaF \neq 0$ and $\barF \neq 0$.}
\end{Remark} 
\subsection{Sufficient conditions}
We now introduce  a few  sufficient conditions for the existence of a unique sub-game perfect Nash solution, which  can be easily verified by recursively using  equations~\eqref{eq:riccati-bar-m}  and~\eqref{eq:breve-f}. 
\begin{Assumption}[Homogeneous weights]\label{ass:invertible}
At any  time $t \in \mathbb{N}_T$,  matrix $(1-\frac{1}{n}) F_t(\frac{1}{n}) + \frac{1}{n} \bar F_t(\frac{1}{n})$  is  positive definite, and matrices  $F_t(\frac{1}{n})$ and $\bar F_t(\frac{1}{n})$ are invertible.
\end{Assumption}
\begin{Assumption}[Asymptotically vanishing weights]\label{ass:invertible_infinite_pop}
Let Assumption~\ref{assump:independent_n} hold. In addition,  there exist a sufficiently large $n_0 \in \mathbb{N}$ and  a constant matrix $C$ such that  $(1-\alpha) \DeltaF + \alpha \barF$  is  positive definite and   $F^{-1}_t(\alpha), \bar F^{-1}_t(\alpha) \preceq C$,   $ \forall \alpha \in [-\beta_{\text{max}}/n_0,\beta_{\text{max}}/n_0]$.
\end{Assumption}
\begin{Assumption}[Social cost with arbitrary positive weights]\label{ass:collaborative_cost}
Let $Q_t$, $S^x_t$, $R_t$,  $S^u_t$ be zero, $t \in \mathbb{N}_T$, and  weights be positive $\alpha^i_n >0$, $\forall i \in \mathbb{N}_n$. Let also $G^x_t$ and  $\bar Q_t+G^x_t$ be positive semi-definite, and $G^u_t$ and $\bar R_t + G^u_t$ be positive definite.
\end{Assumption}
\begin{Proposition}[Decoupled standard Riccati equations  for social cost] Let Assumption~\ref{ass:collaborative_cost} hold. The equation~\eqref{eq:riccati-bar-m} reduces to  two decoupled standard Riccati equations as follows:
    \begin{equation}\label{eq:Riccati_collaborative}
    \begin{cases}
  \mathbf P_t=\DIAG(G^x_t,\bar Q_t+G^x_t)
  +\mathbf A_t^\intercal \mathbf P_{t+1} \mathbf A_t -  \mathbf A_t^\intercal \mathbf P_{t+1} \mathbf B_t \\
\qquad   \times  ( \DIAG(G^u_t, \bar R_t+ G^u_t)+ \mathbf B^\intercal_t \mathbf  P_{t+1} \mathbf B_t)^{-1} \mathbf B^\intercal \mathbf P_{t+1} \mathbf A_t,\\
  \mathbf P_{T+1}=\mathbf 0_{2 d_x \times 2 d_x},
  \end{cases}
  \end{equation} 
  where  at any $t \in \mathbb{N}_T$,  $\mathbf P^{1,1}_t=P_t$ and
\begin{equation}\label{eq:optimal_collaborative_strategy}
\Compress
\DIAG(\theta_t,\bar \theta_t):=- ( \DIAG(G^u_t, \bar R_t+ G^u_t)+ \mathbf B^\intercal_t \mathbf  P_{t+1} \mathbf B_t)^{-1} \mathbf B^\intercal \mathbf P_{t+1} \mathbf A_t.
\end{equation}
\end{Proposition}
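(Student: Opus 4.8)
The plan is to specialize the non-standard Riccati equation~\eqref{eq:riccati-bar-m} and the gain formulas~\eqref{eq:breve-f} to the social-cost structure of Assumption~\ref{ass:collaborative_cost}, exhibiting an \emph{exact} block decoupling (in contrast to the asymptotic decoupling of Proposition~\ref{cor:decoupled1}, which relied on $\alpha\to0$). Setting $Q_t=S^x_t=R_t=S^u_t=0$ in~\eqref{eq:def-barQ} annihilates the off-diagonal blocks, leaving $\mathbf Q_t^{\alpha}=\DIAG(\frac{\alpha}{1-\alpha}G^x_t,\bar Q_t+G^x_t)$ and $\mathbf R_t^{\alpha}=\DIAG(\frac{\alpha}{1-\alpha}G^u_t,\bar R_t+G^u_t)$. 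Since $\mathbf A_t$ and $\mathbf B_t$ are block diagonal and the terminal value $\mathbf P^{\alpha}_{T+1}=\mathbf 0$ is block diagonal, I would first show by backward induction that $\mathbf P^{\alpha}_t$ stays block diagonal for all $t$: the $(1,2)$ block of~\eqref{eq:riccati-bar-m} vanishes because $(\mathbf Q_t^{\alpha})^{1,2}=Q_t+S^x_t=0$, $(\mathbf R_t^{\alpha})^{1,2}=R_t+S^u_t=0$, the gain $\boldsymbol\theta^{\alpha}_t$ is block diagonal, and products of block-diagonal matrices remain block diagonal. This reduces the problem to analyzing the deviation block $(1,1)$ and the mean block $(2,2)$ in isolation.

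For the mean block the decoupling is immediate. Feeding block-diagonality into~\eqref{eq:breve-f} gives $\bar F_t(\alpha)=\alpha[\bar R_t+G^u_t+(B_t+\bar B_t)^\intercal(\mathbf P^{\alpha}_{t+1})^{2,2}(B_t+\bar B_t)]$ and $\bar K_t(\alpha)=-\alpha[(B_t+\bar B_t)^\intercal(\mathbf P^{\alpha}_{t+1})^{2,2}(A_t+\bar A_t)]$, so the scalar factor $\alpha$ cancels in $\bar\theta_t(\alpha)=\bar F_t(\alpha)^{-1}\bar K_t(\alpha)$. Because $(\mathbf Q_t^{\alpha})^{2,2}=\bar Q_t+G^x_t$ and $(\mathbf R_t^{\alpha})^{2,2}=\bar R_t+G^u_t$ are already weight-free, the $(2,2)$ recursion collapses to the lower standard Riccati equation in~\eqref{eq:Riccati_collaborative}, with solution block $\bar P_t:=\mathbf P_t^{2,2}$ and gain $\bar\theta_t$ independent of $\alpha$.

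The deviation block is the crux and the step I expect to be the main obstacle, because the residual factor $\frac{\alpha}{1-\alpha}$ in $(\mathbf Q_t^{\alpha})^{1,1}$ and $(\mathbf R_t^{\alpha})^{1,1}$ does \emph{not} disappear at the level of $\mathbf P^{\alpha}_t$. Instead I would postulate and verify the scaling ansatz $(\mathbf P^{\alpha}_t)^{1,1}=\frac{\alpha}{1-\alpha}P_t$, where $P_t$ is the $(1,1)$ block of~\eqref{eq:Riccati_collaborative}. Substituting the ansatz into $F_t(\alpha)=(1-\alpha)[\frac{\alpha}{1-\alpha}G^u_t+B_t^\intercal(\mathbf P^{\alpha}_{t+1})^{1,1}B_t]$ and $K_t(\alpha)=-(1-\alpha)B_t^\intercal(\mathbf P^{\alpha}_{t+1})^{1,1}A_t$ yields $F_t(\alpha)=\alpha(G^u_t+B_t^\intercal P_{t+1}B_t)$ and $K_t(\alpha)=-\alpha B_t^\intercal P_{t+1}A_t$, so the common factor $\alpha$ cancels and $\theta_t(\alpha)=-(G^u_t+B_t^\intercal P_{t+1}B_t)^{-1}B_t^\intercal P_{t+1}A_t$ is weight-free, matching the upper block of~\eqref{eq:optimal_collaborative_strategy}. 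Plugging this weight-free gain back into the $(1,1)$ recursion of~\eqref{eq:riccati-bar-m} lets the factor $\frac{\alpha}{1-\alpha}$ be pulled out of every term, so the recursion for $P_t$ is exactly the standard Riccati equation with data $(G^x_t,G^u_t,A_t,B_t)$, closing the induction. The point to emphasize is that the Riccati block carries the weight while the feedback gain does not, which is precisely the insensitivity to individual weights asserted for the social cost.

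Finally I would verify well-posedness and upgrade to equilibrium. Under Assumption~\ref{ass:collaborative_cost}, positive definiteness of $G^u_t$ and $\bar R_t+G^u_t$ together with positive semi-definiteness of $G^x_t$ and $\bar Q_t+G^x_t$ ensure, by the standard backward induction for LQ control, that $G^u_t+B_t^\intercal P_{t+1}B_t$ and $\bar R_t+G^u_t+(B_t+\bar B_t)^\intercal\bar P_{t+1}(B_t+\bar B_t)$ are positive definite, so both Riccati equations in~\eqref{eq:Riccati_collaborative} are well posed and $F_t(\alpha),\bar F_t(\alpha)$ are invertible at every admissible weight. Since $\sum_{i=1}^n\alpha^i_n=1$ with $\alpha^i_n>0$ and $n\ge2$ forces $\alpha^i_n\in(0,1)$, the factor $\frac{\alpha}{1-\alpha}$ is finite and positive, so the derivation holds for arbitrary positive weights; invoking the main existence result (Theorem~\ref{thm:aux}) then identifies~\eqref{eq:optimal_collaborative_strategy} as the unique subgame perfect Nash equilibrium.
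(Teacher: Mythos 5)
Your proof is correct and follows essentially the same route as the paper's (which defers this proposition to case 3 of the proof of Theorem~1): establish block-diagonality of $\mathbf P^{\alpha}_t$ by backward induction under Assumption~5, then observe that the weight factor cancels in the feedback gains so that $\theta_t$ and $\bar\theta_t$ are weight-free and the two diagonal blocks satisfy decoupled standard Riccati recursions. The only difference is bookkeeping: the paper first rescales player $i$'s cost by the positive number $\frac{1-\alpha^i_n}{\alpha^i_n}$ so that every term of the best-response equation carries the common factor $(1-\alpha^i_n)$, whereas you keep the original normalization and instead make explicit the scaling ansatz $(\mathbf P^{\alpha}_t)^{1,1}=\frac{\alpha}{1-\alpha}P_t$ --- a step the paper leaves implicit --- so that the common factor in $F_t(\alpha)$, $\bar F_t(\alpha)$, $K_t(\alpha)$, $\bar K_t(\alpha)$ is $\alpha$; both cancellations are equivalent.
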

\begin{proof}
The proof is deferred to Theorem~\ref{thm:aux}. 
\end{proof}
 \subsection{Solution of Problem~\ref{prob:aux}}
\begin{Theorem}\label{thm:aux}
The following holds  for  Problem~\ref{prob:aux}.
\begin{itemize}
\item  Let Assumption~\ref{ass:invertible} hold for  homogeneous weights.  There exists a unique  sub-game  perfect Nash equilibrium for any number of   players  such that  for any player $ i \in \mathbb{N}_n$,
\begin{equation}\label{eq:thm-optimal-mfs-u_1}
u^i_t=\theta_t^n x^i_t +  ( \bar \theta^n_t  -\theta^n_t)\bar x_t,  \quad t\in \mathbb{N}_T,
\end{equation} 
where  $\theta_t^n:=\theta(\frac{1}{n})$ and $\bar \theta^n_t:=\bar \theta_t(\frac{1}{n})$ are given by~\eqref{eq:breve-f}.

\item  Let Assumption~\ref{ass:invertible_infinite_pop} hold for asymptotically vanishing weights.  As  $n \rightarrow \infty$, there exists a unique  sub-game perfect  Nash equilibrium  such that  for any player $ i \in \mathbb{N}$,
\begin{equation}\label{eq:thm-optimal-mfs-u_2}
u^i_t=\theta^{\infty}_t x^i_t +  (\bar \theta^{\infty}_t  -\theta^{\infty}_t)\bar x_t,  \quad t\in \mathbb{N}_T, 
\end{equation} 
where  $\theta^\infty_t:=\theta_t(0)$ and $\bar \theta^\infty_t:=\bar \theta_t(0)$  are given by \eqref{eq:breve-f}.

\item  Let Assumption~\ref{ass:collaborative_cost} hold.  There exists a unique  sub-game  perfect Nash equilibrium for  arbitrary number of players  and positive weights   such that  for any player $ i \in \mathbb{N}_n$ and any weight $\alpha^i_n > 0$, 
\begin{equation}\label{eq:thm-optimal-mfs-u_3}
u^i_t=\theta_t x^i_t +  ( \bar \theta_t -\theta_t)\bar x_t,  \quad t\in \mathbb{N}_T,
\end{equation} 
where  $\theta_t:=\theta_t(\alpha^i_n)$ and $\bar \theta_t:=\bar \theta_t(\alpha^i_n)$ do not depend on the weight $\alpha^i_n$, and   are given by~\eqref{eq:optimal_collaborative_strategy}. 
\end{itemize}
\end{Theorem}

\begin{proof}
The outline of the proof is as follows. We fix the strategies of all players except the $i$-th player and write down the dynamic program associated with the best-response equation of player $i \in \mathbb{N}_n$. Subsequently, we end up with $n$-coupled dynamic programs.  By backward induction, we show that the above dynamic programs admit a quadratic value function where the associated minimizer (i.e., best-response strategy) is a unique linear state-feedback strategy across all players at each time instant.  The obtained state-feedback strategy is also the unique sub-game perfect Nash equilibrium because  it has a  unique representation in  stochastic games with additive noise~\cite[Theorem 5]{basar1976uniqueness} and \cite[Proposition 2]{van1980note}.  Our proof may be viewed as  a reformulation of  the standard  $n$-coupled matrix Riccati equations, introduced by the gauge transformation~\eqref{eq:def-breve}, in order to obtain  a low-dimensional (scale-free) recursive equation, that we refer to as non-standard Riccati equation~\eqref{eq:riccati-bar-m}. A detailed proof is presented in Appendix~\ref{sec_proof_1}.
\end{proof}

\begin{Assumption}\label{ass: mean-field_decoupled}
Suppose  Assumption~\ref{assump:independent_n} holds  and $n$ is sufficiently large. Let  matrices $\bar A_t$ and $\bar B_t$ be zero. Let also matrices $Q_t$ and $Q_t+S^x_t$ be  positive semi-definite and matrices $R_t$ and $R_t+S^u_t$ be positive definite.
\end{Assumption}
For  a  special  model  described in Assumption~\ref{ass: mean-field_decoupled},  Assumption~\ref{ass:invertible_infinite_pop} holds and  the non-standard Riccati equation~\eqref{eq:riccati-bar-m} gets decomposed into   two  standard Riccati equations, as $n \rightarrow \infty$.

\begin{Proposition}[Decoupled standard Riccati equations when $n=\infty$]\label{cor:decoupled1}
Let Assumption~\ref{ass: mean-field_decoupled} hold.  The infinite-population limit of the non-standard Riccati equation~\eqref{eq:riccati-bar-m} can be expressed  as
    \begin{equation}\label{eq:Riccati_non_collaborative}
    \begin{cases}
  \mathbf P_t=\DIAG(Q_t, Q_t+S^x_t)
  +\mathbf A_t^\intercal \mathbf P_{t+1} \mathbf A_t -  \mathbf A_t^\intercal \mathbf P_{t+1} \mathbf B_t \\
\qquad   \times  ( \DIAG(R_t, R_t+S^u_t)+ \mathbf B^\intercal_t \mathbf  P_{t+1} \mathbf B_t)^{-1} \mathbf B_t^\intercal \mathbf P_{t+1} \mathbf A_t,\\
  \mathbf P_{T+1}=\mathbf 0_{2 d_x \times 2 d_x},
  \end{cases}
  \end{equation} 
  where  at any $t \in \mathbb{N}_T$,  
\begin{multline}\label{eq:optimal_non_collaborative_strategy}
\DIAG(\theta_t(0),\bar \theta_t(0)):=- ( \DIAG(R_t, R_t+S^u_t)+ \mathbf B^\intercal_t \mathbf  P_{t+1} \mathbf B_t)^{-1}\\
\times  \mathbf B_t^\intercal \mathbf P_{t+1} \mathbf A_t.
\end{multline}
\end{Proposition}
\begin{proof}
The proof is presented in Appendix~\ref{sec_proof_2}.
\end{proof}
\begin{Remark}[\textbf{Individualized cost versus collaborative cost in the infinite-population model}]
\emph{Let the weights be homogeneous. An interesting observation is that the solution of the infinite-population game  under Assumption~\ref{ass: mean-field_decoupled}, given by~\eqref{eq:optimal_non_collaborative_strategy}, is independent of  matrices $\bar Q_t, \bar R_t, G^x_t$ and $G^u_t$ (i.e., the collaborative term). On the other hand, when the cost is equal to  the collaborative term, as described in Assumption~\ref{ass:collaborative_cost}, the solution in~\eqref{eq:optimal_collaborative_strategy} depends on  matrices $\bar Q_t, \bar R_t, G^x_t$ and $G^u_t$.  This contrast  highlights the conceptual  difference of the solution  with  non-collaborative (individualized) cost and the collaborative (common) cost. In particular, when the individualized cost is not zero,  every player in the infinite-population game decides to ignore the common welfare
and   makes their decisions solely based on their individualized cost. However, when  the players have identical  cost function, they have no reason to compete, so they collaborate to minimize the common cost (resulting in a different solution). Note that the solution of the collaborative game  is equal to  that of the competitive one if $Q_t=G^x_t$, $R_t=G^u_t$, $S^x_t=\bar Q_t$, and $S^u_t=\bar R_t$, $\forall t \in \mathbb{N}$,  according to~\eqref{eq:optimal_non_collaborative_strategy} and~\eqref{eq:optimal_collaborative_strategy}.  In such a case, the  best selfish action is equal  to  the best selfless action.  
}
\end{Remark}\
\begin{Remark}
\emph{It is to be noted that the non-standard Riccati equation~\eqref{eq:riccati-bar-m} is different from the standard coupled matrix Riccati equations. In particular, the size of $n$-coupled  standard matrix Riccati equations  increases with  $n$ while that of the non-standard one in~\eqref{eq:riccati-bar-m} is independent of $n$. 
 In addition, for homogeneous weights, one  would get $n$ exchangeable coupled matrix Riccati equations due to the exchangeablity of players; however,  our  proposed Riccati equation would  not be exchangeable because $F_t(\alpha) \neq \bar F_t(\alpha)$. This difference becomes  clearer when we consider $n=2$ players with homogeneous weights. In such a case, the classical standard two coupled Riccati equations are exchangeable, i.e., $\mathbf{P}^{1,1}_t=\mathbf{P}^{2,2}_t$, but this  is not the case  for the  non-standard Riccati equation.
}
\end{Remark}

\subsection{Solution of Problem~\ref{prob:mfs}}
\begin{Corollary}\label{thm:mfs}
The solutions presented in Theorem~\ref{thm:aux} are also the solutions of  Problem~\ref{prob:mfs}.
\end{Corollary}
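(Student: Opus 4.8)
The plan is to recognize that the subgame perfect Nash equilibrium produced in Theorem~\ref{thm:aux} is, by its explicit form, already admissible under the deep state sharing information structure, so that passing from Problem~\ref{prob:aux} to Problem~\ref{prob:mfs} requires no new optimization but only a containment argument between the two strategy classes. First I would observe that in each of the three cases of Theorem~\ref{thm:aux}, the equilibrium action of player~$i$ in \eqref{eq:thm-optimal-mfs-u_1}, \eqref{eq:thm-optimal-mfs-u_2} and \eqref{eq:thm-optimal-mfs-u_3} is an affine function of the pair $(x^i_t,\bar x_t)$ alone. Consequently each $\mathbf g^{\ast,i}_n$ is adapted to the $\sigma$-algebra generated by $\{x^i_1,\bar x_1,w^i_1,\bar w_1,\ldots,w^i_T,\bar w_T\}$; that is, $\mathbf g^{\ast,i}_n \in \GMFS$, so that $\mathbf g^\ast_n$ is a bona fide strategy profile for the deep state sharing information structure.

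Next I would invoke the information-set inclusion $\GMFS \subseteq \GC$: every DSS strategy is in particular a PS strategy, since it uses a coarser information set, so a PS deviation is at least as permissive as a DSS deviation. Theorem~\ref{thm:aux} establishes the inequality $J^i_n(\mathbf g^{\ast,i}_n,\mathbf g^{\ast,-i}_n)_{t_0} \leq J^i_n(\mathbf g^{i},\mathbf g^{\ast,-i}_n)_{t_0}$ for every player $i \in \mathbb{N}_n$, every initial stage $t_0 \in \mathbb{N}_T$, and every deviation $\mathbf g^i \in \GC$. This is verbatim the defining inequality of Problem~\ref{prob:mfs}. Since $\mathbf g^\ast_n$ is itself a DSS profile and withstands deviations in the richer class $\GC \supseteq \GMFS$, it is a fortiori a solution of Problem~\ref{prob:mfs}. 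I would also note in passing that the closed-loop state process is unchanged when the equilibrium maps are reinterpreted as DSS rather than PS strategies, since they are the same measurable functions producing identical actions, leaving the value function~\eqref{eq:value-function-form} and the optimal cost~\eqref{eq:optimal_cost-form} intact.

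The subtle point, and the reason the statement is not a mere tautology, is precisely that the deviating player in Problem~\ref{prob:mfs} is allowed to exploit the full perfect-sharing history $\mathbf x_{1:t}$, yet the conclusion of Theorem~\ref{thm:aux} guarantees that the unconstrained minimizer of the dynamic program~\eqref{eq:DP-general-form}, taken over all $\mathbf x_{1:t}$-measurable actions, already collapses to a function of $(x^i_t,\bar x_t)$, as witnessed by the best-response identity~\eqref{eq:proof_aux_final}. Thus the only thing to confirm is that no information beyond the local and deep states can ever be profitably used against the equilibrium of the remaining players; this is exactly what the best-response analysis in the proof of Theorem~\ref{thm:aux} delivers, and it is the step I would expect to carry the entire weight of the corollary.
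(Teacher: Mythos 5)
Your argument is correct and follows essentially the same route as the paper: the equilibrium of Theorem~\ref{thm:aux} is a function of $(x^i_t,\bar x_t)$ only, hence implementable under DSS, and since $\GMFS \subseteq \GC$ the equilibrium inequality against all PS deviations (which is exactly the inequality required in Problem~\ref{prob:mfs}) carries over immediately. Your additional remarks on measurability and on why the deviation class $\GC$ poses no difficulty are elaborations of the same containment argument rather than a different proof.
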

 \begin{proof}
From Theorem~\ref{thm:aux}, it follows that the  solution of Problem~\ref{prob:aux} is unique and  implementable under  the DSS information structure,   implying that the solution of  Problem~\ref{prob:mfs} must coincide with that of Problem~\ref{prob:aux} because $\GMFS \subseteq \GC$. 
 \end{proof}

\begin{Remark}
\emph{Theorem~\ref{thm:aux} and Corollary~\ref{thm:mfs}  hold even if the initial states and local noises are non-exchangeable, non-Gaussian and fully correlated across players. This is an immediate consequence of the proof of Theorem~\ref{thm:aux} which shows that the sub-game perfect equilibrium does not depend on the probability distribution of the  initial states and driving noises as long as they are white (i.e., independent over time). In addition, it is to be noted that the computational complexity of the solution  does not depend on the number of players   because the dimension of the non-standard Riccati equation~\eqref{eq:riccati-bar-m} is independent of the number of players.
}
\end{Remark}
According to  the Nash equilibrium presented in  Corollary~\ref{thm:mfs},   at every stage of the game,  players  make  their decisions  based on their current local states (private information) and the deep state (public information),   irrespective of what has happened in the past, i.e.,  the players are not penalized or rewarded for their past actions.   However, this will not generally hold for NS information structure because players need to construct an error-prone  prediction of the unobserved deep state; see Section~\ref{sec:ns} for more details.

\subsection{Informationally non-unique solutions in finite- and infinite-population games}\label{sec:informationally}

\edit{For  stochastic games wherein  initial states and local noises have non-zero probability distribution over any open set in $\mathbb{R}^{d_x}$ (e.g., multivariate Gaussian probability distribution),  the sub-game perfect  Nash equilibrium presented in Theorem~\ref{thm:aux} and Corollary~\ref{thm:mfs} is also the unique Nash solution, according to~\cite[Proposition 2]{van1980note}.} In addition, it is the unique open-loop Nash equilibrium  for deterministic models,  according to~\cite[Theorem 3]{engwerda1998open}; however,  it is not necessarily the only  Nash equilibrium.  \edit{In particular,  the stochastic game admits a unique Nash solution if  Assumptions~\ref{ass:invertible}--\ref{ass:collaborative_cost} are satisfied, i.e., the unique  Nash solution coincides with the unique feedback Nash  solution of the deterministic version of the game;  see~\cite[Corollary 4 of Chapter 6]{Basarbook}. Note that the  deterministic game may   admit more than one Nash solutions under the above assumptions.} 
To demonstrate this, we borrow some remarks from~\cite{basar1976uniqueness}  to explain the non-uniqueness aspect of the solution, also called \emph{informationally non-unique} solutions. Consider a two-player game (i.e., $n=2$), where the Nash strategy  is linear in the states of players. When  there is no uncertainty (deterministic case), the state  of player 1 can  be represented by  the previous states of players 1 and 2 in   many  different ways. Each representation may  lead to a different optimization problem for  player~2, resulting in a distinct solution. On the other hand, when the model is stochastic, there is only one unique  representation of  the solution,  which is the closed-loop memoryless representation in Theorem~\ref{thm:aux} and Corollary~\ref{thm:mfs}.

The above informational non-uniqueness  feature is useful in  explaining some of the non-uniqueness results in the infinite-population game.   Due to the asymptotically negligible (vanishing) effect of individuals, one can express the infinite-population game as   a two-player game between  a generic player and an infinite-population player. When local noises are independent,  the dynamics of the deep state (i.e., the state of the infinite-population player) becomes deterministic; hence,  the deep state may be viewed as an external predictable  effect called (weighted) mean-field.  The mean-field can be  represented in different ways based on its previous states,  where each representation may lead to a different best-response equation at the generic player, resulting in non-unique solutions. Consequently,    the infinite-population game can have a unique \emph{sequential} equilibrium but admit  uncountably many  non-sequential ones, or  have no sequential equilibrium but still admit a Nash equilibrium. An immediate implication of the above discussion is that  mean-field equilibrium  does  not necessarily  coincide with the  sub-game perfect Nash equilibrium. For classical models with a tracking cost function,  however,  it is  shown that Assumption~\ref{ass:invertible_infinite_pop}  holds, implying that the classical  mean-field equilibrium coincides with the sub-game perfect Nash equilibrium; see Subsection~\ref{sec:special2}.

\section{Main Results for Problem~\ref{prob:ns}}\label{sec:ns}
So far, we have assumed that the deep state can be shared among players; however, this is not always feasible, specially when the number of players is large. In this case, the dynamic programming decomposition proposed in the previous section does not work because the optimization  problem from each player's point of view is no longer  Markovian. In other words, players do not have access to a sufficient statistic of the past history of states and actions of all players.  As a result, we  propose asymptotic equilibria under NS information structure, when the weights are  asymptotically vanishing. In particular,  we study the homogeneous weights  with a special interest  because they lead to a  finite-population (population-size-dependent) approximation that is  generally different from the infinite-population (population-size-independent) approximation. To establish the main results, we restrict attention to Assumption~\ref{assump:independent_n}  as well as the  following assumptions, unless  stated otherwise. 
\begin{Assumption}\label{assump:iid}
The initial local states $(x^1_1,\ldots,x^n_1)$ are independent random variables with  identical mean $\mu_x \in \mathbb{R}^{d_x}$ (that is bounded and  independent of $n$). In addition,  local noises $(w^1_t,\ldots,w^n_t)$, $t \in \mathbb{N}_T$,  are independent random variables.  Furthermore,  auto-covariance matrices $ \VAR(x^i_1)$ and $\VAR(w^i_t)$ are  bounded and  independent of $n$  for any $i \in \mathbb{N}_n$ and  $t \in \mathbb{N}_T$.
\end{Assumption}
\begin{Assumption}\label{ass:existence_finite}
For asymptotically vanishing weights,  there exists a sufficiently large $n_0 \in \mathbb{N}$ such that the SPNE strategy $\mathbf g^\ast_n$  exists and is unique for any $n \geq  n_0$, and takes the  form:
\begin{equation}\label{eq:suppose_strategy}
u^i_t=\theta^n_tx^i_t+(\bar \theta^n_t -\theta^n_t)\bar x_t,
\end{equation}
where $\theta^n_t$ and $\bar \theta^n_t$ are continuous and uniformly bounded in $n$ such that $\lim_{n \rightarrow \infty} \theta^n_t=\theta^\infty_t$ and $\lim_{n \rightarrow \infty} \bar  \theta^n_t=\bar \theta^\infty_t$.
\end{Assumption}
\begin{Proposition}[Sufficient conditions for Assumption~\ref{ass:existence_finite}]\label{cor:sufficient_7}
 Two sufficient conditions for Assumption~\ref{ass:existence_finite}  are as follows.  For  homogeneous weights,  Assumptions~\ref{assump:independent_n} and~\ref{ass:invertible} hold  for every $n \geq n_0$.  For the social cost function with  positive weights,  Assumptions~\ref{assump:independent_n} and~\ref{ass:collaborative_cost} hold.
\end{Proposition}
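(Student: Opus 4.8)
The plan is to reduce the verification of Assumption~\ref{ass:existence_finite} to Theorem~\ref{thm:aux}, which already supplies the existence, uniqueness, and precise functional form of the SPNE in each of the two regimes. Concretely, for homogeneous weights the first bullet of Theorem~\ref{thm:aux} guarantees, under Assumptions~\ref{assump:independent_n} and~\ref{ass:invertible}, a unique SPNE of the form~\eqref{eq:thm-optimal-mfs-u_1}, which is exactly the form~\eqref{eq:suppose_strategy} with $\theta^n_t:=\theta_t(\tfrac1n)$ and $\bar\theta^n_t:=\bar\theta_t(\tfrac1n)$; for the social cost with positive weights the third bullet supplies, under Assumptions~\ref{assump:independent_n} and~\ref{ass:collaborative_cost}, a unique SPNE of the form~\eqref{eq:thm-optimal-mfs-u_3}. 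Thus the only thing left to check in both cases is that the feedback gains are continuous and uniformly bounded in $n$ with well-defined limits as $n\to\infty$.

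The social-cost case settles itself. By the third bullet of Theorem~\ref{thm:aux} the gains $\theta_t$ and $\bar\theta_t$ in~\eqref{eq:thm-optimal-mfs-u_3} are given by~\eqref{eq:optimal_collaborative_strategy} and, crucially, do not depend on the individual weight $\alpha^i_n$; since Assumption~\ref{assump:independent_n} makes all the model matrices independent of the number of players, these gains are literally constant in $n$. Hence $\theta^n_t=\theta^\infty_t=\theta_t$ and $\bar\theta^n_t=\bar\theta^\infty_t=\bar\theta_t$ for every $n$, and continuity, uniform boundedness and convergence hold trivially. The homogeneous-weight case is where the real work lies.

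For homogeneous weights the gains are $\theta^n_t=\theta_t(\tfrac1n)$ and $\bar\theta^n_t=\bar\theta_t(\tfrac1n)$ obtained from the backward recursion~\eqref{eq:riccati-bar-m} and~\eqref{eq:breve-f}. Under Assumption~\ref{assump:independent_n} this recursion involves only fixed matrices together with finitely many matrix products, sums and inversions, so the map $\alpha\mapsto\bigl(\mathbf P^\alpha_t,F_t(\alpha),\bar F_t(\alpha),K_t(\alpha),\bar K_t(\alpha)\bigr)$, and therefore $\alpha\mapsto\bigl(\theta_t(\alpha),\bar\theta_t(\alpha)\bigr)$, is a rational, hence continuous, function of $\alpha$ on the set where $F_t(\alpha)$ and $\bar F_t(\alpha)$ are invertible. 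I would substitute $\alpha=\tfrac1n$ and read continuity in $n$ off continuity in $\alpha$. Assumption~\ref{ass:invertible} holding for every $n\ge n_0$ guarantees the required inverses exist along the sequence $\alpha=\tfrac1n\downarrow 0$; extending the map continuously to the accumulation point $\alpha=0$ then yields a finite limit $\theta^\infty_t=\theta_t(0)$, $\bar\theta^\infty_t=\bar\theta_t(0)$ and, by continuity on the compact set $\{\tfrac1n:n\ge n_0\}\cup\{0\}$, uniform boundedness. The main obstacle is precisely this passage to the limit: one must rule out a pole of the rational gain map at $\alpha=0$, equivalently show that the limiting matrices $F_t(0)=R_t+B_t^\intercal(\mathbf P^0_{t+1})^{1,1}B_t$ and $\bar F_t(0)=R_t+S^u_t+B_t^\intercal(\mathbf P^0_{t+1})^{2,1}(B_t+\bar B_t)$ stay invertible, so that $F_t(\tfrac1n)^{-1}$ and $\bar F_t(\tfrac1n)^{-1}$ do not blow up. I would argue this by propagating invertibility backward through the finitely many steps of~\eqref{eq:riccati-bar-m} and invoking continuity of the inverse on the open set of invertible matrices, which simultaneously delivers the uniform bound and identifies the limit with the infinite-population gains of the second bullet of Theorem~\ref{thm:aux}.
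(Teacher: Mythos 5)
Your proposal is correct and follows essentially the same route as the paper: both cases are reduced to Theorem~\ref{thm:aux}, with the social-cost gains constant in $n$ and the homogeneous-weight gains read off as continuous, uniformly bounded functions of $\alpha=1/n$ via~\eqref{eq:riccati-bar-m} and~\eqref{eq:breve-f}. You are in fact more explicit than the paper's two-sentence proof about the one delicate point---ruling out a pole of the gain map at $\alpha=0$ so that $F_t(1/n)^{-1}$ and $\bar F_t(1/n)^{-1}$ remain bounded---which the paper simply asserts.
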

\begin{proof}
The proof of  the homogeneous case follows from Theorem~\ref{thm:aux} and the fact that  matrices in the  dynamics and cost functions are independent of $n$ under Assumption~\ref{assump:independent_n},  the weight $\alpha^i_n=1/n$ is continuous  and \edit{monotonically decreasing  (or increasing)} in $n$,  and  strategy~\eqref{eq:thm-optimal-mfs-u_1} has an infinite-population limit, according to~\eqref{eq:riccati-bar-m} and~\eqref{eq:breve-f}. Similarly, the proof of the social cost follows from  Theorem~\ref{thm:aux} and the fact that  $\mathbf g^\ast_n$ is independent of $n$.
\end{proof}
We  now present two types of prediction for the deep state, where the first one  is  a  population-size-dependent  prediction and the second one is  a population-size-independent  one.
\begin{Definition}[Population-size-dependent prediction]
For a finite-population game with $n$ players, the expected value of the deep state  under the SPNE strategy, given by~\eqref{eq:suppose_strategy}, is referred to as  the population-size-dependent  prediction, 
\begin{equation}\label{eq:deterministic_process}
z^n_{t+1}:=(A_t+\bar A_t+ (B_t +\bar B_t) \bar \theta^n_t)z^n_t,  \quad t \in \mathbb{N}_T,
\end{equation}
where  the  initial value $z^n_1:=\Exp{\bar x_1}$.
\end{Definition}
\begin{Definition}[Population-size-independent (mean-field) prediction]\label{def:PI_prediction}
For an  infinite-population game, the deep state  under the SPNE strategy, given by~\eqref{eq:suppose_strategy}, is referred to as  the population-size-independent (mean-field)  prediction,
\begin{equation}\label{eq:deterministic_process-inf}
z^\infty_{t+1}:=(A_t+\bar A_t+ (B_t +\bar B_t) \bar \theta^\infty_t)z^\infty_t,  \quad t \in \mathbb{N}_T,
\end{equation}
where  the  initial value $z^\infty_1:=\Exp{\bar x_1}$.
\end{Definition}
An immediate observation is that: (a)  $z^n_t=\bar x_t$ in deterministic game, (b) $z^n_t=z^\infty_t$ for the social cost with population-size-independent model as  $\theta^n_t = \theta^\infty_t$, and (c) $z^n_t \neq z^\infty_t$ for homogeneous weights, in general, as $\theta^n_t \neq \theta^\infty_t$.  The above predictions can be used in the SPNE strategy  $\mathbf g^\ast_n$ in~\eqref{eq:suppose_strategy}  to obtain approximate equilibria.  To measure the quality of the approximate equilibria,  the following  two gaps are defined.
\edit{
\begin{Definition}[Performance gap]\label{def:performance_gap}
The performance gap of an NS strategy $\hat{\mathbf g}$  is  defined  as the maximum  loss of performance that a player  can experience   when all players switch from the SPNE strategy  $\mathbf g^\ast_n$ to  the strategy $\hat{\mathbf g}$. In particular,   at any stage of the game  $t_0 \in \mathbb{N}_T$,
\begin{equation}
\Delta J_P(\hat{\mathbf g}, \mathbf g^\ast_n)_{t_0}:= \max_{i \in \mathbb{N}_n}|\Delta J^i_P(\hat{\mathbf g}, \mathbf g^\ast_n)_{t_0}|,
\end{equation}
where $\Delta J^i_P(\hat{\mathbf g}, \mathbf g^\ast_n)_{t_0}:= J^i_n(\hat {\mathbf g}^i,\hat {\mathbf g}^{-i})_{t_0} - J^i_n(\mathbf g^{\ast,i}_n,\mathbf g^{\ast,-i}_n)_{t_0}$.
\end{Definition}
\begin{Definition}[Rationality gap]\label{def:rationality_gap}
The rationality gap   of  an NS strategy $\hat{\mathbf g}$  is  defined  as the maximum  benefit  that a player can achieve by unilateral deviation   when all other  players switch from the SPNE strategy  $\mathbf g^\ast_n$ to  the strategy $\hat{\mathbf g}$. In particular, at any stage  $t_0 \in \mathbb{N}_T$,
\begin{equation}
\Delta J_R(\hat{\mathbf g}, \mathbf g^\ast_n)_{t_0}:=\sup_{\mathbf g^i} |\Delta J^i_R(\mathbf g^i,\hat{\mathbf g}, \mathbf g^\ast_n)_{t_0}|,
\end{equation}
where $\Delta J^i_R(\mathbf g^i, \hat{\mathbf g}, \mathbf g^\ast_n)_{t_0}:=J^i_n(\mathbf g^i, \hat{\mathbf g}^{-i})_{t_0}-J^i_n(\mathbf g^i,\hat{\mathbf g}^{\ast,-i}_n)_{t_0}$.
\end{Definition}
By definition,   the above gaps are zero when $ \hat{\mathbf g}=\mathbf g^\ast_n$. }

\subsection{Two asymptotic equilibria}
In this subsection,  we propose two asymptotic equilibria, where the first one uses the population-size-dependent prediction  and the second one uses the population-size-independent prediction for  asymptotically vanishing weights.

To distinguish between games under DSS and NS information structures,  let  $\hat x^i_t \in \mathbb{R}^{d_x}$ and $\hat u^i_t \in \mathbb{R}^{d_u}$, respectively, denote  the state and action of player $i \in \mathbb{N}_n$ at time $t \in \mathbb{N}_T$ under NS information structure. 
Let $\hat{\bar x}_t:= \sum_{i=1}^n \alpha^i_n \hat x^i_t$ and $\hat{\bar u}_t:= \sum_{i=1}^n \alpha^i_n \hat u^i_t$, $t \in \mathbb{N}_T$.  Let also the initial state of player  $i$ be  $\hat x^i_1=x^i_1$. At  time $t \in \mathbb{N}_T$, the state of player $i$ evolves according to~\eqref{eq:dynamics-mf}  as follows:
\begin{equation}\label{eq:dynamics-ns}
\hat x^i_{t+1}=A_t \hat x^i_t + B_t \hat u^i_t + \bar A_t \hat{\bar x}_t + \bar B_t \hat{\bar u}_t+w^i_t, \quad t \in \mathbb{N}_T,
\end{equation}
where the proposed NS control action of player $i $ is given by:
\begin{equation}\label{eq:action-ns}
\hat u^i_t=\theta^n_t \hat x^i_t + ( \bar \theta^n_t -\theta^n_t) z^n_t. \hspace{.5cm} \text{(SAPDE)}
\end{equation}
Alternatively,  one can use the  infinite-population solution (that does not depend on $n$)   as follows:
\begin{equation}\label{eq:action-ns-inf}
\hat u^i_t=\theta^\infty_t \hat x^i_t + ( \bar \theta^\infty_t -\theta^\infty_t) z^\infty_t. \hspace{.5cm} \text{(SWMFE)}
\end{equation}

In the sequel, strategies~\eqref{eq:action-ns} and  \eqref{eq:action-ns-inf} are denoted by  $\hat{\mathbf g}_n$ and  $\hat{\mathbf g}_\infty$, respectively,  and are  referred to as~\emph{sequential asymptotic population-size-dependent equilibrium} (SAPDE) and \emph{sequential weighted mean-field equilibrium} (SWMFE).\footnote{Note that  strategies~\eqref{eq:action-ns} and~\eqref{eq:action-ns-inf}  are  not in the form of forward-backward equations and their numerical computation  requires no  fixed-point condition, i.e.,  their complexity increases  only linearly with respect to  the horizon~$T$.} \edit{It is to be noted that the above  strategies can be implemented under NS information structure, where each player $i$ observes its local state $\hat x^i_t$ and computes the prediction signals $z^n_{1:T}$ and  $z^\infty_{1:T}$, according to \eqref{eq:deterministic_process} and~\eqref{eq:deterministic_process-inf}, respectively.}

\subsection{Asymptotic analysis}
\begin{Lemma}\label{lemma:PG_asymptotic}
Let Assumptions~\ref{assump:independent_n},~\ref{assump:iid} and~\ref{ass:existence_finite} hold for any $n \geq n_0$.  The following performance gaps converge to zero at any $t_0 \in \mathbb{N}_T$, as $n \rightarrow \infty$: 
$\lim_{n \rightarrow \infty} \Delta J_P(\hat{\mathbf g}_n,\mathbf g^\ast_n)_{t_0} =0$ and $ \lim_{n \rightarrow \infty} \Delta J_P(\hat{\mathbf g}_\infty,\mathbf g^\ast_n)_{t_0}=0. 
$
\end{Lemma}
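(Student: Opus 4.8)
The plan is to put both closed loops on a common probability space, driving the SPNE loop $\mathbf g^\ast_n$ and the NS loop $\hat{\mathbf g}$ with the \emph{same} initial states $\hat x^i_1=x^i_1$ and noises $w^i_{1:T}$, and then to control the resulting per-player trajectory differences in mean square. The two loops use identical feedback gains; they differ only in that the realized deep state $\bar x_t$ in~\eqref{eq:suppose_strategy} is replaced by the deterministic prediction $z^n_t$ of~\eqref{eq:deterministic_process} (for SAPDE) or $z^\infty_t$ (for SWMFE). The whole argument therefore rests on a single fact: under Assumption~\ref{assump:iid} with homogeneous (or asymptotically vanishing) weights, the realized deep state concentrates on its deterministic prediction as $n\to\infty$. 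I expect this concentration to be the crux of the proof; everything after it is a routine finite-horizon quadratic perturbation estimate.

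First I would establish the concentration. Aggregating the dynamics~\eqref{eq:dynamics-mf} under~\eqref{eq:suppose_strategy} and using $\sum_i\alpha^i_n=1$ gives $\bar x_{t+1}=(A_t+\bar A_t+(B_t+\bar B_t)\bar\theta^n_t)\bar x_t+\bar w_t$, i.e.\ exactly recursion~\eqref{eq:deterministic_process} perturbed by the aggregate noise $\bar w_t$ and the random start $\bar x_1$. Since $\Exp{\bar x_1}=z^n_1$, the fluctuation $f_t:=\bar x_t-z^n_t$ has mean zero and obeys the homogeneous recursion driven by $\bar w_t$. By Assumption~\ref{assump:iid} and independence, $\VAR(\bar w_t)=\sum_i(\alpha^i_n)^2\VAR(w^i_t)=O(1/n)$ and $\VAR(\bar x_1)=O(1/n)$, so with the uniformly bounded gains of Assumption~\ref{ass:existence_finite} over the finite horizon one gets $\Exp{\|f_t\|^2}=O(1/n)$. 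An identical aggregation of the NS loop~\eqref{eq:dynamics-ns}--\eqref{eq:action-ns} shows the error $e_t:=\hat{\bar x}_t-z^n_t$ satisfies $e_{t+1}=(A_t+\bar A_t+(B_t+\bar B_t)\theta^n_t)e_t+\bar w_t$ with $e_1=\bar x_1-z^n_1$, whence $\Exp{e_t}=0$ and $\Exp{\|e_t\|^2}=O(1/n)$ as well. This is the law-of-large-numbers (propagation-of-chaos) step.

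Next I would propagate these bounds to individual trajectories. Subtracting the closed-loop recursions for $x^i_t$ and $\hat x^i_t$ and substituting $\bar x_t=z^n_t+f_t$, $\hat{\bar x}_t=z^n_t+e_t$, the $z^n_t$ terms cancel identically and the difference $\delta^i_t:=\hat x^i_t-x^i_t$ satisfies
\[
\delta^i_{t+1}=(A_t+B_t\theta^n_t)\delta^i_t+(\bar A_t+\bar B_t\theta^n_t)e_t-\bigl(B_t(\bar\theta^n_t-\theta^n_t)+\bar A_t+\bar B_t\bar\theta^n_t\bigr)f_t,
\]
with $\delta^i_1=0$. Because the forcing is $O(1/n)$ in mean square and the gains are uniformly bounded, $\Exp{\|\delta^i_t\|^2}=O(1/n)$, and likewise $\Exp{\|\hat u^i_t-u^i_t\|^2}=O(1/n)$ and $\Exp{\|\hat{\bar x}_t-\bar x_t\|^2}=O(1/n)$, all uniformly in $i$. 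I would also record the elementary uniform second-moment bounds $\sup_n\Exp{\|x^i_t\|^2}<\infty$ and $\sup_n\Exp{\|\hat x^i_t\|^2}<\infty$, which hold over the finite horizon since the driving variances are bounded and the closed-loop matrices are products of finitely many uniformly bounded matrices.

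Finally I would convert the mean-square bounds into a cost bound. Every term of the per-step cost~\eqref{eq:cost-mfs}, including the cross terms and the empirical-average regression terms $\sum_j\alpha^j_n(x^j_t)^\intercal G^x_t x^j_t$, is a quadratic or bilinear form; using $a^\intercal M a-b^\intercal M b=(a-b)^\intercal M(a+b)$ with Cauchy--Schwarz gives $|\Exp{a^\intercal Ma}-\Exp{b^\intercal Mb}|\le\|M\|\,\Exp{\|a-b\|^2}^{1/2}\Exp{\|a+b\|^2}^{1/2}=O(1/\sqrt n)$, where $a,b$ denote the corresponding NS and SPNE quantities. Summing the finitely many steps from $t_0$ to $T$ and taking the maximum over $i\in\mathbb{N}_n$ (the bound being uniform in $i$) yields $\Delta J_P(\hat{\mathbf g}_n,\mathbf g^\ast_n)_{t_0}=O(1/\sqrt n)\to0$. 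For SWMFE the same scheme applies after replacing $(\theta^n_t,\bar\theta^n_t,z^n_t)$ by $(\theta^\infty_t,\bar\theta^\infty_t,z^\infty_t)$: the extra gain- and prediction-mismatch terms are absorbed using the deterministic convergences $\theta^n_t\to\theta^\infty_t$, $\bar\theta^n_t\to\bar\theta^\infty_t$ and $z^n_t\to z^\infty_t$ granted by Assumption~\ref{ass:existence_finite}, so that $\Exp{\|\bar x_t-z^\infty_t\|^2}\le2\Exp{\|\bar x_t-z^n_t\|^2}+2\|z^n_t-z^\infty_t\|^2=o(1)$ and the performance gap again vanishes.
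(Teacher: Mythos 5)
Your proposal is correct, and it rests on exactly the same pivotal fact as the paper's Appendix~\ref{sec:proof_lemma:PG_asymptotic}: under Assumption~\ref{assump:iid} the aggregate noise satisfies $\VAR(\bar w_t)=\sum_i(\alpha^i_n)^2\VAR(w^i_t)=O(1/n)$, so the realized deep states $\bar x_t$ and $\hat{\bar x}_t$ both concentrate on the deterministic prediction, and the two closed loops become indistinguishable. The execution, however, is genuinely different. The paper argues qualitatively: it invokes the strong law of large numbers to get almost-sure convergence of $\bar x_t$, $\hat{\bar x}_t$ and $z^n_t$ to the common limit $z^\infty_t$, observes that both costs are polynomial expressions in $\mu_x$, $\VAR(x^i_1)$, $\VAR(w^i_t)$ that are continuous and uniformly bounded in $n$, and concludes that the two costs share the same limit; no rate is produced at this stage. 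You instead run an explicit pathwise coupling (same noises and initial states in both loops), derive the closed recursions for $f_t=\bar x_t-z^n_t$, $e_t=\hat{\bar x}_t-z^n_t$ and $\delta^i_t=\hat x^i_t-x^i_t$ (your cancellation of the $z^n_t$ drift terms checks out and is consistent with the paper's Lemma~\ref{lemma:breve_equality}), propagate $O(1/n)$ mean-square bounds uniformly in $i$ over the finite horizon, and convert them to an $O(1/\sqrt n)$ cost gap via the quadratic-form identity and Cauchy--Schwarz. What each approach buys: yours is self-contained, quantitative, and handles the SAPDE and SWMFE cases in one scheme with an explicit (if suboptimal) rate; the paper's is shorter here but defers all quantitative content to the later relative-distance machinery of Lemmas~\ref{lemma:breve_equality}--\ref{lemma:relative_dynamics_1} and Theorems~\ref{thm:delta_j_correlated}--\ref{thm:NS-optimality}, where the sharper $O(1/n)$ rate is recovered precisely because the cross terms your Cauchy--Schwarz step gives away vanish in expectation (the block ${{}\tilde M^n_t}^{1,1}=\Zero$ in Lemma~\ref{lemma:tilde_M}). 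Both arguments are valid for the statement as posed, which only requires convergence to zero.
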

\begin{proof} 
The proof is presented in Appendix~\ref{sec:proof_lemma:PG_asymptotic}.
\end{proof}

Let $\mathcal{C}_L[0,T]$ denote the  space of measurable functions $g_t: \mathbb{R}^{td_x} \rightarrow \mathbb{R}^{d_u}$, $t \in \mathbb{N}_T$, that are adapted to  the $\sigma$-algebra  generated  by the random variables $\{x^i_1, w^i_1,\ldots,w^i_T\}$, and  are Lipschitz  continuous  in the (local state) space and uniformly  bounded in time. A special case of such functions is the set of stable feedback strategies.

\begin{Assumption}\label{ass:rationality_gap}
Suppose one of the followings holds.
\begin{itemize}
\item[I.] (Decoupled dynamics): Let $\bar A_t$ and $\bar B_t$, $t \in \mathbb{N}_T$, be zero.

\item[II.] (Continuous unilateral deviation): The  unilateral deviations are restricted to Lipschitz continuous strategies in the local state space, i.e. $\mathbf g^i \in \mathcal{C}_L[0,T]$.
\end{itemize}
\end{Assumption}
\begin{Remark}
\emph{For coupled dynamics with  discontinuous unilateral  strategy  $\mathbf{g}^i$, there is no guarantee that the proposed strategies   are asymptotic Nash equilibria because in such a case,  $\lim_{\varepsilon \rightarrow 0} g^i_t(x^i_t+\varepsilon \mathbf 1_{d_x \times 1}) \neq g^i_t(x^i_t) $.   The  restriction to  the  continuous strategies  is a standard assumption in the literature of mean-field games and  extends the asymptotic results to coupled dynamics  with PS unilateral deviations $\mathbf g^i \in \GC$. }
\end{Remark}

\begin{Lemma}\label{lemma:rationality_gap}
Let Assumptions~\ref{assump:independent_n},~\ref{assump:iid},~\ref{ass:existence_finite} and~\ref{ass:rationality_gap} hold  for any $n \geq n_0$, $n_0 \in \mathbb{N}$. The following rationality gaps  converge to zero at any $t_0 \in \mathbb{N}_T$, as $n \rightarrow \infty$: 
$
\lim_{n \rightarrow \infty} \Delta J_R(\hat{\mathbf g}_n, \mathbf g^\ast_n)_{t_0}=0$ and $ \lim_{n \rightarrow \infty} \Delta J_R(\hat{\mathbf g}_\infty,\mathbf g^\ast_n)_{t_0}=0.
$
\end{Lemma}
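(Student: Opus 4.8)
The plan is to compare, for a fixed deviation $\mathbf g^i$, the two closed-loop systems appearing in Definition~\ref{def:rationality_gap}: system~A, in which the opponents play the NS strategy $\hat{\mathbf g}^{-i}$ of \eqref{eq:action-ns}, and system~B, in which they play the SPNE strategy $\mathbf g^{\ast,-i}_n$ of \eqref{eq:suppose_strategy}. I would couple the two systems by feeding them identical initial states and noise realizations, and reduce everything to a single claim: in each system the realized deep state stays within mean-square distance $O(1/\sqrt n)$ of the \emph{deterministic} signal $z^n_t$ of \eqref{eq:deterministic_process}. Granting this, since the per-step cost \eqref{eq:cost-mfs} is quadratic in the local pair $(x^i_t,u^i_t)$ and in the deep quantities $(\bar x_t,\bar u_t)$, the cost player $i$ incurs in each system differs from the cost it would incur against the reference signal $z^n_t$ by a vanishing amount; the triangle inequality then gives $|\Delta J^i_R|\to 0$, and the bounds being uniform in the index $i$ by exchangeability, the same holds for the gap itself.

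The crux is the deep-state convergence. Averaging the dynamics \eqref{eq:dynamics-ns} against the weights $\alpha^i_n$ and substituting the opponents' feedback laws, the deep state in each system obeys a recursion whose transition matrix is one of the uniformly bounded closed-loop matrices built from $\theta^n_t$ and $\bar\theta^n_t$, driven by $\bar w_t:=\sum_i\alpha^i_n w^i_t$ together with residual terms $r^i_t$ produced by the single deviating player, each carrying a factor $\alpha^i_n$. Because the noises are independent with bounded auto-covariance (Assumption~\ref{assump:iid}), $\VAR(\bar w_t)=\sum_i(\alpha^i_n)^2\VAR(w^i_t)=O(1/n)$, and because $\alpha^i_n=O(1/n)$ the residual $r^i_t$ is likewise negligible. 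Subtracting the deterministic recursion \eqref{eq:deterministic_process}, the error $\bar x_t-z^n_t$ (and analogously $\hat{\bar x}_t-z^n_t$) satisfies a linear recursion with uniformly bounded transition and forcing of mean square $O(1/\sqrt n)$; over the finite horizon $T$, iterating this recursion with the uniformly bounded gains of Assumption~\ref{ass:existence_finite} yields $\Exp{\|\bar x_t-z^n_t\|^2}=O(1/n)$ for every $t\in\mathbb{N}_T$ in both systems.

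It remains to transfer the deep-state estimate into a cost estimate, uniformly over $\mathbf g^i$, and this is where the two branches of Assumption~\ref{ass:rationality_gap} enter and where the main obstacle lies. Under Assumption~\ref{ass:rationality_gap}.I the local recursion $x^i_{t+1}=A_tx^i_t+B_tu^i_t+w^i_t$ does not see the deep state, so for a no-sharing deviation player $i$'s own state and action trajectories are pathwise identical in the two systems; all $(x^i_t,u^i_t)$-dependent terms of \eqref{eq:cost-mfs} cancel exactly, and only the deep-state, deep-action, and covariance terms survive, each controlled by the estimate above together with $\sum_j\alpha^j_n=1$. Under Assumption~\ref{ass:rationality_gap}.II the dynamics are coupled, so the vanishing deep-state perturbation propagates into player $i$'s trajectory; here I would use the Lipschitz bound on $\mathbf g^i$ to control the induced action difference and run a second finite-horizon iteration to show this trajectory difference is again $O(1/\sqrt n)$. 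The delicate point is \emph{uniformity} in $\mathbf g^i$: a discontinuous or unbounded deviation could amplify the small deep-state perturbation into a non-vanishing change in player $i$'s cost, which is precisely why continuity (or decoupling) is imposed, and the cross terms pairing player $i$'s possibly large but square-summable action with the small deep-action perturbation will have to be dominated by a Cauchy--Schwarz step against the admissibility bound $\Exp{\sum_t (u^i_t)^\intercal u^i_t}<\infty$.

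Finally, the claim for the SWMFE strategy $\hat{\mathbf g}_\infty$ of \eqref{eq:action-ns-inf} follows by the identical scheme with $z^n_t$ replaced by $z^\infty_t$ of \eqref{eq:deterministic_process-inf}, after absorbing the additional discrepancy $z^n_t-z^\infty_t$, which tends to zero since $\theta^n_t\to\theta^\infty_t$ and $\bar\theta^n_t\to\bar\theta^\infty_t$ under Assumption~\ref{ass:existence_finite}. Thus both rationality gaps vanish as $n\to\infty$.
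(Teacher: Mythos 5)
Your proposal is correct and follows essentially the same route as the paper's proof in Appendix~C: couple the two closed-loop systems on a common noise realization, show that the realized deep state under either opponent profile tracks the deterministic prediction $z^n_t$ (the paper argues this via the strong law of large numbers and almost-sure convergence, you via mean-square $O(1/n)$ variance bounds), and then invoke the two branches of Assumption~\ref{ass:rationality_gap} exactly as the paper does --- pathwise cancellation of the deviator's trajectory under decoupled dynamics, Lipschitz propagation under continuous deviations --- before handling $\hat{\mathbf g}_\infty$ by absorbing $z^n_t-z^\infty_t\to 0$. The uniformity-in-$\mathbf g^i$ concern you flag is real but is treated no more carefully in the paper itself, so your argument matches the published one in both structure and level of rigor.
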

\begin{proof}
The proof is presented in Appendix~\ref{sec:proof_lemma:rationality_gap}.
\end{proof}

\subsection{A non-asymptotic result}  
Define the following   relative distances at  time $t \in \mathbb{N}_T$ for any player $i \in \mathbb{N}_n$:
\begin{equation}\label{eq:relative_error}
e^i_t:=\hat x^i_t -\hat{ \bar x}_t,\quad \zeta^i_t=x^i_t-\bar x_t,  \quad  e_t:=\hat{\bar x}_t - z^n_t, \quad  \zeta_t:=\bar x_t -z^n_t.
\end{equation}
In addition, denote $\Delta \hat x^i_t=\hat x^i_t - \hat{\bar x}_t$ and $\Delta \hat u^i_t=\hat u^i_t- \hat{\bar u}_t$. 
\begin{Lemma}\label{lemma:breve_equality}
Let Assumption~\ref{ass:existence_finite}  hold. Given  any player $i \in \mathbb{N}_n$ and  any time $t \in \mathbb{N}_T$,  $\Delta x^i_t=\Delta \hat{x}^i_t$ and $\Delta u^i_t=\Delta \hat u^i_t$. This implies that the individualized relative distances are equal, i.e.,  $e^i_t=\zeta^i_t$, $\forall t \in \mathbb{N}_T$, $\forall i \in \mathbb{N}_n$. Therefore, the dynamics of the relative distances in~\eqref{eq:relative_error} between the finite-population strategies~\eqref{eq:suppose_strategy} and~\eqref{eq:action-ns} can be described by: 
 \begin{equation}\label{eq:dynamics_error_relative}
\VEC(e^i_{t+1}, e_{t+1},  \zeta_{t+1})=\tilde A^n_t \VEC(e^i_{t}, e_{t},  \zeta_{t})+ \VEC( \Delta w^i_t,  \bar w_t,  \bar w_t),
 \end{equation}
 where $\tilde A^n_t:=\DIAG(A_t+B_t \theta^n_t,A_t+\bar A_t+(B_t+\bar B_t)\theta^n_t,A_t+\bar A_t+(B_t+\bar B_t)\bar \theta^n_t)$. Similarly, the dynamics of the relative distances  between  the infinite-population limit of strategy~\eqref{eq:suppose_strategy} and strategy~\eqref{eq:action-ns-inf}, when applied to the finite-population game, can be expressed by:
  \begin{equation}\label{eq:dynamics_error_relative_inf}
\VEC(e^i_{t+1}, e_{t+1},  \zeta_{t+1})=\tilde A^\infty_t \VEC(e^i_{t}, e_{t},  \zeta_{t})+ \VEC( \Delta w^i_t,  \bar w_t,  \bar w_t).
 \end{equation}
\end{Lemma}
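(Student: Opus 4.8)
The plan is to reduce everything to the elementary observation that each player's deviation from the weighted average obeys a recursion that is blind to both the deep state and the prediction signal, so that the DSS and NS games produce \emph{identical} deviations. First I would compute the deep action under each strategy. Weighted-averaging the SPNE strategy~\eqref{eq:suppose_strategy} with the weights $\alpha^j_n$ and using $\sum_j \alpha^j_n = 1$ together with $\bar x_t = \sum_j \alpha^j_n x^j_t$ gives $\bar u_t = \bar\theta^n_t \bar x_t$, whence $\Delta u^i_t = u^i_t - \bar u_t = \theta^n_t(x^i_t - \bar x_t) = \theta^n_t \Delta x^i_t$, since the common term $(\bar\theta^n_t - \theta^n_t)\bar x_t$ cancels. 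Doing the same for SAPDE~\eqref{eq:action-ns} yields $\hat{\bar u}_t = \theta^n_t \hat{\bar x}_t + (\bar\theta^n_t - \theta^n_t) z^n_t$, and the crucial point is that the prediction term $(\bar\theta^n_t - \theta^n_t) z^n_t$ is identical in $\hat u^i_t$ and in $\hat{\bar u}_t$, so it cancels in the difference and $\Delta \hat u^i_t = \theta^n_t \Delta \hat x^i_t$; the prediction signal leaves no trace in the deviation coordinate.

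Next I would subtract the deep-state dynamics from the local dynamics~\eqref{eq:dynamics-mf}. Weighted-averaging~\eqref{eq:dynamics-mf} gives $\bar x_{t+1} = (A_t + \bar A_t)\bar x_t + (B_t + \bar B_t)\bar u_t + \bar w_t$, so the coupling terms $\bar A_t \bar x_t$ and $\bar B_t \bar u_t$ drop out of the difference and one is left with $\Delta x^i_{t+1} = A_t \Delta x^i_t + B_t \Delta u^i_t + \Delta w^i_t = (A_t + B_t\theta^n_t)\Delta x^i_t + \Delta w^i_t$, and identically $\Delta \hat x^i_{t+1} = (A_t + B_t\theta^n_t)\Delta \hat x^i_t + \Delta w^i_t$ for the NS game~\eqref{eq:dynamics-ns}. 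Both recursions share the same driving term $\Delta w^i_t = w^i_t - \bar w_t$ (the two games are driven by the same noise realizations) and, since $\hat x^i_1 = x^i_1$ forces $\hat{\bar x}_1 = \bar x_1$ and hence $\Delta \hat x^i_1 = \Delta x^i_1$, a straightforward induction on $t$ gives $\Delta x^i_t = \Delta \hat x^i_t$ for all $t \in \mathbb{N}_T$. This is precisely $e^i_t = \zeta^i_t$, and it supplies the first diagonal block $A_t + B_t\theta^n_t$ of $\tilde A^n_t$.

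Then I would derive the remaining two blocks directly from $e_t = \hat{\bar x}_t - z^n_t$ and $\zeta_t = \bar x_t - z^n_t$. For $\zeta_t$, substituting $\bar u_t = \bar\theta^n_t \bar x_t$ gives $\bar x_{t+1} = (A_t + \bar A_t + (B_t + \bar B_t)\bar\theta^n_t)\bar x_t + \bar w_t$, and subtracting the defining recursion~\eqref{eq:deterministic_process} for $z^n_t$ (the same map applied to $z^n_t$ with no noise) yields $\zeta_{t+1} = (A_t + \bar A_t + (B_t + \bar B_t)\bar\theta^n_t)\zeta_t + \bar w_t$, the third block. For $e_t$, substituting $\hat{\bar u}_t$ into the averaged NS dynamics gives $\hat{\bar x}_{t+1} = (A_t + \bar A_t + (B_t + \bar B_t)\theta^n_t)\hat{\bar x}_t + (B_t + \bar B_t)(\bar\theta^n_t - \theta^n_t) z^n_t + \bar w_t$. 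The only nontrivial algebra is in $e_{t+1} = \hat{\bar x}_{t+1} - z^n_{t+1}$: the $z^n_t$ contribution from $\hat{\bar x}_{t+1}$ and from $z^n_{t+1} = (A_t + \bar A_t + (B_t + \bar B_t)\bar\theta^n_t) z^n_t$ must be shown to recombine into $-(A_t + \bar A_t + (B_t + \bar B_t)\theta^n_t) z^n_t$, the $\bar\theta^n_t$ terms annihilating, so that $e_{t+1} = (A_t + \bar A_t + (B_t + \bar B_t)\theta^n_t) e_t + \bar w_t$. I expect this recombination of the prediction terms to be the one delicate step; everything else is the deviation-decoupling already exploited under the gauge transformation. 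Stacking the three recursions produces~\eqref{eq:dynamics_error_relative} with $\tilde A^n_t = \DIAG(A_t + B_t\theta^n_t,\, A_t + \bar A_t + (B_t + \bar B_t)\theta^n_t,\, A_t + \bar A_t + (B_t + \bar B_t)\bar\theta^n_t)$, and the infinite-population identity~\eqref{eq:dynamics_error_relative_inf} follows verbatim by replacing $(\theta^n_t, \bar\theta^n_t, z^n_t)$ with $(\theta^\infty_t, \bar\theta^\infty_t, z^\infty_t)$ and invoking~\eqref{eq:deterministic_process-inf} in place of~\eqref{eq:deterministic_process}, since none of the cancellations used the specific values of the gains.
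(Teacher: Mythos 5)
Your proposal is correct and follows essentially the same route as the paper: an induction establishing $\Delta x^i_t=\Delta\hat x^i_t$ and $\Delta u^i_t=\Delta\hat u^i_t$ (because the deep-state and prediction terms cancel in the deviation coordinate, leaving $\Delta u^i_t=\theta^n_t\Delta x^i_t$ under both strategies), followed by reading off the three block recursions from the averaged dynamics. You merely spell out the second and third blocks, including the recombination of the $z^n_t$ terms into $-(A_t+\bar A_t+(B_t+\bar B_t)\theta^n_t)z^n_t$, more explicitly than the paper, which simply cites equations~\eqref{eq:dynamics_joint},~\eqref{eq:thm-optimal-mfs-u_1},~\eqref{eq:dynamics-ns},~\eqref{eq:action-ns} and~\eqref{eq:relative_error}.
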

\begin{proof}
The proof is presented in Appendix~\ref{sec:proof_lemma:breve_equality}.
\end{proof}

Since  the SPNE strategy in Theorem~\ref{thm:aux} and  the proposed strategy~\eqref{eq:action-ns}   are  linear, one can find the exact value  of the  performance gap  in  terms of Lyapunov equations for any arbitrary  (not necessarily large) number of players,  where the primitive random variables can be correlated (not necessarily independent).  To illustrate this point,  we show that the performance gap  $\Delta J^i_{P}(\hat{\mathbf g}_n, \mathbf g^\ast_n)_{t_0}$ with homogeneous weights can be expressed  as a  quadratic function  of the relative distances.

 \begin{Lemma}\label{lemma:relative_dynamics_1}
 Let Assumption~\ref{ass:invertible}  hold. The performance gap $\Delta J^i_{P}(\hat{\mathbf g}_n, \mathbf g^\ast_n)_{t_0}$ can be described as follows:
  \begin{equation}\label{eq:cost_relative_error}
\Delta J^i_{P}(\hat{\mathbf g}_n, \mathbf g^\ast_n)_{t_0}=\Exp{\sum_{t=t_0}^T [e^i_t  \quad e_t \quad \zeta_t]^\intercal \tilde Q^n_t [e^i_t  \quad  e_t \quad \zeta_t]}.
\end{equation}
such that
\begin{align}
&{{}\tilde Q_t^n}^{1,1}:={{}\tilde Q_t^n}^{2,3}:={{}\tilde Q_t^n}^{3,2}:=\Zero,\\
&{{}\tilde Q_t^n}^{1,2 \hspace{-.3cm}}:=\mathbf  Q^{1,2}_t+ (\theta^n_t)^\intercal \mathbf R_t^{1,2} \theta^n_t,
{{}\tilde Q_t^n}^{1,3} \hspace{-.3cm}:=-\mathbf Q_t^{1,2} - (\theta^n_t)^\intercal \mathbf R_t^{1,2} \bar \theta^n_t,\\
& {{}\tilde Q_t^n}^{2,1} \hspace{-.3cm} := \mathbf  Q^{2,1}_t+ (\theta^n_t)^\intercal \mathbf R_t^{2,1} \theta^n_t, {{}\tilde Q_t^n}^{2,2} \hspace{-.2cm}:=\mathbf Q^{2,2}_t+ (\theta^n_t)^\intercal \mathbf R_t^{2,2} \theta^n_t, \\
&{{}\tilde Q_t^n}^{3,1} \hspace{-.3cm}:= -\mathbf Q^{2,1}_t- (\theta^n_t)^\intercal \mathbf R_t^{2,1} \bar \theta^n_t, 
{{}\tilde Q_t^n}^{3,3} \hspace{-.3cm}:=-\mathbf Q_t^{2,2} - (\bar \theta^n_t)^\intercal \mathbf R_t^{2,2}  \bar \theta^n_t,
\end{align}
where $\mathbf Q_t$ and $\mathbf R_t$ are given by~\eqref{eq:def-barQ}, when $a^i_n= \frac{1}{n}$. 
 \end{Lemma}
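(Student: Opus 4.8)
The plan is to reduce the performance gap to a sum over time of differences of \emph{gauge-transformed} per-step costs and to show that everything except the deep-state/deep-action blocks either cancels outright or vanishes in expectation. First I would invoke Lemma~\ref{lemma:breve_equality}, which guarantees $\Delta x^j_t=\Delta\hat x^j_t$ and $\Delta u^j_t=\Delta\hat u^j_t$ for \emph{every} player $j\in\mathbb N_n$ (not merely $j=i$), so in particular $e^i_t=\zeta^i_t$. Writing $\Delta J^i_P(\hat{\mathbf g}_n,\mathbf g^\ast_n)_{t_0}=\sum_{t=t_0}^T\big(\Exp{c^i_t \text{ under }\hat{\mathbf g}_n}-\Exp{c^i_t\text{ under }\mathbf g^\ast_n}\big)$ and expressing each per-step cost through the transformed form~\eqref{eq:cost-breve-bar}, the two lines of~\eqref{eq:cost-breve-bar} that depend only on the deviations $\{\Delta x^j_t,\Delta u^j_t\}_j$ — namely the weighted $G^x_t,G^u_t$ sum and the $\tfrac{\alpha^i_n}{1-\alpha^i_n}$ self-correction — are identical under both strategies and cancel termwise. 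Hence only the two block-quadratic forms built from $\mathbf Q^{1/n}_t$ and $\mathbf R^{1/n}_t$ survive.

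Next I would evaluate those surviving blocks. Averaging the linear policy~\eqref{eq:suppose_strategy} gives $\bar u_t=\bar\theta^n_t\bar x_t$, and averaging~\eqref{eq:action-ns} gives $\hat{\bar u}_t=\theta^n_t\hat{\bar x}_t+(\bar\theta^n_t-\theta^n_t)z^n_t$; substituting $\bar x_t=z^n_t+\zeta_t$ and $\hat{\bar x}_t=z^n_t+e_t$ yields the clean decomposition $\bar u_t=\bar\theta^n_t z^n_t+\bar\theta^n_t\zeta_t$ and $\hat{\bar u}_t=\bar\theta^n_t z^n_t+\theta^n_t e_t$, so the deterministic ($z^n_t$) parts of the deep action agree. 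Expanding the $\mathbf Q^{1/n}_t$ form using $\Delta\hat x^i_t=\Delta x^i_t=e^i_t$ together with $\hat{\bar x}_t-\bar x_t=e_t-\zeta_t$, and the $\mathbf R^{1/n}_t$ form using $\hat{\bar u}_t-\bar u_t=\theta^n_t e_t-\bar\theta^n_t\zeta_t$, each contribution splits into (i) a purely quadratic fluctuation term, (ii) a $z^n_t$-only term common to both strategies that cancels, and (iii) a bilinear term that is linear in $e_t$ or $\zeta_t$ with the deterministic factor $z^n_t$.

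I would then dispose of the bilinear terms by taking expectations. The recursions~\eqref{eq:dynamics_error_relative} for $(e^i_t,e_t,\zeta_t)$ are driven by the zero-mean increments $(\Delta w^i_t,\bar w_t,\bar w_t)$ and started from $e_1=\zeta_1=\bar x_1-\Exp{\bar x_1}$; since $z^n_t$ is deterministic, linearity forces $\Exp{e_t}=\Exp{\zeta_t}=\mathbf 0$, so every term of the form $z^n_t{}^\intercal(\cdot)e_t$ or $z^n_t{}^\intercal(\cdot)\zeta_t$ drops out. Collecting the remaining quadratic fluctuation terms produces exactly the stated blocks: the $e^i_t$ self-terms cancel, giving ${{}\tilde Q^n_t}^{1,1}=\Zero$; the $e^i_t$–$e_t$ and $e^i_t$–$\zeta_t$ couplings give $\mathbf Q^{1,2}_t+(\theta^n_t)^\intercal\mathbf R^{1,2}_t\theta^n_t$ and $-\mathbf Q^{1,2}_t-(\theta^n_t)^\intercal\mathbf R^{1,2}_t\bar\theta^n_t$ (and their transposed counterparts in row/column~$1$); and the $e_t$ and $\zeta_t$ self-terms give the stated $(2,2)$ and $(3,3)$ blocks $\mathbf Q^{2,2}_t+(\theta^n_t)^\intercal\mathbf R^{2,2}_t\theta^n_t$ and $-\mathbf Q^{2,2}_t-(\bar\theta^n_t)^\intercal\mathbf R^{2,2}_t\bar\theta^n_t$. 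Crucially, $e_t$ originates from the NS (SAPDE) rollout while $\zeta_t$ originates from the DSS (SPNE) rollout, so the difference never contains a product of $e_t$ with $\zeta_t$; this is precisely what gives ${{}\tilde Q^n_t}^{2,3}={{}\tilde Q^n_t}^{3,2}=\Zero$.

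The main obstacle is bookkeeping rather than conceptual: one must carefully justify the termwise cancellation of the $\Delta$-only part (which hinges on Lemma~\ref{lemma:breve_equality} holding \emph{simultaneously for all players}) and the vanishing of the $z^n_t$-linear cross terms under expectation. A minor subtlety worth flagging is that the quadratic form in~\eqref{eq:cost_relative_error} is sensitive only to the symmetric part of $\tilde Q^n_t$; since $\mathbf Q^{1,2}_t=Q_t+S^x_t$ and $\mathbf R^{1,2}_t=R_t+S^u_t$ are themselves symmetric, the off-diagonal blocks ${{}\tilde Q^n_t}^{1,3}$ and ${{}\tilde Q^n_t}^{3,1}$ need only agree after symmetrization, so the representation displayed in the statement is one admissible choice that reproduces the correct value of $\Exp{[e^i_t\ e_t\ \zeta_t]^\intercal\tilde Q^n_t[e^i_t\ e_t\ \zeta_t]}$.
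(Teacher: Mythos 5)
Your proposal is correct and follows essentially the same route as the paper's proof: cancel the $G^x_t,G^u_t$ terms via Lemma~\ref{lemma:breve_equality}, substitute $\bar x_t=z^n_t+\zeta_t$, $\hat{\bar x}_t=z^n_t+e_t$, $\bar u_t=\bar\theta^n_t(z^n_t+\zeta_t)$, $\hat{\bar u}_t=\bar\theta^n_t z^n_t+\theta^n_t e_t$, kill the $z^n_t$-linear cross terms using $\Exp{e_t}=\Exp{\zeta_t}=\mathbf 0$, and read off the blocks of $\tilde Q^n_t$. Your remark about the off-diagonal blocks only needing to agree after symmetrization is a fair (and correct) observation that the paper leaves implicit.
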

 \begin{proof}
 The proof follows from  Lemma~\ref{lemma:breve_equality} and the fact that $\Exp{e_t}=\Exp{\zeta_t}=\mathbf{0}_{d_x \times 1}, \forall t \in \mathbb{N}_T$, which is a consequence of  the linear dynamics of the relative distances and zero-mean noises.  For more details, the reader is referred to Appendix~\ref{sec:proof_lemma:relative_dynamics_1}.
 \end{proof}
Define matrices $H^{x,i}_{t}$ and $H^{w,i}_t$, $i \in \mathbb{N}_n$, $t \in \mathbb{N}_T$ such that:
\begin{equation}
\Compress
H^{x,i}_t:=\Big[ 
\begin{array}{cc}
  \VAR(x^i_t-\bar x_t) &   \Exp{(x^i_t- \bar x_t)\bar x_t^\intercal} \\
 \mathbf{1}_{2\times 2} \otimes  \Exp{(x^i_t- \bar x_t)\bar x_t^\intercal} &  \mathbf{1}_{2\times 2} \otimes \VAR(\bar x_t)
\end{array}
\Big],
\end{equation}
\begin{equation}\label{eq:hw} 
\Compress
H^{w,i}_t:=\Big[  
\begin{array}{cc}
\VAR(w^i_t-\bar w_t) &  \Exp{(w^i_t- \bar w_t)\bar w_t^\intercal}  \\
 \mathbf{1}_{2\times 2} \otimes \Exp{(w^i_t- \bar w_t)\bar w_t^\intercal} &  \mathbf{1}_{2\times 2} \otimes\VAR(\bar w_t) 
\end{array}
 \Big],
\end{equation}
where  $H^x_t:=H^{x,i}_t$  and  $H^w_t:=H^{w,i}_t$, $\forall i \in \mathbb{N}_n$,  for the case when  the  primitive random variables are  i.i.d.  
\begin{Theorem}[A non-asymptotic result for homogeneous weights]\label{thm:delta_j_correlated}
Let Assumption~\ref{ass:invertible} hold. The performance  gap of  the NS strategy~\eqref{eq:action-ns}, described in Definition~\ref{def:performance_gap},  can be computed  by a Lyapunov equation.  In particular, the following  holds given  matrices  $\tilde A^n_t$ in Lemma~\ref{lemma:breve_equality}  and $\tilde Q^n_t$ in Lemma~\ref{lemma:relative_dynamics_1}:
\begin{equation}\label{eq:delta_expansion}
\Delta J_P(\hat{\mathbf g}_n,\mathbf g^\ast_n)_{t_0}= \max_{i \in \mathbb{N}_n} \big(\TR(H^{x,i}_{t_0} \tilde M^n_1) + \sum_{t=t_0}^{T-1} \TR(H^{w,i}_t \tilde M^n_{t+1})\big), 
\end{equation} 
where
\begin{equation}\label{eq:lyapunov-finite}
\begin{cases}
\tilde M^n_{t}={{}\tilde A_t^n}^\intercal \tilde M^n_{t+1} \tilde A^n_t + \tilde Q_t^n, \quad  t_0\leq t \leq T,\\
\tilde M^n_{T+1} = \mathbf{0}_{3 d_x \times 3 d_x}.
\end{cases}
\end{equation}
\end{Theorem}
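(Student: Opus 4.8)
The plan is to read the statement as a standard evaluation of a quadratic cost along a linear, white-noise-driven recursion, and to carry it out with a backward Lyapunov argument. By Lemma~\ref{lemma:relative_dynamics_1} the per-player gap is $\Delta J^i_{P}(\hat{\mathbf g}_n,\mathbf g^\ast_n)_{t_0}=\Exp{\sum_{t=t_0}^T (\xi^i_t)^\intercal \tilde Q^n_t \xi^i_t}$, where I abbreviate $\xi^i_t:=\VEC(e^i_t,e_t,\zeta_t)$; by Lemma~\ref{lemma:breve_equality} this vector obeys $\xi^i_{t+1}=\tilde A^n_t\xi^i_t+\nu^i_t$ with $\nu^i_t:=\VEC(\Delta w^i_t,\bar w_t,\bar w_t)$. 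The driving term $\nu^i_t$ is zero-mean and white (independent over time), and since $\xi^i_t$ is a function of the primitive variables up to time $t-1$ only, $\nu^i_t$ is independent of $\xi^i_t$; this is exactly the structure that makes a clean Lyapunov evaluation possible, while cross-player correlations are retained inside the second-moment matrices.

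The key step is to introduce the matrices $\tilde M^n_t$ defined by the backward recursion~\eqref{eq:lyapunov-finite} and to prove, by backward induction on $t$ from $T+1$ down to $t_0$, the identity
\begin{equation}
\Exp{\sum_{s=t}^T (\xi^i_s)^\intercal \tilde Q^n_s \xi^i_s}=\Exp{(\xi^i_t)^\intercal \tilde M^n_t \xi^i_t}+\sum_{s=t}^{T-1}\TR\big(\Exp{\nu^i_s(\nu^i_s)^\intercal}\,\tilde M^n_{s+1}\big).
\end{equation}
The base case $t=T+1$ holds because $\tilde M^n_{T+1}=\mathbf 0_{3d_x\times 3d_x}$. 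For the inductive step I would substitute the dynamics into $\Exp{(\xi^i_{t+1})^\intercal\tilde M^n_{t+1}\xi^i_{t+1}}$ and expand the quadratic form: the two cross terms vanish because $\nu^i_t$ is zero-mean and independent of $\xi^i_t$, the pure-noise term becomes $\TR(\Exp{\nu^i_t(\nu^i_t)^\intercal}\tilde M^n_{t+1})$ via $\Exp{v^\intercal M v}=\TR(M\Exp{vv^\intercal})$, and the surviving quadratic $(\xi^i_t)^\intercal(\tilde Q^n_t+(\tilde A^n_t)^\intercal\tilde M^n_{t+1}\tilde A^n_t)\xi^i_t$ collapses to $(\xi^i_t)^\intercal\tilde M^n_t\xi^i_t$ by the very recursion~\eqref{eq:lyapunov-finite}.

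Evaluating the identity at $t=t_0$ and turning the leading quadratic form into a trace yields $\Delta J^i_P=\TR(\Exp{\xi^i_{t_0}(\xi^i_{t_0})^\intercal}\tilde M^n_{t_0})+\sum_{t=t_0}^{T-1}\TR(\Exp{\nu^i_t(\nu^i_t)^\intercal}\tilde M^n_{t+1})$, which is~\eqref{eq:delta_expansion} for a single player (with $\tilde M^n_{t_0}$ the matrix produced at the lower summation limit). It then remains to identify the two second-moment matrices with $H^{x,i}_{t_0}$ and $H^{w,i}_t$. For the noise this is immediate from the definition~\eqref{eq:hw}: the second and third blocks of $\nu^i_t$ are both $\bar w_t$, so $\Exp{\nu^i_t(\nu^i_t)^\intercal}$ has exactly the repeated, $\mathbf 1_{2\times 2}$-Kronecker block pattern of $H^{w,i}_t$, with the off-diagonal blocks equal to the cross-moment $\Exp{(w^i_t-\bar w_t)\bar w_t^\intercal}$. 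Using that the relative distances are zero-mean, i.e. $\Exp{e_t}=\Exp{\zeta_t}=\mathbf 0$ as in Lemma~\ref{lemma:relative_dynamics_1}, and likewise $\Exp{e^i_t}=\mathbf 0$ under the identical-mean hypothesis, the second moments coincide with the auto-covariances and cross-moments appearing in $H^{x,i}_{t_0}$, so $\Exp{\xi^i_{t_0}(\xi^i_{t_0})^\intercal}=H^{x,i}_{t_0}$. Finally, taking $\max_{i\in\mathbb{N}_n}$ as prescribed by Definition~\ref{def:performance_gap} gives the stated formula.

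I expect the only real work to lie in this last identification rather than in any analysis: one must check that the repeated $\bar x$ and $\bar w$ components generate precisely the $\mathbf 1_{2\times 2}$-block structure of $H^{x,i}$ and $H^{w,i}$, that the cross-moments land in the correct off-diagonal positions, and that the zero-mean property lets one pass freely between raw second moments and the auto-covariance notation $\VAR(\cdot)$ used in those definitions. Everything else, namely the vanishing of the cross terms, the trace identity, and the absorption into~\eqref{eq:lyapunov-finite}, is the standard discrete-time Lyapunov recursion for a linear system driven by zero-mean white noise.
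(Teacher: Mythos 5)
Your proof is correct and follows essentially the same route as the paper's: the paper's own argument is a two-sentence appeal to the standard fact that the expected quadratic cost of an uncontrolled linear system driven by zero-mean white noise is given by a backward Lyapunov recursion, and you simply carry out that computation explicitly (backward induction, vanishing cross terms, trace identity, and the identification of the second-moment matrices with $H^{x,i}_{t_0}$ and $H^{w,i}_t$). Your reading of $\tilde M^n_1$ as $\tilde M^n_{t_0}$ in~\eqref{eq:delta_expansion} is the right one, since the recursion~\eqref{eq:lyapunov-finite} only defines $\tilde M^n_t$ for $t_0\leq t\leq T+1$.
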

\begin{proof}
From Lemmas~\ref{lemma:breve_equality} and~\ref{lemma:relative_dynamics_1}, $\Delta J^i_P(\hat{\mathbf g}_n,\mathbf g^\ast_n)_{t_0}$,  $i \in \mathbb{N}_n$,   is  a quadratic function of the relative distances, and  the relative distances have  linear dynamics. Therefore,  this may be viewed as an uncontrolled linear quadratic system where the total expected cost can be expressed in terms of the auto-covariance matrices of  the initial states  and  local noises  (i.e., $H^{x,i}_{t_0}$ and $H^{w,i}_t$, $t_0\leq t \leq T $) and  the Lyapunov equation~\eqref{eq:lyapunov-finite}.
\end{proof}
\subsection{Solution of Problem~\ref{prob:ns}}  
It is desired now to show that  the proposed strategies, given by~\eqref{eq:action-ns} and~\eqref{eq:action-ns-inf},  are  solutions for Problem~\ref{prob:ns}.

\begin{Theorem}\label{thm:NS-optimality}
Let Assumptions~\ref{ass:invertible_infinite_pop}  and~\ref{assump:iid} hold for the special case of  homogeneous weights for  any  $ n \geq n_0$, $n_0 \in \mathbb{N}$. The  following holds for Problem~\ref{prob:ns}.
\begin{enumerate}
\item The NS strategy~\eqref{eq:action-ns} is  a sequential asymptotic reciprocal equilibrium such that
\begin{equation}\label{eq:epsilon_finite}
\varepsilon_{t_0} (n):= \Delta J_P(\hat{\mathbf g}_n,\mathbf g^\ast_n)_{t_0} \in \mathcal{O}(\frac{1}{n}), \quad t_0 \in \mathbb{N}_T.
\end{equation}
\item Let  also Assumption~\ref{ass:rationality_gap}  hold. Strategy~\eqref{eq:action-ns} is a sequential asymptotic Nash equilibrium  such that
$
\bar \varepsilon_{t_0}(n):= \Delta J_P(\hat{\mathbf g}_n,\mathbf g^\ast_n)_{t_0}+  \Delta J_R(\hat{\mathbf g}_n,\mathbf g^\ast_n)_{t_0}$, $t_0 \in \mathbb{N}_T$, 
where $\lim_{n \rightarrow \infty} \bar \varepsilon_{t_0}(n)=0$.
\end{enumerate} 

\end{Theorem}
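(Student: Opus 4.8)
The plan is to obtain both parts by assembling the machinery already developed, treating the performance and rationality gaps of Definitions~\ref{def:performance_gap} and~\ref{def:rationality_gap} as the building blocks. I first note that the hypotheses are self-consistent: for homogeneous weights $\alpha^i_n=1/n$, Assumption~\ref{ass:invertible_infinite_pop} forces the model to be population-size-independent (Assumption~\ref{assump:independent_n}) and, once $n$ is large enough that $1/n$ lies in the admissible neighbourhood, makes $F_t(\tfrac1n),\bar F_t(\tfrac1n)$ invertible with $(1-\tfrac1n)F_t(\tfrac1n)+\tfrac1n\bar F_t(\tfrac1n)\succ 0$; that is, Assumption~\ref{ass:invertible} holds for all $n\ge n_0$. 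By Proposition~\ref{cor:sufficient_7} this yields Assumption~\ref{ass:existence_finite}, so Theorem~\ref{thm:aux}, Theorem~\ref{thm:delta_j_correlated}, and Lemmas~\ref{lemma:PG_asymptotic} and~\ref{lemma:rationality_gap} are all in force.

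For Part~1 (SARE), the claim is almost immediate from the definitions. By Definition~\ref{def:performance_gap}, $\Delta J_P(\hat{\mathbf g}_n,\mathbf g^\ast_n)_{t_0}=\max_i|\Delta J^i_P(\hat{\mathbf g}_n,\mathbf g^\ast_n)_{t_0}|$, and $|\Delta J^i_P|$ is exactly the quantity $|J^i_n(\hat{\mathbf g}^i_n,\hat{\mathbf g}^{-i}_n)_{t_0}-J^i_n(\mathbf g^{\ast,i}_n,\mathbf g^{\ast,-i}_n)_{t_0}|$ appearing in the SARE inequality of Definition~\ref{def:pDSS}; hence setting $\varepsilon_{t_0}(n):=\Delta J_P(\hat{\mathbf g}_n,\mathbf g^\ast_n)_{t_0}$ satisfies that inequality for every player simultaneously, and Lemma~\ref{lemma:PG_asymptotic} already gives $\varepsilon_{t_0}(n)\to 0$. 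To sharpen this to the rate $\mathcal{O}(1/n)$ I would use the explicit Lyapunov representation~\eqref{eq:delta_expansion}. The key structural fact is that the $(1,1)$ block of $\tilde M^n_t$ vanishes: since $\tilde A^n_t$ is block diagonal (Lemma~\ref{lemma:breve_equality}) and ${{}\tilde Q_t^n}^{1,1}=\Zero$ (Lemma~\ref{lemma:relative_dynamics_1}), the recursion~\eqref{eq:lyapunov-finite} with zero terminal condition forces ${{}\tilde M^n_t}^{1,1}\equiv\mathbf 0$. Under Assumption~\ref{assump:iid} the aggregate covariances scale as $\VAR(\bar x_t),\VAR(\bar w_t)=\mathcal{O}(1/n)$ (variances of averages of $n$ independent terms with uniformly bounded covariances, propagated through bounded dynamics over the finite horizon so the bound holds for every $t_0$), while the cross-covariances $\Exp{(x^i_t-\bar x_t)\bar x_t^\intercal}$ and $\Exp{(w^i_t-\bar w_t)\bar w_t^\intercal}$ are likewise $\mathcal{O}(1/n)$. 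Thus in $H^{x,i}_{t_0}$ and $H^{w,i}_t$ the only $\mathcal{O}(1)$ block is the $(1,1)$ block, which in $\TR(H\tilde M^n)$ pairs solely with ${{}\tilde M^n}^{1,1}=\mathbf 0$; every surviving contribution is $\mathcal{O}(1/n)$, and $\tilde M^n_t$ is uniformly bounded because the data are $n$-independent and $\theta^n_t,\bar\theta^n_t$ are uniformly bounded (Assumption~\ref{ass:existence_finite}). Summing over $t$ yields $\Delta J_P(\hat{\mathbf g}_n,\mathbf g^\ast_n)_{t_0}\in\mathcal{O}(1/n)$.

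For Part~2 (SANE), I would bridge from the performance bound to the deviation bound through the subgame perfect equilibrium by telescoping. For an arbitrary unilateral deviation $\mathbf g^i\in\mathcal G^i\subseteq\GC$, I write $J^i_n(\hat{\mathbf g}^i_n,\hat{\mathbf g}^{-i}_n)_{t_0}-J^i_n(\mathbf g^i,\hat{\mathbf g}^{-i}_n)_{t_0}$ as the sum of three increments: (i)\ $J^i_n(\hat{\mathbf g}^i_n,\hat{\mathbf g}^{-i}_n)_{t_0}-J^i_n(\mathbf g^{\ast,i}_n,\mathbf g^{\ast,-i}_n)_{t_0}=\Delta J^i_P$; (ii)\ $J^i_n(\mathbf g^{\ast,i}_n,\mathbf g^{\ast,-i}_n)_{t_0}-J^i_n(\mathbf g^i,\mathbf g^{\ast,-i}_n)_{t_0}$, which is $\le 0$ by the best-response inequality of Definition~\ref{def:DSS} (Theorem~\ref{thm:aux}), since $\mathbf g^i\in\GC$; and (iii)\ $J^i_n(\mathbf g^i,\mathbf g^{\ast,-i}_n)_{t_0}-J^i_n(\mathbf g^i,\hat{\mathbf g}^{-i}_n)_{t_0}=-\Delta J^i_R(\mathbf g^i,\hat{\mathbf g}_n,\mathbf g^\ast_n)_{t_0}$. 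Discarding the nonpositive increment (ii) and bounding (i) and (iii) by their suprema gives, uniformly over $\mathbf g^i$ and $i$, the estimate $J^i_n(\hat{\mathbf g}^i_n,\hat{\mathbf g}^{-i}_n)_{t_0}-J^i_n(\mathbf g^i,\hat{\mathbf g}^{-i}_n)_{t_0}\le \Delta J_P(\hat{\mathbf g}_n,\mathbf g^\ast_n)_{t_0}+\Delta J_R(\hat{\mathbf g}_n,\mathbf g^\ast_n)_{t_0}=:\bar\varepsilon_{t_0}(n)$, which is precisely the SANE inequality of Definition~\ref{def:pDSS2}. Finally $\bar\varepsilon_{t_0}(n)\to 0$, because $\Delta J_P\to 0$ by Part~1 and $\Delta J_R\to 0$ by Lemma~\ref{lemma:rationality_gap}, which is exactly where Assumption~\ref{ass:rationality_gap} (decoupled dynamics or continuous unilateral deviations) is needed.

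I expect the genuine obstacle to be the quantitative $\mathcal{O}(1/n)$ step in Part~1: one must exploit the vanishing $(1,1)$ Lyapunov block and the $1/n$ covariance scaling of the aggregates \emph{simultaneously}, taking care that the individual-deviation component $e^i_t=\zeta^i_t$, whose covariance stays $\mathcal{O}(1)$, never multiplies an order-one factor in the trace. By contrast, Part~2 is a routine telescoping once the two gaps and the subgame perfect optimality are available; the only points requiring attention are that the deviation class $\mathcal G^i$ sits inside $\GC$ so that increment (ii) is nonpositive, and that Assumption~\ref{ass:rationality_gap} is what makes Lemma~\ref{lemma:rationality_gap} deliver the vanishing rationality gap.
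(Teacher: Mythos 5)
Your proposal is correct and takes essentially the same route as the paper's own proof: Part~1 is established exactly as in the paper via the vanishing $(1,1)$ block of $\tilde M^n_t$ (the paper isolates this as a small auxiliary lemma) combined with the $\mathcal{O}(1/n)$ scaling of the aggregate covariance blocks of $H^{x,i}_{t_0}$ and $H^{w,i}_t$ under Assumption~\ref{assump:iid}, and Part~2 is the same triangle-inequality/telescoping argument through the SPNE best-response inequality, with Lemma~\ref{lemma:rationality_gap} supplying the vanishing rationality gap. Your explicit three-increment decomposition in Part~2 and your remark on propagating the covariance bounds to $t_0>1$ are just slightly more spelled-out versions of what the paper does implicitly.
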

\begin{proof}
The proof is presented in Appendix~\ref{sec:proof_thm:NS-optimality}.
\end{proof} 

\begin{Theorem}\label{cor:ns-infinite}
Let  Assumptions~\ref{ass:invertible_infinite_pop} and~\ref{assump:iid} hold for  any  $ n \geq n_0$, $n_0 \in \mathbb{N}$.   The following holds for Problem~\ref{prob:ns}.
\begin{enumerate}
\item For the special case of homogeneous weights,  the NS strategy~\eqref{eq:action-ns-inf} is a sequential asymptotic reciprocal equilibrium   such that 
$
\varepsilon_{t_0} (n):= \Delta J_P(\hat{\mathbf g}_\infty,\mathbf g^\ast_n)_{t_0}, t_0 \in \mathbb{N}_T,
$
where $\lim_{n \rightarrow \infty}  \varepsilon_{t_0}(n)=0$.
\item Let also  Assumption~\ref{ass:rationality_gap} hold. For asymptotically vanishing weights, strategy~\eqref{eq:action-ns-inf} is a sequential asymptotic Nash equilibrium, where $\lim_{n \rightarrow \infty} \bar \varepsilon_{t_0}(n)=0$, $t_0 \in \mathbb{N}_T$.
\end{enumerate} 
\end{Theorem}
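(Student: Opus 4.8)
The plan is to reduce the claim to the two asymptotic bounds already proved in Lemmas~\ref{lemma:PG_asymptotic} and~\ref{lemma:rationality_gap}, applied to the population-size-independent strategy $\hat{\mathbf g}_\infty$ in~\eqref{eq:action-ns-inf}, and then to read off the two solution concepts from Definitions~\ref{def:pDSS} and~\ref{def:pDSS2}. Since those lemmas are stated under Assumptions~\ref{assump:independent_n},~\ref{assump:iid} and~\ref{ass:existence_finite} (plus~\ref{ass:rationality_gap} for the rationality gap), the first and genuinely substantive step is to certify that the present hypotheses imply that package; everything afterward is a short telescoping argument that mirrors the proof of Theorem~\ref{thm:NS-optimality}, but with $\hat{\mathbf g}_\infty$ in place of $\hat{\mathbf g}_n$.

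First I would verify Assumption~\ref{ass:existence_finite} from Assumption~\ref{ass:invertible_infinite_pop} (note that~\ref{assump:independent_n} is subsumed in~\ref{ass:invertible_infinite_pop}, while~\ref{assump:iid} is assumed directly). For the homogeneous case of part~1, $\alpha=1/n$ lies in $[-\gamma_{\text{max}}/n_0,\gamma_{\text{max}}/n_0]$ for every $n\geq n_0$, so evaluating~\ref{ass:invertible_infinite_pop} at $\alpha=1/n$ reproduces exactly the invertibility of $F_t(1/n),\bar F_t(1/n)$ and the positive definiteness of $(1-\tfrac1n)F_t(\tfrac1n)+\tfrac1n\bar F_t(\tfrac1n)$ required by Assumption~\ref{ass:invertible}; Proposition~\ref{cor:sufficient_7} then yields~\ref{ass:existence_finite}, with $\theta^n_t,\bar\theta^n_t$ continuous and uniformly bounded through~\eqref{eq:riccati-bar-m}--\eqref{eq:breve-f}. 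For the asymptotically vanishing case of part~2, I would reuse the finite-population construction inside the proof of Theorem~\ref{thm:aux} (case~2): the uniform bound $F^{-1}_t(\alpha),\bar F^{-1}_t(\alpha)\preceq C$ on $\alpha\in\delta_{n_0}$ together with the continuity of these inverses in $\alpha$ guarantees that for $n\geq n_0$ the best-response/consistency system~\eqref{eq:proof_aux_final} admits a unique solution and that the associated gains converge to $\theta^\infty_t=\theta_t(0)$ and $\bar\theta^\infty_t=\bar\theta_t(0)$ as $n\to\infty$, which is precisely Assumption~\ref{ass:existence_finite}.

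With the assumption package in force, part~1 is immediate: Lemma~\ref{lemma:PG_asymptotic} gives $\lim_{n\to\infty}\Delta J_P(\hat{\mathbf g}_\infty,\mathbf g^\ast_n)_{t_0}=0$, and since $\Delta J_P$ (Definition~\ref{def:performance_gap}) is exactly the player-wise maximum of $|J^i_n(\hat{\mathbf g}^i,\hat{\mathbf g}^{-i})_{t_0}-J^i_n(\mathbf g^{\ast,i}_n,\mathbf g^{\ast,-i}_n)_{t_0}|$, the choice $\varepsilon_{t_0}(n):=\Delta J_P(\hat{\mathbf g}_\infty,\mathbf g^\ast_n)_{t_0}$ satisfies Definition~\ref{def:pDSS}. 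For part~2 I would bound the gain from any admissible unilateral deviation $\mathbf g^i$ by the telescoping identity
\begin{align}
&J^i_n(\hat{\mathbf g}^i,\hat{\mathbf g}^{-i})_{t_0}-J^i_n(\mathbf g^i,\hat{\mathbf g}^{-i})_{t_0}
=\big[J^i_n(\hat{\mathbf g}^i,\hat{\mathbf g}^{-i})_{t_0}-J^i_n(\mathbf g^{\ast,i}_n,\mathbf g^{\ast,-i}_n)_{t_0}\big]\nonumber\\
&\quad+\big[J^i_n(\mathbf g^{\ast,i}_n,\mathbf g^{\ast,-i}_n)_{t_0}-J^i_n(\mathbf g^i,\mathbf g^{\ast,-i}_n)_{t_0}\big]+\big[J^i_n(\mathbf g^i,\mathbf g^{\ast,-i}_n)_{t_0}-J^i_n(\mathbf g^i,\hat{\mathbf g}^{-i})_{t_0}\big].
\end{align}
The middle bracket is nonpositive because $\mathbf g^\ast_n$ is a subgame perfect Nash equilibrium (Definition~\ref{def:DSS}) and any no-sharing deviation is an admissible $\GC$ strategy; the first bracket is at most $\Delta J_P(\hat{\mathbf g}_\infty,\mathbf g^\ast_n)_{t_0}$ and the third at most $\Delta J_R(\hat{\mathbf g}_\infty,\mathbf g^\ast_n)_{t_0}$ (Definition~\ref{def:rationality_gap}). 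Hence $\bar\varepsilon_{t_0}(n):=\Delta J_P(\hat{\mathbf g}_\infty,\mathbf g^\ast_n)_{t_0}+\Delta J_R(\hat{\mathbf g}_\infty,\mathbf g^\ast_n)_{t_0}$ validates Definition~\ref{def:pDSS2}, and both summands vanish as $n\to\infty$ by Lemmas~\ref{lemma:PG_asymptotic} and~\ref{lemma:rationality_gap}.

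The main obstacle is not the telescoping step but the verification that Assumption~\ref{ass:invertible_infinite_pop} delivers Assumption~\ref{ass:existence_finite} for genuinely heterogeneous vanishing weights, namely establishing existence, uniqueness, and the convergence $\theta^n_t\to\theta^\infty_t$, $\bar\theta^n_t\to\bar\theta^\infty_t$ in a form strong enough to feed the lemmas. This rests on the uniform boundedness and continuity of $F^{-1}_t(\cdot),\bar F^{-1}_t(\cdot)$ near $\alpha=0$ supplied by~\ref{ass:invertible_infinite_pop}, exactly as exploited in the limit exchange of Theorem~\ref{thm:aux}; once that is secured, the remainder is bookkeeping against the solution-concept definitions.
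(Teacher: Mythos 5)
Your part~1 is correct and follows the paper's route: homogeneous weights let you pass from Assumption~\ref{ass:invertible_infinite_pop} to Assumption~\ref{ass:invertible} and hence, via Proposition~\ref{cor:sufficient_7}, to Assumption~\ref{ass:existence_finite}, after which Lemma~\ref{lemma:PG_asymptotic} and Definition~\ref{def:pDSS} finish the job.

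Part~2, however, has a genuine gap. Your telescoping identity is anchored at the \emph{finite-population} SPNE $\mathbf g^\ast_n$, so it needs $\mathbf g^\ast_n$ to exist, to be unique, and to take the common-gain form~\eqref{eq:suppose_strategy} for heterogeneous asymptotically vanishing weights $\alpha^i_n=\gamma^i/n$ --- and your claimed derivation of Assumption~\ref{ass:existence_finite} from Assumption~\ref{ass:invertible_infinite_pop} in that case does not go through. The consistency system~\eqref{eq:proof_aux_final} involves player-dependent matrices $F_t(\alpha^i_n)$, $\bar F_t(\alpha^i_n)$; the weighted-averaging step that decouples $\Delta u^i_t$ from $\bar u_t$ only works because $\sum_i\alpha^i_n F_t(\alpha^i_n)\Delta u^i_t$ collapses to $F_t(\cdot)\sum_i\alpha^i_n\Delta u^i_t=0$, which holds exactly for homogeneous weights and only \emph{in the limit} $n\to\infty$ for heterogeneous ones (this is precisely case~2 of the proof of Theorem~\ref{thm:aux}, which asserts the SPNE only as $n\to\infty$). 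Proposition~\ref{cor:sufficient_7} accordingly certifies Assumption~\ref{ass:existence_finite} only for homogeneous weights and for the social cost, not for general vanishing weights, and the paper stresses after Theorem~\ref{cor:ns-infinite} that SANE here is established ``irrespective of whether or not the finite-population game has a solution'' (cf.\ the counterexamples in Section~\ref{sec:special3}). The paper's proof avoids your obstruction by anchoring the triangle inequality at the \emph{infinite-population} SPNE $\mathbf g^\ast_\infty$ (whose optimality against unilateral deviations is supplied by the second bullet of Theorem~\ref{thm:aux} under Assumption~\ref{ass:invertible_infinite_pop}) and then controlling three terms: the continuity gap $|J^i_\infty(\mathbf g^i,\mathbf g^{\ast,-i}_\infty)-J^i_n(\mathbf g^i,\mathbf g^{\ast,-i}_\infty)|$, the almost-sure coincidence of $\hat{\mathbf g}_\infty$ with $\mathbf g^\ast_\infty$ by the strong law of large numbers, and a rationality gap measured relative to $\mathbf g^\ast_\infty$ rather than $\mathbf g^\ast_n$ (re-running the argument of Lemma~\ref{lemma:rationality_gap} with $\hat{\mathbf g}_\infty,\mathbf g^\ast_\infty$ in place of $\hat{\mathbf g}_n,\mathbf g^\ast_n$, since the lemma as stated also presupposes Assumption~\ref{ass:existence_finite}). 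To repair your argument you would either have to add Assumption~\ref{ass:existence_finite} as a hypothesis --- which weakens the theorem --- or switch the anchor to $\mathbf g^\ast_\infty$ as the paper does.
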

\begin{proof}
The proof is presented in Appendix~\ref{sec:proof_cor:ns-infinite}.
\end{proof}
It is shown in  Theorem~\ref{cor:ns-infinite}  that  SWMFE in~\eqref{eq:action-ns-inf} is   SANE, irrespective of  whether or not  the finite-population game has a solution. This independence from the finite-population solution comes from the fact  that the infinite-population solution is not necessarily  the limit of the finite-population one.  However, if  the finite-population solution exists according to Assumption~\ref{ass:existence_finite}, one can  provide a unified form for both SAPDE and SWMFE, as described   below.
\begin{Theorem}
Let Assumptions~\ref{assump:independent_n},~\ref{assump:iid} and~\ref{ass:existence_finite} hold  for  any  $ n \geq n_0$, $n_0 \in \mathbb{N}$. The  following holds for NS strategies~\eqref{eq:action-ns} and \eqref{eq:action-ns-inf}.
\begin{enumerate}
\item They are sequential asymptotic reciprocal equilibria such that at any time $t_0 \in \mathbb{N}_T$: 
\begin{align}
\varepsilon^{\text{SAPDE}}_{t_0} (n)&:= \Delta J_P(\hat{\mathbf g}_n,\mathbf g^\ast_n)_{t_0},\\
\varepsilon^{\text{SWMFE}}_{t_0} (n)&:= \Delta J_P(\hat{\mathbf g}_\infty,\mathbf g^\ast_n)_{t_0},
\end{align}
where $\lim_{n \rightarrow \infty}  \varepsilon^{\text{SAPDE}}_{t_0}(n)=0$, $\lim_{n \rightarrow \infty}  \varepsilon^{\text{SWMFE}}_{t_0}(n)=0$.
\item Let also Assumption~\ref{ass:rationality_gap}  hold. They are  sequential asymptotic Nash equilibria  such that  at  any time $t_0 \in \mathbb{N}_T$,
\begin{align}
\bar \varepsilon^{\text{SAPDE}}_{t_0}(n)&:= \Delta J_P(\hat{\mathbf g}_n,\mathbf g^\ast_n)_{t_0}+  \Delta J_R(\hat{\mathbf g}_n,\mathbf g^\ast_n)_{t_0},\\
\bar \varepsilon^{\text{SWMFE}}_{t_0}(n)&:= \Delta J_P(\hat{\mathbf g}_\infty,\mathbf g^\ast_n)_{t_0}+  \Delta J_R(\hat{\mathbf g}_\infty,\mathbf g^\ast_n)_{t_0},\\
\end{align}
where $\lim_{n \rightarrow \infty} \bar \varepsilon^{\text{SAPDE}}_{t_0}(n)=0$, $\lim_{n \rightarrow \infty} \bar \varepsilon^{\text{SWMFE}}_{t_0}(n)=0$.
\end{enumerate} 
\end{Theorem}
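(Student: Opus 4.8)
The plan is to assemble the statement from the two asymptotic lemmas that precede it, since the performance and rationality gaps have been defined precisely so that both the reciprocal and the Nash claims reduce to the vanishing of those gaps. For the sequential asymptotic reciprocal part, I would first observe that the thresholds $\varepsilon^{\text{SAPDE}}_{t_0}(n)$ and $\varepsilon^{\text{SWMFE}}_{t_0}(n)$ are by construction exactly the performance gaps $\Delta J_P(\hat{\mathbf g}_n,\mathbf g^\ast_n)_{t_0}$ and $\Delta J_P(\hat{\mathbf g}_\infty,\mathbf g^\ast_n)_{t_0}$ of Definition~\ref{def:performance_gap}. Since by that definition the gap dominates $|\Delta J^i_P|$ for every player $i\in\mathbb{N}_n$, the defining inequality of SARE in Definition~\ref{def:pDSS} holds with these thresholds uniformly in $i$. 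It then remains only to invoke Lemma~\ref{lemma:PG_asymptotic}, whose hypotheses (Assumptions~\ref{assump:independent_n},~\ref{assump:iid} and~\ref{ass:existence_finite}) are exactly ours, applied in turn to $\hat{\mathbf g}_n$ and $\hat{\mathbf g}_\infty$, to conclude that both thresholds tend to zero as $n\to\infty$.

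For the sequential asymptotic Nash part, I would use the standard telescoping decomposition of the unilateral-deviation benefit through the finite-population SPNE $\mathbf g^\ast_n$, whose existence for $n\geq n_0$ is guaranteed by Assumption~\ref{ass:existence_finite}. For any admissible deviation $\mathbf g^i$ and either choice of $\hat{\mathbf g}\in\{\hat{\mathbf g}_n,\hat{\mathbf g}_\infty\}$, I would write
\begin{align}
&J^i_n(\hat{\mathbf g}^i,\hat{\mathbf g}^{-i})_{t_0}-J^i_n(\mathbf g^i,\hat{\mathbf g}^{-i})_{t_0}\nonumber\\
&= \big[J^i_n(\hat{\mathbf g}^i,\hat{\mathbf g}^{-i})_{t_0}-J^i_n(\mathbf g^{\ast,i}_n,\mathbf g^{\ast,-i}_n)_{t_0}\big]\nonumber\\
&\quad+\big[J^i_n(\mathbf g^{\ast,i}_n,\mathbf g^{\ast,-i}_n)_{t_0}-J^i_n(\mathbf g^i,\mathbf g^{\ast,-i}_n)_{t_0}\big]\nonumber\\
&\quad+\big[J^i_n(\mathbf g^i,\mathbf g^{\ast,-i}_n)_{t_0}-J^i_n(\mathbf g^i,\hat{\mathbf g}^{-i})_{t_0}\big].
\end{align}
The first bracket is the performance gap $\Delta J^i_P(\hat{\mathbf g},\mathbf g^\ast_n)_{t_0}$ of Definition~\ref{def:performance_gap}; the second bracket is nonpositive by the SPNE best-response inequality of Definition~\ref{def:DSS}; and the third bracket is the negative of the rationality gap $\Delta J^i_R(\mathbf g^i,\hat{\mathbf g},\mathbf g^\ast_n)_{t_0}$ of Definition~\ref{def:rationality_gap}. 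Discarding the nonpositive middle term and bounding the first and third brackets by their absolute values yields the uniform bound $\bar\varepsilon_{t_0}(n)=\Delta J_P(\hat{\mathbf g},\mathbf g^\ast_n)_{t_0}+\Delta J_R(\hat{\mathbf g},\mathbf g^\ast_n)_{t_0}$ required by Definition~\ref{def:pDSS2}; the supremum over $\mathbf g^i$ is absorbed into the rationality gap, and the maximum over $i$ into the performance gap, so a single threshold serves all players and all deviations.

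I would then close the argument by combining Lemma~\ref{lemma:PG_asymptotic} with Lemma~\ref{lemma:rationality_gap}: under the added Assumption~\ref{ass:rationality_gap}, both gaps tend to zero, so each of $\bar\varepsilon^{\text{SAPDE}}_{t_0}(n)$ and $\bar\varepsilon^{\text{SWMFE}}_{t_0}(n)$ vanishes as $n\to\infty$. The delicate point is that the entire decomposition presupposes a well-defined finite-population SPNE $\mathbf g^\ast_n$ to serve simultaneously as the pivot of the telescoping and as the source of the best-response inequality; this is exactly what Assumption~\ref{ass:existence_finite} supplies for $n\geq n_0$ (with Proposition~\ref{cor:sufficient_7} furnishing verifiable sufficient conditions), and it is what allows this unified statement to subsume Theorems~\ref{thm:NS-optimality} and~\ref{cor:ns-infinite} without re-deriving the underlying law-of-large-numbers and boundedness estimates already carried out in the two lemmas.
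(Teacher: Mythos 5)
Your proposal is correct and follows essentially the same route as the paper: part 1 is read off from Definitions~\ref{def:pDSS} and~\ref{def:performance_gap} together with Lemma~\ref{lemma:PG_asymptotic}, and part 2 is the same three-term decomposition through the finite-population SPNE (your telescoping identity is just the difference form of the chained triangle inequality used in Theorem~\ref{thm:NS-optimality}), closed by Lemmas~\ref{lemma:PG_asymptotic} and~\ref{lemma:rationality_gap}. Your explicit remark that Assumption~\ref{ass:existence_finite} is what licenses using $\mathbf g^\ast_n$ as the pivot and as the source of the best-response inequality is exactly the point the paper relies on implicitly.
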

\begin{proof}
The proof follows from Definitions~\ref{def:pDSS},~\ref{def:pDSS2},~\ref{def:performance_gap},~\ref{def:rationality_gap},  Lemmas~\ref{lemma:PG_asymptotic},~\ref{lemma:rationality_gap}, and  a triangle inequality  used in Theorem~\ref{thm:NS-optimality}.
\end{proof}
\begin{Remark}[Population-size-dependent versus population-size-independent asymptotic  equilibria]\label{remark:differences-NS}
\emph{
Although NS strategies~\eqref{eq:action-ns} and~\eqref{eq:action-ns-inf} converge to the same  (unique) Nash solution as $n \rightarrow \infty$ under Assumption~\ref{ass:existence_finite}, they  have subtle differences.
In what follows, we compare them from three different angles.
\begin{enumerate}
\item Existence condition: The existence of the finite-population NS strategy~\eqref{eq:action-ns} depends on the finite-population solution while that of the infinite-population NS strategy~\eqref{eq:action-ns-inf} depends on the infinite-population one. See two counterexamples in~Section~\ref{sec:special3} demonstrating that the finite-population game may  admit a solution  while the infinite-population  may  not and vice versa. 
\item Performance and rationality  gaps: Since strategy~\eqref{eq:action-ns} takes the number of players into account, it potentially leads to  smaller performance and rationality gaps.   For example, when the  game  is deterministic and $\bar x_1$ is known,   strategy~\eqref{eq:action-ns}  becomes  the  SPNE (where performance and rationality gaps are zero) while strategy~\eqref{eq:action-ns-inf} remains  an approximate Nash  solution, in  general. See a numerical example  in Section~\ref{sec:numerical}.
\item Complexity of analysis: The advantage of  strategy~\eqref{eq:action-ns-inf} over strategy~\eqref{eq:action-ns} is that its analysis is based  on  the infinite-population model,  which is simpler than that of the finite-population one.
\end{enumerate}}
\end{Remark}

\section{Infinite Horizon}\label{sec:infinite}

In this section, we  extend our main results  to the  infinite horizon. To this end,  it is assumed that the model described in~Section~\ref{sec:problem}  is time-homogeneous, and  hence, subscript~$t$ is dropped from the model parameters.  Given  discount factor  $\gamma \in (0,1)$, define  the following discounted cost  for player $i \in \mathbb{N}_n$:
\begin{equation}\label{eq:cost_infinite}
J^{i,\gamma}_n(\mathbf g) =(1-\gamma) \Exp{ \sum_{t=1}^\infty \gamma^{t-1} c^i(\mathbf x_t,\mathbf u_t)},
\end{equation}
where the per-step cost is given by~\eqref{eq:cost-mfs}. Define the  algebraic counterpart of the non-standard equation~\eqref{eq:riccati-bar-m} for $\alpha \in \mathcal{A}_n$  as 
 \begin{align}\label{eq:riccati-bar-m-algebraic}
  \mathbf P^\alpha&= \mathbf  Q^\alpha+ \gamma\big( \mathbf A^\intercal \mathbf P^\alpha \mathbf A+    (\boldsymbol \theta^\alpha)^\intercal \mathbf B^\intercal \mathbf P^\alpha \mathbf A \nonumber \\
  &\quad +\mathbf A^\intercal \mathbf P^\alpha \mathbf B \boldsymbol \theta^\alpha \big)+ (\boldsymbol \theta^\alpha)^\intercal (\mathbf R^\alpha+\gamma  \mathbf B^\intercal \mathbf  P^\alpha \mathbf B)\boldsymbol \theta^\alpha,
  \end{align} 
  where   $\boldsymbol \theta^\alpha=:\DIAG(\theta(\alpha), \bar \theta(\alpha))$,  $\theta(\alpha)$ and  $ \bar \theta(\alpha)$ are defined as 
\begin{align}\label{eq:breve-f-stationary}
&  \theta(\alpha):= (F(\alpha))^{-1} K(\alpha), \quad  \bar \theta^\alpha:=(\bar F(\alpha))^{-1} \bar K(\alpha), \nonumber \\
&F(\alpha):= (1-\alpha)\Big[R+ \frac{\alpha}{1-\alpha}G^u +\gamma B^\intercal {{}\mathbf P^{\alpha}}^{ 1,1} B  \Big] \nonumber  \\
&\quad + \alpha \Big[R + S^u +\gamma (B+\bar B)^\intercal {{}\mathbf P^\alpha}^{1,2} B \Big], \nonumber \\
&\bar F(\alpha):= (1-\alpha)\left[R+ S^u +\gamma B^\intercal {{}\mathbf P^\alpha}^{ 2,1} (B+\bar B)  \right]  \nonumber \\
 &+ \alpha \left[R + 2S^u+ \bar R+ G^u+\gamma (B+\bar B)^\intercal {{}\mathbf P^\alpha}^{ 2,2} (B + \bar B)  \right], \nonumber \\
&K(\alpha):= -(1-\alpha)\gamma\Big[B^\intercal {{}\mathbf P^\alpha}^{1,1} A  \Big] - \alpha \gamma \Big[(B + \bar B)^\intercal {{}\mathbf P^\alpha}^{ 1,2} A   \Big], \nonumber \\
&\bar K(\alpha):= -(1-\alpha)\gamma \left[B^\intercal {{}\mathbf P^\alpha}^{ 2,1} (A+\bar A)  \right] \nonumber  \\ 
&\hspace{.9cm}- \alpha \gamma \left[(B + \bar B)^\intercal {{}\mathbf P^\alpha}^{  2,2} (A+\bar A)   \right].
\end{align}

Also, define 
\begin{align}\label{eq:riccati-breve-m_algebraic} 
P^\alpha &:=G^x+ \gamma\big( A^\intercal P^\alpha A + (\theta^\alpha)^\intercal B^\intercal P^\alpha A \nonumber \\
&\quad +A^\intercal  P^\alpha B \theta^\alpha\big)+ (\theta^\alpha)^\intercal  (G^u + \gamma B^\intercal P^\alpha  B ) \theta^\alpha.
\end{align}

In general, it is not  straightforward  to derive  conditions under which  the solutions of equations~\eqref{eq:riccati-bar-m}  and~\eqref{eq:riccati-breve-m} converge to  a   bounded limit as $T \rightarrow \infty$. In Assumption~\ref{ass:invertible-algebraic},  we  consider  one such condition that induces this property, and  in Assumptions~\ref{ass:contractive}--\ref{ass:decoupled2}, we provide three sufficient verifiable conditions under which Assumption~\ref{ass:invertible-algebraic} is satisfied. 

\begin{Assumption}\label{ass:invertible-algebraic}
Finite-horizon solutions of the backward ordinary difference equations~\eqref{eq:riccati-bar-m}  and~\eqref{eq:riccati-breve-m} admit infinite-horizon limits as $T$ goes to infinity.  More precisely, for any $\alpha \in \mathcal{A}_n$,  and  fixed $t \in \mathbb{N}_T$, $
\lim_{T\rightarrow \infty} \gamma^{-T+t}  \mathbf P^\alpha_{T-t+1}=:\mathbf P^\alpha$  and $ 
\lim_{T \rightarrow \infty} \gamma^{-T+t}  P^\alpha_{T-t+1}=:P^\alpha$,
where  $ \mathbf P^\alpha$ and $P^\alpha$ are the solutions of the algebraic  equations~\eqref{eq:riccati-bar-m-algebraic}  and~\eqref{eq:riccati-breve-m_algebraic}, respectively.
\end{Assumption}

Let  $L_1$ denote the mapping  from  $\mathbf P^\alpha$ to $\boldsymbol \theta^\alpha$ displayed in~\eqref{eq:breve-f-stationary} (where $\boldsymbol \theta^\alpha=L_1(\mathbf P^\alpha)$),  and $L_2$ denote the mapping from $\boldsymbol \theta^\alpha$ to $\mathbf P^\alpha$ expressed in~\eqref{eq:riccati-bar-m-algebraic} (where $\mathbf P^\alpha=L_2(\boldsymbol \theta^\alpha)$). As a result, $\mathbf P^\alpha=L_2(L_1(\mathbf P^\alpha))$ is a fixed-point equation, that can be solved and studied further by various  fixed-point methods. 
\begin{Assumption}[Contractive solution]\label{ass:contractive}
Let the mapping $L_2(L_1(\boldsymbol \cdot))$ be a contraction, which implies that equations~\eqref{eq:riccati-bar-m-algebraic} and~\eqref{eq:breve-f-stationary} admit a unique   solution.\footnote{By using  a  change of variable proposed   in the proof of Theorem~\ref{thm:mfs-inf_inf}, one can show that the  finite-horizon (time-varying) equations~\eqref{eq:riccati-bar-m} and~\eqref{eq:riccati-breve-m} can be rewritten as two finite-horizon (time-invariant) equations in the form of~\eqref{eq:riccati-bar-m-algebraic} and~\eqref{eq:riccati-breve-m_algebraic}, respectively. Therefore, the time recursion of the latter equations admit a unique solution due to the contraction assumption, which also implies that the infinite-horizon limit exists.} 
\end{Assumption}
For the two special models presented in Assumptions~\ref{ass: mean-field_decoupled} and~\ref{ass:collaborative_cost}, one can use the standard  schur method~\cite{laub1979schur} to derive the following sufficient conditions.\footnote{Given $\gamma \in (0,1)$,  $(\sqrt \gamma A,\sqrt \gamma B)$ is stablizable and  $(\sqrt \gamma A, Q^{1/2})$ is detectable  if $(A,B)$ is stablizable and  $(A, Q^{1/2})$ is detectable, respectively.}
\begin{Assumption}[Invariance subspace solution with  asymptotically vanishing weights]\label{ass:decoupled1}
\emph{Let Assumption~\ref{ass: mean-field_decoupled} hold. Let also $(A,B)$ be stabilizable, and $(A,Q^{1/2})$ and $(A,(Q+S^x)^{1/2})$ be  detectable.}
\end{Assumption}

\begin{Assumption}[Invariance subspace solution for social cost with positive weights]\label{ass:decoupled2}
\emph{Let Assumption~\ref{ass:collaborative_cost} hold. Let also $(A,B)$ and $(A+\bar A, B+\bar B)$ be stabilizable, and $(A,(G^x)^{1/2})$ and $(A+\bar A,(\bar Q+G^x)^{1/2})$ be  detectable.}
\end{Assumption}

\begin{Proposition}[Decoupled standard algebraic Riccati equations for infinite population]\label{cor:decoupled1_inf}
Let Assumption~\ref{ass:decoupled1} hold.  The infinite-population solution of the non-standard Riccati equation~\eqref{eq:riccati-bar-m-algebraic} can be expressed  by the  following standard algebraic Riccati equation:
$
  \mathbf P=\DIAG(Q, Q+S^x)
  + \gamma \mathbf A^\intercal \mathbf P \mathbf A - \gamma^2 \mathbf A^\intercal \mathbf P \mathbf B 
  ( \DIAG(R, R+S^u)+\gamma \mathbf B^\intercal \mathbf  P \mathbf B)^{-1} \mathbf B^\intercal \mathbf P \mathbf A
$,
  where  
\begin{equation}
\Compress
\DIAG(\theta(0),\bar \theta(0)):=- \gamma ( \DIAG(R, R+S^u)+ \gamma \mathbf B^\intercal \mathbf  P \mathbf B)^{-1} \mathbf B^\intercal \mathbf P \mathbf A.
\end{equation}
\end{Proposition}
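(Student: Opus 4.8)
The plan is to reduce the claim to the classical convergence theory of the discrete-time LQR Riccati recursion, after first exploiting the block-diagonal decoupling already available from Proposition~\ref{cor:decoupled1}. Under Assumption~\ref{ass:decoupled1} we have $\bar A=\bar B=0$, hence $\mathbf A=\DIAG(A,A)$ and $\mathbf B=\DIAG(B,B)$, and Proposition~\ref{cor:decoupled1} shows that in the infinite-population ($\alpha\to 0$) regime the non-standard recursion~\eqref{eq:riccati-bar-m} collapses to the block-diagonal recursion~\eqref{eq:Riccati_non_collaborative}. Because the cost data $\DIAG(Q,Q+S^x)$ and $\DIAG(R,R+S^u)$, the matrices $\mathbf A,\mathbf B$, and the terminal condition $\mathbf P_{T+1}=\mathbf 0$ are all block-diagonal, the recursion preserves this structure, so $\mathbf P_t=\DIAG(\mathbf P_t^{1,1},\mathbf P_t^{2,2})$, where $\mathbf P_t^{1,1}$ and $\mathbf P_t^{2,2}$ are the ordinary finite-horizon Riccati iterates of the two standard LQR problems with data $(A,B,Q,R)$ and $(A,B,Q+S^x,R+S^u)$, respectively.

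Next I would absorb the discount factor into the system matrices. Setting $\tilde A:=\sqrt\gamma\,A$ and $\tilde B:=\sqrt\gamma\,B$, the diagonal blocks of the algebraic equation~\eqref{eq:riccati-bar-m-algebraic} at $\alpha=0$ become the \emph{standard} discrete-time algebraic Riccati equations for the scaled pairs $(\tilde A,\tilde B)$ with weights $(Q,R)$ and $(Q+S^x,R+S^u)$; this is precisely the normalization underlying Assumption~\ref{ass:invertible-algebraic}, and it is the step for which the Schur-based treatment~\cite{laub1979schur} is designed.

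It then remains to verify the standard hypotheses for convergence. Assumption~\ref{ass: mean-field_decoupled} supplies $R\succ 0$, $R+S^u\succ 0$ and $Q,Q+S^x\succeq 0$, while Assumption~\ref{ass:decoupled1} supplies stabilizability of $(A,B)$ and detectability of $(A,Q^{1/2})$ and $(A,(Q+S^x)^{1/2})$; by the discount-scaling observation these carry over to stabilizability of $(\tilde A,\tilde B)$ and detectability of $(\tilde A,Q^{1/2})$, $(\tilde A,(Q+S^x)^{1/2})$. Under exactly these conditions the finite-horizon Riccati iteration started from zero converges monotonically to the unique positive-semidefinite stabilizing solution of the associated algebraic equation. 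Applying this blockwise yields $\lim_{T\to\infty}\mathbf P_{T-t+1}^{1,1}=\mathbf P^{1,1}$ and $\lim_{T\to\infty}\mathbf P_{T-t+1}^{2,2}=\mathbf P^{2,2}$, independent of $t$; assembling $\mathbf P=\DIAG(\mathbf P^{1,1},\mathbf P^{2,2})$ then solves the stated block-diagonal algebraic Riccati equation, and passing to the limit in the finite-horizon gain formula~\eqref{eq:optimal_non_collaborative_strategy} recovers $\DIAG(\theta(0),\bar\theta(0))$. In particular this verifies Assumption~\ref{ass:invertible-algebraic} for the present model.

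The main obstacle I anticipate is not the limit itself, which is classical once the block-diagonal reduction is in force, but confirming that the decoupling transfers cleanly from the finite-horizon recursion of Proposition~\ref{cor:decoupled1} to the discounted algebraic setting --- in particular that the off-diagonal block $\mathbf P^{2,1}$ of the original non-standard equation, which enters the mean-field gain $\bar\theta(0)$ through $\bar F(0)$ and $\bar K(0)$ in~\eqref{eq:breve-f-stationary}, is correctly identified with the $(2,2)$ block of the decoupled solution. I would settle this by writing out the recursion satisfied by $\mathbf P^{2,1}$ at $\alpha=0$ and checking directly that it coincides with the standard $(\tilde A,\tilde B,Q+S^x,R+S^u)$ Riccati recursion, so that the two gain expressions agree in the limit.
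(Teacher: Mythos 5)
Your proposal is correct and follows essentially the route the paper itself relies on (the paper states this proposition without a displayed proof): the finite-horizon block decoupling of Proposition~\ref{cor:decoupled1}, the $\sqrt{\gamma}$ discount-absorption noted in the paper's footnote, and classical convergence of the zero-initialized Riccati iteration to the stabilizing DARE solution under the stabilizability/detectability conditions of Assumption~\ref{ass:decoupled1}. The subtlety you flag at the end --- that the $(2,2)$ block of the decoupled solution is really the $(2,1)$ block of the non-standard $\mathbf P^{0}$ entering $\bar F(0)$ and $\bar K(0)$ --- is exactly the identification the paper makes in the proof of Proposition~\ref{cor:decoupled1}, so your plan to verify it by writing out that block's recursion at $\alpha=0$ is the right (and sufficient) closing step.
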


\begin{Proposition}[Decoupled standard algebraic Riccati equations for social cost]\label{cor:decoupled2_inf}
Let Assumption~\ref{ass:decoupled2} hold.  The social cost solution  can be expressed  by  the  following standard algebraic Riccati equation:
$
  \mathbf P=\DIAG(G^x, \bar Q+G^x)
  + \gamma \mathbf A^\intercal \mathbf P \mathbf A - \gamma^2 \mathbf A^\intercal \mathbf P \mathbf B 
  ( \DIAG(G^u, \bar R+G^u)+\gamma \mathbf B^\intercal \mathbf  P \mathbf B)^{-1} \mathbf B^\intercal \mathbf P \mathbf A
$, where 
\begin{equation}\label{eq:optimal_collaborative_strategy_inf}
\DIAG(\theta,\bar \theta):=- \gamma ( \DIAG(G^u, \bar R+G^u)+ \gamma \mathbf B^\intercal \mathbf  P \mathbf B)^{-1} \mathbf B^\intercal \mathbf P \mathbf A.
\end{equation}
\end{Proposition}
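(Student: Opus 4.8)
The plan is to reduce Proposition~\ref{cor:decoupled2_inf} to the finite-horizon social-cost result already embedded in Theorem~\ref{thm:aux} and then pass to the limit $T \to \infty$ by way of standard discrete-time algebraic-Riccati theory. Under Assumption~\ref{ass:collaborative_cost}, the proof of Theorem~\ref{thm:aux} shows that the common factor $(1-\alpha^i_n)$ cancels from the best-response condition~\eqref{eq:proof_thm1_aux}, so the finite-horizon matrices become weight-independent and block diagonal: ${\mathbf P^\alpha_t}^{1,2}$ and ${\mathbf P^\alpha_t}^{2,1}$ vanish and the two diagonal blocks obey the decoupled recursion~\eqref{eq:Riccati_collaborative}. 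The same cancellation is valid verbatim once the discount factor $\gamma$ is inserted as in~\eqref{eq:riccati-bar-m-algebraic}, so the first step is to record that, for the social cost, the off-diagonal blocks of $\mathbf P^\alpha$ are identically zero and the gains in~\eqref{eq:breve-f-stationary} collapse to the weight-free expression~\eqref{eq:optimal_collaborative_strategy_inf}; consequently~\eqref{eq:riccati-bar-m-algebraic} splits into the two independent algebraic Riccati equations associated with the subsystems $(A, B, G^x, G^u)$ and $(A+\bar A, B+\bar B, \bar Q + G^x, \bar R + G^u)$.

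Second, I would establish that these two decoupled recursions converge. Using the change of variable invoked in the footnote to Assumption~\ref{ass:contractive} (and detailed in the proof of Theorem~\ref{thm:mfs-inf_inf}), each $\gamma$-discounted finite-horizon recursion is equivalent to an undiscounted Riccati recursion for the scaled pair $(\sqrt{\gamma}\, A, \sqrt{\gamma}\, B)$ with weights $G^x$ and $G^u$, and likewise for $(\sqrt{\gamma}(A+\bar A), \sqrt{\gamma}(B+\bar B))$ with $\bar Q + G^x$ and $\bar R + G^u$. By the footnote preceding Assumption~\ref{ass:decoupled1}, Assumption~\ref{ass:decoupled2} guarantees that each scaled pair is stabilizable and that the corresponding square-root weighting yields a detectable pair. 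Invoking the classical convergence theorem for the discrete-time Riccati recursion under stabilizability and detectability --- e.g. through the Schur-method solution of~\cite{laub1979schur} --- each finite-horizon block converges to the unique positive-semidefinite stabilizing solution of its discounted algebraic Riccati equation. Stacking the two limits block-diagonally gives $\mathbf P = \DIAG(\mathbf P^{1,1}, \mathbf P^{2,2})$ solving the claimed standard algebraic Riccati equation, which verifies Assumption~\ref{ass:invertible-algebraic} for this model and produces the gains~\eqref{eq:optimal_collaborative_strategy_inf}.

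I expect the main obstacle to be the bookkeeping of the discount factor rather than any genuinely new difficulty. Specifically, one must check that the $\sqrt{\gamma}$-reparametrization maps the non-standard recursion~\eqref{eq:riccati-bar-m-algebraic} exactly onto a pair of \emph{standard} discounted Riccati recursions after the off-diagonal blocks are removed, and that the positive-definiteness of $G^u$ and $\bar R + G^u$ from Assumption~\ref{ass:collaborative_cost} guarantees invertibility of $\DIAG(G^u, \bar R + G^u) + \gamma \mathbf B^\intercal \mathbf P \mathbf B$ along the whole recursion and in the limit, so that the gains are well defined. The preservation of the block-diagonal structure in the limit is immediate, since it holds at every finite horizon; once the two standard algebraic Riccati solutions are shown to exist and to be stabilizing, uniqueness and the stated closed form follow directly from the decoupled LQR theory.
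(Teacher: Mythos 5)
Your proposal is correct and follows essentially the same route the paper intends: the paper gives no separate proof for this proposition, but relies precisely on the finite-horizon decoupling established in the proof of Theorem~\ref{thm:aux} under Assumption~\ref{ass:collaborative_cost} (the cancellation of $(1-\alpha^i_n)$ and the vanishing of the off-diagonal blocks of $\mathbf P^\alpha$), followed by the $\sqrt{\gamma}$-rescaling noted in the footnote and the standard stabilizability/detectability convergence argument via the Schur method of~\cite{laub1979schur} applied to each decoupled block. Your additional care about the invertibility of $\DIAG(G^u,\bar R+G^u)+\gamma\mathbf B^\intercal\mathbf P\mathbf B$ along the recursion is a correct and welcome detail that the paper leaves implicit.
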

\subsection{Solutions of Problems~\ref{prob:aux} and~\ref{prob:mfs}}

\begin{Theorem}\label{thm:mfs-inf_inf}
 The following holds for  Problem~\ref{prob:aux} with the  infinite-horizon  cost  function~\eqref{eq:cost_infinite}.
\begin{itemize}
\item  Let Assumptions~\ref{ass:invertible} and~\ref{ass:invertible-algebraic} hold  for  homogeneous weights.  There exists a stationary  sub-game  perfect Nash equilibrium  for any number of   players  such that  for any player $ i \in \mathbb{N}_n$,
\begin{equation}\label{eq:thm-optimal-mfs-u_1_inf}
u^i_t=\theta^n x^i_t +  ( \bar \theta^n-\theta^n)\bar x_t, \quad t\in \mathbb{N},
\end{equation} 
where  $\theta^n:=\theta(\frac{1}{n})$ and $\bar \theta^n:=\bar \theta(\frac{1}{n})$ are given by~\eqref{eq:breve-f-stationary}.

\item  Let Assumption~\ref{ass:invertible_infinite_pop}  and~\ref{ass:invertible-algebraic} hold  for asymptotically vanishing weights.  As $n \rightarrow \infty$,  there exists a stationary sub-game perfect  Nash equilibrium    such that  for any player $ i \in \mathbb{N}$,
\begin{equation}\label{eq:thm-optimal-mfs-u_2_inf}
u^i_t=\theta^{\infty} x^i_t +  (\bar \theta^{\infty}  -\theta^{\infty})\bar x_t, \quad t\in \mathbb{N},
\end{equation} 
where  $\theta^\infty:=\theta(0)$ and $\bar \theta^\infty:=\bar \theta(0)$  are given by \eqref{eq:breve-f-stationary}.

\item  Let Assumption~\ref{ass:decoupled2} hold.  There exists a stationary  sub-game  perfect Nash equilibrium  for arbitrary number of players  and positive weights   such that  for any player $ i \in \mathbb{N}_n$ and any weight $\alpha^i_n > 0$,
\begin{equation}\label{eq:thm-optimal-mfs-u_3_inf}
u^i_t=\theta x^i_t +  ( \bar \theta -\theta)\bar x_t, \quad t\in \mathbb{N},
\end{equation} 
where  $\theta:=\theta(\alpha^i_n)$ and $\bar \theta:=\bar \theta(\alpha^i_n)$ do not depend on the weight $\alpha^i_n$, and   are given by~\eqref{eq:optimal_collaborative_strategy_inf}. 
\end{itemize}
\end{Theorem}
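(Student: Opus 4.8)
The plan is to reduce the infinite-horizon discounted game to the finite-horizon setting already solved in Theorem~\ref{thm:aux}, and then pass to the limit $T\to\infty$ using Assumption~\ref{ass:invertible-algebraic}. The first step is a change of variables that eliminates the discount factor. For every player $i$ and time $t$, I would introduce the scaled quantities $\check x^i_t := \gamma^{(t-1)/2} x^i_t$, $\check u^i_t := \gamma^{(t-1)/2} u^i_t$, and $\check w^i_t := \gamma^{(t-1)/2} w^i_t$, together with their weighted averages. Under this scaling, the discounted per-step cost $(1-\gamma)\gamma^{t-1} c^i(\mathbf x_t,\mathbf u_t)$ becomes (up to the constant $1-\gamma$) a time-invariant quadratic form in the scaled variables, while the dynamics~\eqref{eq:dynamics-mf} transform into a time-invariant recursion in which $A,B,\bar A,\bar B$ are replaced by $\sqrt{\gamma}A,\sqrt{\gamma}B,\sqrt{\gamma}\bar A,\sqrt{\gamma}\bar B$ and the driving noise is the correspondingly scaled, zero-mean $\check w^i_t$. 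This converts the discounted infinite-horizon problem into an \emph{undiscounted, time-invariant} infinite-horizon game, and in these coordinates the backward recursion~\eqref{eq:riccati-bar-m} collapses to a time-invariant iteration whose fixed point is exactly the algebraic equation~\eqref{eq:riccati-bar-m-algebraic} with gains~\eqref{eq:breve-f-stationary}; this is the change of variables referenced in the footnote to Assumption~\ref{ass:contractive}.

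For the second step, fix a finite truncation horizon $T$. Theorem~\ref{thm:aux} then applies verbatim to the scaled game and yields the unique subgame perfect Nash equilibrium with the time-varying gains $\theta_t(\alpha),\bar\theta_t(\alpha)$ obtained from~\eqref{eq:breve-f}. Because the rescaled recursion is time-invariant, the gain at a fixed lag from the terminal time is the $T$-fold iterate of the time-invariant map, and Assumption~\ref{ass:invertible-algebraic} guarantees that these iterates converge as $T\to\infty$ to the algebraic solution $\mathbf P^\alpha$, so the gains converge to the stationary $\theta(\alpha),\bar\theta(\alpha)$ of~\eqref{eq:breve-f-stationary}. The three cases of the theorem are inherited from the three cases of Theorem~\ref{thm:aux}: homogeneous weights use Assumption~\ref{ass:invertible}, asymptotically vanishing weights use Assumption~\ref{ass:invertible_infinite_pop} together with the $n\to\infty$ passage already carried out there, and the social cost uses Assumption~\ref{ass:decoupled2}, where stabilizability and detectability let the standard Schur/LQR machinery furnish the stabilizing algebraic solution, so that the infinite-horizon limit exists automatically.

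The third step is to verify that this stationary limit is genuinely an equilibrium of the infinite-horizon game, not merely the limit of finite-horizon equilibria. Here I would posit the stationary value function of the same form as~\eqref{eq:value-function-form} with $\mathbf P^{\alpha^i_n}_t$ replaced by the constant $\mathbf P^{\alpha^i_n}$, and check that it solves the stationary discounted Bellman best-response equation for a generic player when all others use the stationary strategy. The first-order condition reduces, exactly as in~\eqref{eq:proof_aux_final}, to a stationary version of the best-response identity whose unique consistent solution across players reproduces the gains~\eqref{eq:breve-f-stationary}, and strict convexity again follows from positive definiteness of $(1-\alpha)F(\alpha)+\alpha\bar F(\alpha)$. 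To turn this stationary optimality into a true optimum I would invoke a verification/transversality argument: since $\gamma\in(0,1)$ and admissible actions are square-summable, the discounted tail $\gamma^{T}\mathbb{E}[V^i(\check{\mathbf x}_{T+1})]$ vanishes as $T\to\infty$, so telescoping the Bellman inequality over $t=t_0,\dots,T$ and letting $T\to\infty$ shows that the stationary strategy attains the value function and that no unilateral deviation can improve it.

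The main obstacle I expect is precisely this last verification/transversality step in the stochastic, infinite-horizon setting: establishing that the pointwise limit of the finite-horizon value functions equals the infinite-horizon optimal cost-to-go, that the discounted tail genuinely vanishes for every admissible square-summable deviation, and that the cross-player fixed point remains unique in the limit. Unlike the single-agent control case, the equilibrium requires this to hold simultaneously for every player's best-response problem, so one must ensure that interchanging the limit $T\to\infty$ with both the minimization over $u^i$ and the fixed-point consistency across players is legitimate. Assumption~\ref{ass:invertible-algebraic} is what makes the gains converge, but the square-summability bound on controls and the strict-convexity condition are what guarantee that the limiting strategy is the unique stationary subgame perfect Nash equilibrium rather than a spurious solution.
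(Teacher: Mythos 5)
Your proposal follows essentially the same route as the paper: a discount-absorbing change of variables that turns the finite-horizon recursion of Theorem~\ref{thm:aux} into a time-invariant iteration (the paper rescales the value function and Riccati matrices by $\gamma^{-t'-1}$ and reverses time, which is equivalent to your $\gamma^{(t-1)/2}$ scaling of states and actions), followed by passage to the limit $T\to\infty$ under Assumption~\ref{ass:invertible-algebraic}. The only difference is in the final step, where the paper invokes the contraction of the discounted Bellman operator together with the $\varepsilon^T$-perfect-equilibrium convergence result of Fudenberg and Levine, whereas you carry out the equivalent verification/transversality argument directly; both are standard and valid.
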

\begin{proof}
The proof is presented in Appendix~\ref{sec:proof_thm:mfs-inf_inf}.
\end{proof}

\subsection{Solutions  of Problem~\ref{prob:ns}}\label{sec:Finite-population solution}
Similar to the finite-horizon NS strategies~\eqref{eq:action-ns} and~\eqref{eq:action-ns-inf}, we define the following  strategies for every $ t \in \mathbb{N}$:
\begin{equation}\label{eq:action-ns-infinite_horizon}
\hat u^i_t=\theta^n \hat x^i_t + ( \bar \theta^n -\theta^n) z^n_t, \hspace{.5cm} \text{(SAPDE)}
\end{equation}
and
\begin{equation}\label{eq:action-ns-infinite_horizon_inf}
\hat u^i_t=\theta^\infty \hat x^i_t + ( \bar \theta^\infty -\theta^\infty) z^\infty_t. \hspace{.5cm} \text{(SWMFE)}
\end{equation}

In general,  NS strategies~\eqref{eq:action-ns-infinite_horizon} and~\eqref{eq:action-ns-infinite_horizon_inf} can destabilize the game due to  the error propagation  induced by the imperfection  of NS information structure. 
To see this, one can construct a simple counterexample as follows.
\begin{Counterexample}
\emph{ Consider a game with homogeneous weights and   coupled dynamics. In this case, the game under  strategy~\eqref{eq:action-ns-infinite_horizon}  is unstable if matrix $\tilde A^n_t$ in Lemma~\ref{lemma:breve_equality} is not stablizable (Hurwitz), where the relative distances in~\eqref{eq:dynamics_error_relative} grow unboundedly, despite the fact that  the PS strategy~\eqref{eq:thm-optimal-mfs-u_1_inf} is  bounded under Assumptions~\ref{ass:invertible} and~\ref{ass:invertible-algebraic}.}
\end{Counterexample}
To overcome this hurdle,  an additional assumption is imposed on the model to ensure that  the games under strategies~\eqref{eq:action-ns-infinite_horizon} and~\eqref{eq:action-ns-infinite_horizon_inf} are stable.

\begin{Assumption}\label{ass:hurwitz}
\edit{Let  Assumptions~\ref{ass:invertible_infinite_pop} and~\ref{ass:invertible-algebraic} hold,  and in addition,  matrix $\tilde A^n$ in Lemma~\ref{lemma:breve_equality} is Hurwitz for any $n \geq n_0$, where $\tilde A^n$ is defined as  $\tilde A^n_t$ where subscript $t$ is omitted.
}
\end{Assumption}

\begin{Proposition}\label{proposition:hurwitz}
Let Assumption~\ref{ass:hurwitz} hold. Then, for any $ n \geq n_0$, $|J^{i,\gamma}_n(\hat{\mathbf g}^{i}_n,\hat{\mathbf g}^{-i}_n)| <\infty$ and $|J^{i,\gamma}_n(\hat{\mathbf g}^{i}_\infty,\hat{\mathbf g}^{-i}_\infty)| <\infty$.
\end{Proposition}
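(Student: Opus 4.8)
The plan is to rewrite the infinite-horizon discounted cost~\eqref{eq:cost_infinite} produced by each strategy as a discounted sum of quadratic forms in two decoupled quantities, namely the deterministic prediction signal and the zero-mean relative distances of Lemma~\ref{lemma:breve_equality}, and then to exploit the $\sqrt{\gamma}$-weighted reformulation of discounted LQ costs so that the stability postulated in Assumption~\ref{ass:hurwitz}, together with $\gamma\in(0,1)$, forces the series to converge.

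First I would decompose the closed-loop trajectory. Under the SAPDE strategy~\eqref{eq:action-ns-infinite_horizon}, using the relative distances in~\eqref{eq:relative_error}, each quantity entering the per-step cost is an affine function of $\VEC(e^i_t,e_t,\zeta_t)$ and the prediction $z^n_t$; indeed $\hat x^i_t=e^i_t+e_t+z^n_t$ and $\hat{\bar x}_t=e_t+z^n_t$, with the corresponding actions obtained by substituting the stationary gains $\theta^n,\bar\theta^n$. Hence the per-step cost in~\eqref{eq:cost-mfs} is a quadratic form in $\VEC(e^i_t,e_t,\zeta_t,z^n_t)$, treated in the same manner as in Lemma~\ref{lemma:relative_dynamics_1} and Theorem~\ref{thm:delta_j_correlated}; the covariance-regularization terms summed over $j\in\mathbb N_n$ are handled identically because $\Delta\hat x^j_t=\Delta x^j_t$ by Lemma~\ref{lemma:breve_equality} and the weights are summable. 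Since the relative distances obey the linear recursion~\eqref{eq:dynamics_error_relative} driven by zero-mean noise, one has $\Exp{e^i_t}=\Exp{e_t}=\Exp{\zeta_t}=\mathbf 0$; therefore in expectation the cross terms between the deterministic $z^n_t$ and the fluctuations cancel, and $\Exp{c^i(\hat{\mathbf x}_t,\hat{\mathbf u}_t)}$ splits into a deterministic quadratic form in $z^n_t$ plus a quadratic form in the covariance $\Sigma^n_t:=\VAR(\VEC(e^i_t,e_t,\zeta_t))$.

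The main step is the convergence bound, which I would carry out through the $\sqrt{\gamma}$ change of scale. Writing $\hat A_n:=\sqrt{\gamma}\,\tilde A^n$, both the deterministic contribution $\sum_t\gamma^{t-1}\|z^n_t\|^2_M$ and the stochastic contribution $\sum_t\gamma^{t-1}\TR(\tilde Q^n\Sigma^n_t)$ can be resummed, by interchanging the order of summation in the noise expansion of $\Sigma^n_t$, into traces of matrix series of the form $\sum_{k\ge0}\hat A_n^{\,k}(\cdot)(\hat A_n^\intercal)^{k}$, which converge precisely when $\hat A_n$ is Schur stable. Since Assumption~\ref{ass:hurwitz} makes $\tilde A^n$ Hurwitz, i.e.\ $\rho(\tilde A^n)<1$, we get $\rho(\hat A_n)=\sqrt{\gamma}\,\rho(\tilde A^n)<\sqrt{\gamma}<1$, so every such series converges; combined with the noise covariances bounded by Assumption~\ref{assump:iid}, this yields $|J^{i,\gamma}_n(\hat{\mathbf g}^i_n,\hat{\mathbf g}^{-i}_n)|<\infty$ for every $n\ge n_0$.

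For the SWMFE strategy~\eqref{eq:action-ns-infinite_horizon_inf} the argument is identical after replacing $\theta^n,\bar\theta^n,z^n_t,\tilde A^n$ by $\theta^\infty,\bar\theta^\infty,z^\infty_t,\tilde A^\infty$ and using the dynamics~\eqref{eq:dynamics_error_relative_inf}. The one delicate point, which I expect to be the main obstacle, is that Assumption~\ref{ass:hurwitz} only asserts Schur stability of $\tilde A^n$ and not of $\tilde A^\infty$. I would close this gap through the continuity of the gains, $\theta^n\to\theta^\infty$ and $\bar\theta^n\to\bar\theta^\infty$ (Assumption~\ref{ass:existence_finite}), which gives $\tilde A^n\to\tilde A^\infty$ and hence, by continuity of the spectral radius, $\rho(\tilde A^\infty)=\lim_{n}\rho(\tilde A^n)\le1$. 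Consequently $\rho(\sqrt{\gamma}\,\tilde A^\infty)\le\sqrt{\gamma}<1$, so $\sqrt{\gamma}\,\tilde A^\infty$ remains Schur stable even when $\tilde A^\infty$ is only marginally stable, and the same resummation gives $|J^{i,\gamma}_n(\hat{\mathbf g}^i_\infty,\hat{\mathbf g}^{-i}_\infty)|<\infty$. The subtlety is precisely that the discount factor supplies the stability margin the limit $\tilde A^\infty$ might otherwise lack, so the uniform boundedness of the gains from Assumption~\ref{ass:invertible_infinite_pop} must be invoked to guarantee $\rho(\tilde A^\infty)\le1$ rather than a larger value.
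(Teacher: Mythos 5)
Your proof is correct and reaches the same conclusion from the same two structural ingredients as the paper's proof --- the decomposition of the closed-loop trajectory into the deterministic prediction signal plus the relative distances of Lemma~\ref{lemma:breve_equality}, and the Schur stability of $\tilde A^n$ granted by Assumption~\ref{ass:hurwitz} --- but the convergence mechanism is genuinely different. The paper argues qualitatively: $z^n_{1:\infty}$ is bounded because it shares its dynamics with the SPNE deep state $\bar x_{1:\infty}$, which is bounded under Assumption~\ref{ass:invertible-algebraic} by Theorem~\ref{thm:mfs-inf_inf}; the relative distances are bounded because $\tilde A^n$ is Hurwitz; hence $\hat{\bar x}_{1:\infty}$ and $\hat{\bar u}_{1:\infty}$ are bounded and the discounted sum of uniformly bounded per-step costs is trivially finite. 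You instead resum the discounted quadratic forms as matrix geometric series in $\sqrt{\gamma}\,\tilde A^n$ and invoke $\rho(\sqrt{\gamma}\,\tilde A^n)=\sqrt{\gamma}\,\rho(\tilde A^n)<1$. Your route costs a little more bookkeeping but buys two things. First, the discount factor alone supplies a stability margin, so your argument would survive even if $\tilde A^n$ were only marginally stable, whereas the paper's boundedness argument would not. Second, and more importantly, it closes a gap the paper papers over with ``a similar argument holds'' for the SWMFE case: Assumption~\ref{ass:hurwitz} asserts Hurwitzness of $\tilde A^n$ only, while the matrix actually governing~\eqref{eq:dynamics_error_relative_inf} is $\tilde A^\infty$; your continuity step $\rho(\tilde A^\infty)=\lim_n\rho(\tilde A^n)\le 1$, combined with $\sqrt{\gamma}<1$, makes the second boundedness claim rigorous. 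Note also that since $\tilde A^n$ is block diagonal, its third diagonal block (which drives $z^n_t$ in~\eqref{eq:deterministic_process}) is itself Schur stable, so your treatment of the deterministic contribution does not even need to route through Theorem~\ref{thm:mfs-inf_inf} as the paper's does. Both proofs are valid; yours is the more robust of the two.
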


\begin{proof}
The proof is presented in Appendix~\ref{sec:proof_proposition:hurwitz}.
\end{proof}

\begin{Remark}[A sufficient condition for Assumption~\ref{ass:hurwitz}]
\emph{Let  Assumptions~\ref{ass:invertible_infinite_pop} and~\ref{ass:invertible-algebraic} hold for any $n \geq n_0$. When  players are dynamically decoupled (i.e., $\bar A_t$ and $\bar B_t$ are zero, $\forall t \in \mathbb{N}$), matrix $\tilde A^n$ in Lemma~\ref{lemma:breve_equality}  becomes Hurwitz. In particular, stability of  diagonal matrix  $\tilde A^n$ reduces to the stability of matrices $A+B\theta^n$ and $A+B \bar \theta^n$,  which represent the dynamics of the finite- and infinite-population  states under  SPNE strategies (that  have infinite-horizon limits from Assumption~\ref{ass:invertible-algebraic}).}
\end{Remark}

\begin{Theorem}\label{thm:ns-inf}
Let Assumptions~\ref{ass:invertible_infinite_pop},~\ref{assump:iid},~\ref{ass:invertible-algebraic} and~\ref{ass:hurwitz} hold for the special case of homogeneous weights for any $n \geq n_0$, $n_0 \in \mathbb{N}$.  The following holds for Problem~\ref{prob:ns} with the infinite-horizon cost function.  
\begin{itemize}
\item The NS strategy~\eqref{eq:action-ns-infinite_horizon} is a sequential asymptotic reciprocal equilibrium such that
\begin{align}\label{eq:delta_algebraic_time_mfs}
\varepsilon_{t_0}(n)&:= \Delta J_P(\hat{\mathbf g}_n, \mathbf g^\ast_n)= \gamma^{t_0-1}\big( (1-\gamma)\TR(H^x_{t_0} \tilde M^n) \nonumber \\
&+ \gamma \TR(H^w \tilde M^n)\big) \in \mathcal{O}(\frac{1}{n}),\quad t_0 \in \mathbb{N},
\end{align} 
where $\tilde M^n$ is the solution of the algebraic Lyapunov equation: $
\tilde M^n=\gamma (\tilde A^n)^\intercal \tilde M^n \tilde A^n + \tilde Q^n$.

\item Let also Assumption~\ref{ass:rationality_gap} hold. Strategy~\eqref{eq:action-ns-infinite_horizon_inf} is a sequential asymptotic Nash equilibrium such that
$
\varepsilon_{t_0}(n)=\Delta J_P(\hat{\mathbf g}_n, \mathbf g^\ast_n)+\Delta J_R(\hat{\mathbf g}_n, \mathbf g^\ast_n), \quad t_0 \in \mathbb{N},
$
where $ \lim_{n \rightarrow \infty} \bar \varepsilon_{t_0}(n)=0$.
\end{itemize}
\end{Theorem}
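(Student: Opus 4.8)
The plan is to replay the finite-horizon argument behind Theorems~\ref{thm:delta_j_correlated} and~\ref{thm:NS-optimality}, but with the backward time recursions replaced by their stationary algebraic counterparts and with the discount factor $\gamma$ carried through. Throughout, Assumptions~\ref{ass:invertible_infinite_pop} and~\ref{ass:invertible-algebraic} guarantee that the finite-horizon SPNE gains admit the stationary limits $\theta^n,\bar\theta^n$ appearing in strategy~\eqref{eq:action-ns-infinite_horizon}, and Assumption~\ref{ass:hurwitz} makes $\tilde A^n$ Hurwitz. First I would establish the stationary analogue of Lemma~\ref{lemma:breve_equality}: because the model is time-homogeneous and every player applies the stationary SAPDE gains, the relative-distance vector $\VEC(e^i_t,e_t,\zeta_t)$ obeys the time-invariant recursion with transition matrix $\tilde A^n$ and forcing $\VEC(\Delta w^i_t,\bar w_t,\bar w_t)$. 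Since $\tilde A^n$ is Hurwitz, $\sqrt\gamma\,\tilde A^n$ is stable, so by Proposition~\ref{proposition:hurwitz} the discounted costs $J^{i,\gamma}_n(\hat{\mathbf g}^i_n,\hat{\mathbf g}^{-i}_n)$ are finite and the performance gap $\Delta J_P(\hat{\mathbf g}_n,\mathbf g^\ast_n)$ is well defined.

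Second, I would express this gap through an algebraic Lyapunov equation. By Lemma~\ref{lemma:relative_dynamics_1} the per-step performance loss of player $i$ is the quadratic form $[e^i_t\ \ e_t\ \ \zeta_t]^\intercal \tilde Q^n [e^i_t\ \ e_t\ \ \zeta_t]$ with the stationary weight $\tilde Q^n$. Summing the discounted quadratic form of a stable linear system and using $\Exp{e_t}=\Exp{\zeta_t}=\mathbf 0_{d_x\times 1}$ reduces the infinite sum to a single trace against the solution $\tilde M^n$ of $\tilde M^n=\gamma(\tilde A^n)^\intercal \tilde M^n \tilde A^n+\tilde Q^n$, whose uniqueness again follows from stability of $\sqrt\gamma\,\tilde A^n$. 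Tracking the discounting from time $t_0$ onward separates the initial-covariance contribution (weighted by $(1-\gamma)$ and carried by $H^x_{t_0}$) from the per-step noise contribution (weighted by $\gamma$ and carried by $H^w$), producing exactly the displayed formula~\eqref{eq:delta_algebraic_time_mfs}.

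Third, I would prove the $\mathcal O(1/n)$ rate. The key structural fact is that the $(1,1)$ block of $\tilde Q^n$ vanishes (Lemma~\ref{lemma:relative_dynamics_1}); because $\tilde A^n$ is block diagonal, propagating this through the algebraic Lyapunov equation forces the $(1,1)$ block of $\tilde M^n$ to vanish as well. Hence the only surviving contributions in $\TR(H^x_{t_0}\tilde M^n)$ and $\TR(H^w\tilde M^n)$ pair the nonzero off-$(1,1)$ blocks of $\tilde M^n$ with the blocks of $H^x_{t_0},H^w$ involving the deep-state and averaged-noise quantities. For homogeneous weights these blocks---namely $\VAR(\bar x_{t_0})$, $\VAR(\bar w)$ and the cross terms $\Exp{(x^i-\bar x)\bar x^\intercal}$, $\Exp{(w^i-\bar w)\bar w^\intercal}$---are each $\mathcal O(1/n)$ by independence (Assumption~\ref{assump:iid}), while $\tilde M^n$ stays uniformly bounded in $n$ because $\tilde A^n$ and $\tilde Q^n$ converge to stable/finite limits. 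This yields $\varepsilon_{t_0}(n)\in\mathcal O(1/n)$ and establishes the SARE claim.

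Finally, for the SANE statement I would add the rationality gap. Under Assumption~\ref{ass:rationality_gap} (decoupled dynamics or continuous unilateral deviation) the infinite-horizon analogue of Lemma~\ref{lemma:rationality_gap} gives $\Delta J_R(\hat{\mathbf g}_n,\mathbf g^\ast_n)\to 0$, and combining it with the performance gap via the triangle inequality as in Theorem~\ref{thm:NS-optimality} yields $\bar\varepsilon_{t_0}(n)=\Delta J_P+\Delta J_R\to 0$. I expect the main obstacle to be the passage to the infinite horizon itself: one must justify that the finite-horizon Lyapunov solutions converge to the algebraic solution uniformly in $n$, and that a unilateral deviation still incurs finite discounted cost so that the rationality gap is meaningful. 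Both rely crucially on the Hurwitz property of Assumption~\ref{ass:hurwitz} (equivalently stability of $\sqrt\gamma\,\tilde A^n$); without it the error propagation of the NS information structure can destabilize the game, as illustrated by the counterexample preceding Assumption~\ref{ass:hurwitz}.
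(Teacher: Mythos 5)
Your proposal is correct and follows essentially the same route as the paper, whose own proof is a one-line deferral to the finite-horizon arguments of Theorems~\ref{thm:delta_j_correlated} and~\ref{thm:NS-optimality} combined with the stability guarantee of Proposition~\ref{proposition:hurwitz}; you have simply made explicit the steps that deferral entails (stationary relative-distance dynamics, the discounted algebraic Lyapunov representation, the vanishing $(1,1)$ block, the $\mathcal{O}(1/n)$ decay of the averaged covariance blocks, and the triangle inequality for the rationality gap). The convergence and finiteness caveats you flag at the end are exactly the points the paper resolves via Assumptions~\ref{ass:invertible-algebraic} and~\ref{ass:hurwitz}, so nothing is missing.
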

\begin{proof}
The proof  follows along the same lines of the proofs of  Theorems~\ref{thm:delta_j_correlated} and~\ref{thm:NS-optimality}, where the infinite-horizon  strategy in~\eqref{eq:thm-optimal-mfs-u_1_inf} under Assumption~\ref{ass:invertible-algebraic} is the limit of the finite-horizon one in~\eqref{eq:thm-optimal-mfs-u_1}, as  $T \rightarrow \infty$,  upon noting that strategy~\eqref{eq:action-ns-infinite_horizon} is stable according to Proposition~\ref{proposition:hurwitz} under Assumption~\ref{ass:hurwitz}.
\end{proof}

\begin{Theorem}\label{cor:ns-infinite-inf}
Let Assumptions~\ref{ass:invertible_infinite_pop},~\ref{assump:iid},~\ref{ass:invertible-algebraic} and~\ref{ass:hurwitz} hold for any $n \geq n_0$, $n_0 \in \mathbb{N}$.  The following holds for Problem~\ref{prob:ns} with the infinite-horizon cost function.  
\begin{itemize}
\item For the special case of homogeneous weights,  the NS strategy~\eqref{eq:action-ns-infinite_horizon_inf} is a sequential asymptotic reciprocal equilibrium such that
$
\varepsilon_{t_0}(n):= \Delta J_P(\hat{\mathbf g}_\infty, \mathbf g^\ast_n),\quad t_0 \in \mathbb{N},
$
where $\lim_{n \rightarrow \infty} \varepsilon_{t_0}(n)=0$.
\item Let also Assumption~\ref{ass:rationality_gap} hold. For asymptotically vanishing weights, strategy~\eqref{eq:action-ns-infinite_horizon_inf} is a sequential asymptotic Nash equilibrium,
where $ \lim_{n \rightarrow \infty} \bar \varepsilon_{t_0}(n)=0$.
\end{itemize}
\end{Theorem}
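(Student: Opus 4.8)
The plan is to prove Theorem~\ref{cor:ns-infinite-inf} as the infinite-horizon counterpart of Theorem~\ref{cor:ns-infinite}, lifted to $T=\infty$ by the very device that carries Theorem~\ref{thm:NS-optimality} into Theorem~\ref{thm:ns-inf}. Two facts make the lift legitimate. First, under Assumption~\ref{ass:invertible-algebraic} the stationary gains $\theta^\infty,\bar\theta^\infty$ of~\eqref{eq:action-ns-infinite_horizon_inf} and the finite-population gains $\theta^n,\bar\theta^n$ of~\eqref{eq:thm-optimal-mfs-u_2_inf} are the $T\to\infty$ limits of their finite-horizon analogues, so the predictions and all error weights converge. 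Second, under Assumption~\ref{ass:hurwitz} Proposition~\ref{proposition:hurwitz} guarantees $|J^{i,\gamma}_n(\hat{\mathbf g}^i_\infty,\hat{\mathbf g}^{-i}_\infty)|<\infty$, so every discounted series defining the gaps converges and may be summed.

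\emph{Part 1 (SARE, homogeneous weights).} By Lemma~\ref{lemma:breve_equality}, when~\eqref{eq:action-ns-infinite_horizon_inf} is played in the finite-population game the relative distances $\VEC(e^i_t,e_t,\zeta_t)$ obey the linear recursion~\eqref{eq:dynamics_error_relative_inf} with the fixed, $n$-independent matrix $\tilde A^\infty$. Following Lemma~\ref{lemma:relative_dynamics_1} and Theorem~\ref{thm:delta_j_correlated}, $\Delta J^i_P(\hat{\mathbf g}_\infty,\mathbf g^\ast_n)$ is a discounted quadratic form in these distances whose weight $\tilde Q^\infty$ is the matrix of Lemma~\ref{lemma:relative_dynamics_1} with $\theta^n,\bar\theta^n$ replaced by $\theta^\infty,\bar\theta^\infty$; summing the discounted series turns it into a trace against the solution of the algebraic Lyapunov equation $\tilde M^\infty=\gamma(\tilde A^\infty)^\intercal\tilde M^\infty\tilde A^\infty+\tilde Q^\infty$, which is bounded because Proposition~\ref{proposition:hurwitz} makes the underlying cost finite. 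To obtain $\varepsilon_{t_0}(n)\to0$ I would split through the infinite-population SPNE: the SWMFE-versus-infinite-population-SPNE part vanishes exactly as in Lemma~\ref{lemma:PG_asymptotic}, since $\tilde Q^\infty$ has zero $(1,1)$ block so every surviving term carries a factor $e_t$ or $\zeta_t$, and under Assumption~\ref{assump:iid} the deep noise and deep initial covariance obey $\VAR(\bar w_t),\VAR(\bar x_1)=\mathcal{O}(1/n)$; the remaining infinite-population-versus-finite-population-SPNE part vanishes by continuity of the cost in the gains together with $\theta^n\to\theta^\infty,\bar\theta^n\to\bar\theta^\infty$.

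\emph{Part 2 (SANE, asymptotically vanishing weights).} Here I would additionally bound the rationality gap. The infinite-horizon analogue of Lemma~\ref{lemma:rationality_gap}, valid under Assumption~\ref{ass:rationality_gap} (decoupled dynamics, or unilateral deviations confined to $\mathcal{C}_L[0,T]$), yields $\Delta J_R(\hat{\mathbf g}_\infty,\mathbf g^\ast_n)\to0$, with finiteness of the perturbed cost again supplied by Proposition~\ref{proposition:hurwitz}. Setting $\bar\varepsilon_{t_0}(n):=\Delta J_P(\hat{\mathbf g}_\infty,\mathbf g^\ast_n)+\Delta J_R(\hat{\mathbf g}_\infty,\mathbf g^\ast_n)$ and applying the triangle inequality as in Theorem~\ref{thm:NS-optimality} shows that~\eqref{eq:action-ns-infinite_horizon_inf} meets Definition~\ref{def:pDSS2} with $\lim_{n\to\infty}\bar\varepsilon_{t_0}(n)=0$.

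\emph{Main obstacle.} The genuinely SWMFE-specific subtlety, absent in Theorem~\ref{thm:ns-inf}, is that the strategy is compared against the \emph{finite}-population SPNE (gains $\theta^n$) even though its own error dynamics runs on the fixed matrix $\tilde A^\infty$ (gains $\theta^\infty$); hence the gap carries a deterministic bias $z^\infty_t-z^n_t$ that must be shown to vanish, which it does because $\bar\theta^n\to\bar\theta^\infty$ and $\tilde A^\infty$ is discount-stable. Boundedness of the fixed Lyapunov solution $\tilde M^\infty$ is delivered directly by Proposition~\ref{proposition:hurwitz}, and the margin $\gamma<1$ is exactly what keeps the discounted sums convergent even though $\tilde A^\infty$ is only the limit of the Hurwitz matrices $\tilde A^n$, so that Assumption~\ref{ass:hurwitz} and Proposition~\ref{proposition:hurwitz} suffice and no separate stability hypothesis on $\tilde A^\infty$ is required.
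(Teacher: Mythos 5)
Your overall strategy --- lift the finite-horizon result to $T=\infty$ via Assumption~\ref{ass:invertible-algebraic} (stationary gains as limits of finite-horizon ones) and Proposition~\ref{proposition:hurwitz} (finiteness of the discounted costs) --- is exactly the paper's, and your Part~1 is sound: the paper simply invokes Lemma~\ref{lemma:PG_asymptotic} together with Corollary~\ref{cor:sufficient_7} (homogeneous weights make Assumption~\ref{ass:invertible_infinite_pop} imply Assumption~\ref{ass:existence_finite}), whereas you reconstruct the gap quantitatively through the error dynamics~\eqref{eq:dynamics_error_relative_inf} and an algebraic Lyapunov equation. That is a legitimate, more explicit route, though note that Lemma~\ref{lemma:relative_dynamics_1} and Theorem~\ref{thm:delta_j_correlated} are stated for the SAPDE gains $\theta^n$, so your mixed comparison (SWMFE error dynamics on $\tilde A^\infty$ versus SPNE with gains $\theta^n$) needs the bias term you flag in your last paragraph to be handled carefully; the paper's continuity-plus-LLN argument avoids this bookkeeping entirely.

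The genuine gap is in Part~2. You define $\bar\varepsilon_{t_0}(n):=\Delta J_P(\hat{\mathbf g}_\infty,\mathbf g^\ast_n)+\Delta J_R(\hat{\mathbf g}_\infty,\mathbf g^\ast_n)$ and run the triangle inequality of Theorem~\ref{thm:NS-optimality}, whose middle step $J^i_n(\mathbf g^{\ast,i}_n,\mathbf g^{\ast,-i}_n)_{t_0}\leq J^i_n(\mathbf g^i,\mathbf g^{\ast,-i}_n)_{t_0}$ uses the Nash property of the \emph{finite}-population SPNE $\mathbf g^\ast_n$. For general asymptotically vanishing weights --- the regime of the second bullet --- the hypotheses of the theorem do not guarantee that $\mathbf g^\ast_n$ exists for finite $n$ (Assumption~\ref{ass:invertible_infinite_pop} only controls the infinite-population limit, and Counterexample~\ref{counter2} shows the finite-population game can fail to have a solution while the infinite-population one does). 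This is precisely why the paper's proof of the finite-horizon analogue, Theorem~\ref{cor:ns-infinite}, routes the triangle inequality through the \emph{infinite}-population SPNE $\mathbf g^\ast_\infty$ and decomposes $\bar\varepsilon_{t_0}(n)$ into the three terms of~\eqref{eq:bar_vareepsilon}, each involving only $\hat{\mathbf g}_\infty$ and $\mathbf g^\ast_\infty$: a continuity-in-$n$ term, an almost-sure coincidence of $\hat{\mathbf g}_\infty$ with $\mathbf g^\ast_\infty$ in the limit, and a rationality-gap term for the pair $(\hat{\mathbf g}_\infty,\mathbf g^\ast_\infty)$. As written, your argument only establishes SANE under the additional hypothesis that the finite-population SPNE exists (e.g., homogeneous weights or Assumption~\ref{ass:existence_finite}), not in the generality claimed by the statement; replacing $\mathbf g^\ast_n$ by $\mathbf g^\ast_\infty$ throughout Part~2 repairs it.
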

\begin{proof}
The proof  follows along the same lines of the proof of  Theorem~\ref{cor:ns-infinite}, where the infinite-horizon  strategy in~\eqref{eq:thm-optimal-mfs-u_2_inf} under Assumption~\ref{ass:invertible-algebraic} is the limit of the finite-horizon one in~\eqref{eq:thm-optimal-mfs-u_2}, as  $T \rightarrow \infty$,  upon noting that strategy~\eqref{eq:action-ns-infinite_horizon_inf} is stable according to Proposition~\ref{proposition:hurwitz} under Assumption~\ref{ass:hurwitz}.
\end{proof}

\section{Relation to mean-field models}\label{sec:special}
In this section, we  connect our  homogeneous weights results  to three different  infinite-population  exchangeable games, called mean-field games, mean-field-type games and mean-field teams. In general, there is no explicit relationship   between the above games as they consider different solution concepts. In particular, mean-field game seeks a Nash solution and mean-field team  a globally team-optimal one while   mean-field-type game (also known as McKean-Vlasov type)  looks for  neither Nash nor globally team-optimal, in general.\footnote{In particular, it is  demonstrated in~\cite{Bensoussan2016} that the solution of LQ mean-field-type game   does not necessarily coincide with that of the LQ mean-field game (which is a Nash equilibrium). In addition, it is shown in~\cite{Jalal2019risk} that the (team-optimal) infinite-population solution of the risk-sensitive LQ mean-field teams is equivalent to its risk-neutral one, which is not the case for risk-sensitive LQ mean-field-type game~\cite{moon2019linear}. These findings  suggest that the solution concept  of mean-field-type game is  generally different from those of mean-field games and mean-field teams.}  In fact, mean-field-type game  has a single agent  that strives  to hedge against uncertainty by including in its dynamics and cost some components depending on the mean-field of a \emph{virtual population} of infinitely many identical copies of itself.  Remarkably,  it turns out that  LQ model is one of the very few special models  in which all the above games happen to have some overlaps, which can  now  be identified due to a unified explicit characterization of the solution presented in this paper.
\subsection{Mean-field-type games}
 In LQ mean-field-type game,  it is well known  that the solution is  linear in the local state and the expectation of the state of  a generic player,  and that   the corresponding gains are obtained by solving  standard Riccati equations~\eqref{eq:Riccati_collaborative}, where the proof technique is based on a completion-of-square method~\cite{yong2013linear,elliott2013discrete}.  Hence,  LQ mean-field-type game  resembles a collaborative game with social cost function,  where the weights are homogeneous,  information structure is DSS and  the number of players is infinite. 
 
It is to be noted that mean-field-type model is not  well defined for finite $n$ and/or deterministic game. For example, in deterministic game, $x_t=\Exp{x_t}$, which reduces the mean-field-type game to a single-agent (unconstrained) optimization problem.  In addition, it is conceptually challenging to  distinguish between DSS and NS information structures in mean-field-type model, mainly because it is not a multi-player game.  For instance,  the role of an extra stability condition introduced in Assumption~\ref{ass:hurwitz} is overlooked in the literature of mean-field-type games, for the case  when the dynamics are coupled and information stricture is NS.
\subsection{Mean-field games}\label{sec:special2}

For  non-collaborative games with decoupled player dynamics and a tracking cost formulation,   approximate Nash solutions for   discounted and time-averaged cost functions   are proposed  in~\cite{huang2007large} and~\cite{li2008asymptotically}, respectively.  Similarly for  collaborative games  with decoupled dynamics, an approximate  Nash solution  is determined   by employing  mean-field game and person-by-person approaches~\cite{Huang2012social}.   The solutions are presented  in  terms of coupled forward and backward ordinary differential equations, and are solved by some fixed-point approaches. A drawback of these fixed-point approaches is  that they can admit more than one solutions because they identify Nash solutions (rather than sub-game perfect Nash equilibrium). To  identify some of these extra (non-unique) Nash solutions, one may take into account the following  two phenomena: 1) informationally non-unique solutions  in the infinite-population game (see~Subsection~\ref{sec:informationally})  and  2)  the fact that  infinite-population solution is not necessarily the limit of the (exact) finite-population solution.

However,  it can be shown that the continuous-time counterparts of  the proposed standard   Riccati equations in~\eqref{eq:Riccati_non_collaborative}  can solve  the coupled  ordinary differential equations  and simplify the  existence and uniqueness conditions in~\cite{huang2007large,li2008asymptotically,Huang2012social}. As a result, one can conclude that the classical mean-field game (Nash)  solution coincides with the sub-game perfect Nash equilibrium of the infinite-population model.  A similar relationship  between forward-backward equations of mean-field games and Riccati equations has recently  been established in~\cite{Huang2019}  in terms of one symmetric and one non-symmetric Riccati equation,  which may be viewed  as a special case of our non-standard Riccati equation~\eqref{eq:riccati-bar-m}, where  the information structure is PS, weights are homogeneous,  matrices $\bar B_t$,  $S^u_t$, $\bar R_t$, $G^x_t$ and $G^u_t$ are zero, $n=\infty$, and $Q_t+2S^x_t+\bar Q_t$ is  a positive definite matrix taking the following special form: $(I-\Gamma_t)^\intercal Q_t (I-\Gamma_t)$, given a matrix $\Gamma_t$. In contrast  to our proposed  proof technique  in Theorem~\ref{thm:aux}, the methodology used in~\cite{Huang2019}  is  more complicated and uses a parametric representation of $n$ coupled Riccati equations to show that in the infinite-population case, the resultant set of coupled Riccati equations converge to two (initially guessed) Riccati equations.

Note that our proposed \emph{weighted extended} mean-field game solution is more general than the classical one, as we consider weighted average (rather than  the unweighed one) with both state and action coupling (rather than only state coupling) and  with a more general cost function (rather than only the special case of tracking cost structure).   In addition,  similar to mean-field-type game, we prove that  for NS information structure with coupled dynamics, an additional stability condition (similar to Assumption~\ref{ass:hurwitz}), apart from the boundedness condition of Riccati equations in Assumption~\ref{ass:invertible-algebraic}, is required to ensure that the propagation of the error associated with the imperfection of NS information structure remains bounded in the finite-population game.  The existence of such assumption is often overlooked in the literature of mean-field games.

\subsection{Mean-field teams}
Mean-field teams are initially introduced in~\cite{arabneydi2016new}, showcased in~\cite{JalalCDC2015},~\cite{Jalal2017linear},~\cite{Jalal2019LCSS},~\cite{JalalCDC2018}, and have recently been extended to deep structured teams~\cite{Jalal2019risk,Jalal2020CCTA}. In such models, the solution concept is  globally team-optimal. It is observed  that 
 the  Nash solution presented in~Theorem~\ref{thm:aux} under Assumption~\ref{ass:collaborative_cost} coincides with the team-optimal solution of  mean-field teams. This implies that the  collaborative LQ game and  LQ  team with an arbitrary number of  players have identical solutions under PS and DSS information structures.  Note that the Nash and  team-optimal solutions are not necessarily the same in LQ games  with identical cost functions, where local  convexity (Nash solution)  is different from global convexity (team-optimal solution).    
In addition, the authors in \cite{sanjari2020optimal} and~\cite{sanjari2019optimal}  characterize
exchangeability properties for convex static and dynamic stochastic team problems and
establish the convergence of a sequence of team-optimal polices for $n$-agent teams to a globally optimal solution as $n \rightarrow \infty$, where the information structure is NS and the dynamics are decoupled.

\section{Two counterexamples}\label{sec:special3}
In this section, we first present two counterexamples to illustrate a fundamental difference between finite- and infinite-population solutions.
\subsection{Finite-population model} Consider a finite-population game with homogeneous weights. Let  $S^x_t=-Q_t$ and $S^u_t=-R_t$, $\forall t \in \mathbb{N}_T$. In addition, let matrices $Q_t+\frac{1}{n-1} G^x_t$ and $\bar Q_t+G^x_t-Q_t$ be positive semi-definite and matrices  $R_t+\frac{1}{n-1} G^u_t$ and $\bar R_t+G^u_t-R_t$  positive definite, for all $t \in \mathbb{N}_T$. In this case,  it can be shown that Assumption~\ref{ass:invertible} is satisfied and, the non-standard Riccati equation~\eqref{eq:riccati-bar-m} decomposes  into decoupled two standard Riccati equations as follows:
    \begin{multline}\label{eq:standard-1}
  \mathbf P_t=\DIAG(Q_t+\frac{1}{n-1}
 G^x_t,\bar Q_t+G^x_t-Q_t)
  +\mathbf A_t^\intercal \mathbf P_{t+1} \mathbf A_t \\
  -  \mathbf A_t^\intercal \mathbf P_{t+1} \mathbf B_t  ( \DIAG(R_t+ \frac{1}{n-1}
G^u_t, \bar R_t+ G^u_t-R_t)\\
+ \mathbf B^\intercal_t \mathbf  P_{t+1} \mathbf B_t)^{-1} \mathbf B^\intercal \mathbf P_{t+1} \mathbf A_t,
  \end{multline} 
  where  at any $t \in \mathbb{N}_T$,  $\mathbf P^{1,1}_t=P_t$ and
\begin{align}
\DIAG(\theta^n_t,\bar \theta^n_t):=- ( \DIAG(R_t+\frac{1}{n-1} G^u_t, \bar R_t+ G^u_t-R_t)\\
+ \mathbf B^\intercal_t \mathbf  P_{t+1} \mathbf B_t)^{-1} \mathbf B^\intercal \mathbf P_{t+1} \mathbf A_t.
\end{align}

\begin{Counterexample}\label{counter1}
\emph{ Consider the following  scalars: $Q_t <0$, $R_t >0$, $S^x_t=-Q_t$, $S^u_t=-R_t$, $\bar Q_t=Q_t$,  $\bar R_t=G^u_t=R_t$, and $G^x_t=-100 Q_t$, $  \forall t\in \mathbb{N}_T$. With this set of parameters,  the solution of the first  Riccati equation  (i.e. $\mathbf P^{1,1}_t, t \in \mathbb{N}_T$)  is positive  for relatively  small $n$ but it   becomes  negative for  sufficiently large $n$,  because   $Q_t$ in~\eqref{eq:standard-1} becomes  dominant.  Therefore, matrix $F_t(0)$ in the infinite-population game  becomes negative,  which violates the necessary (best-response condition obtained in the proof of Theorem~\ref{thm:aux})  for  the existence of a Nash solution (note that $F_t(0)= \left[  R_t+B_t^\intercal \mathbf{P}^{1,1}_{t+1} B_t  \right]$). As a result, the infinite-population game in this example admits no solution while the finite-population game admits a unique  solution for small~$n$.
}
\end{Counterexample}

\subsection{Infinite-population model} Consider an infinite-population game  with homogeneous weights, where the model parameters are chosen according to   Assumption~\ref{ass: mean-field_decoupled}. 

\begin{Counterexample}\label{counter2}
\emph{ Let $\bar Q_t$, $\bar R_t$, $G^x_t$ and $G^u_t$ be  large negative definite matrices  for every $t \in \mathbb{N}_T$.  In this case,   $\bar F_t(1/n)$ in~\eqref{eq:breve-f} becomes  a large negative definite matrix violating  the  positive definiteness condition (that is a necessary condition for the best-response equation).\footnote{Note that a large negative $\bar Q_{t+1}$ leads to a large negative $\bar Q_{t+1}^{2,2}$ and $\mathbf  P_{t+1}{}^{2,2}$, which results in a large negative $\bar F_t(1/n)$ according to~\eqref{eq:breve-f}.}  On the other hand,  according to Proposition~\ref{cor:decoupled1} and Theorem~\ref{thm:aux}, the existence and uniqueness of the solution of the infinite-population game is  independent of matrices $\bar Q_t$, $\bar R_t$, $G^x_t$ and $G^u_t$, because the effect of   $\bar F_t(1/n)$ in Assumption~\ref{ass:invertible_infinite_pop} vanishes at the rate $1/n$ as $n\rightarrow \infty$. This means  that although the Nash solution does  not exist in the finite-population game in this example, an approximate Nash solution may exist (which is the  solution of  the infinite-population game).
}
\end{Counterexample}

\section{Generalizations }\label{sec:generalization}
A salient feature of  the proposed methodology is  the fact that  it  can be naturally extended to other variants of LQ games such as  zero-sum, risk-sensitive,  multiple-subpopulation games with  collaborative and non-collaborative sub-populations.  In addition,  its unified framework can shed light on the similarities and differences between mean-field games and mean-field-type games. This  is useful  in providing  an explicit  closed-form solution for the existing  results in mean-field models.  For example, major-minor  mean-field model~\cite{Huang2010large}  may be  viewed as a special case of the multiple-subpopulation game, where the size of  the major sub-population is one.   In what follows, we  briefly discuss the  extension of our results to multiple sub-populations and  multiple  linear  regressions (also called features).

Consider a  game  with $K \in \mathbb{N}$ disjoint sub-populations, where each sub-population $k \in \mathbb{N}_K$ consists of $n_k \in \mathbb{N}$  players and $f_k \in \mathbb{N}$   linear regressions. The  players  are coupled through $\sum_{k=1}^K f_k \in \mathbb{N}$ linear regressions of the states and actions of all players.   Using the gauge transformation proposed in~\cite{Jalal2019risk},  one can  reduce the ($\sum_{k=1}^K n_k$)-player game  to a $(K+\sum_{k=1}^K f_k)$-player game wherein    $(\sum_{k=1}^K n_k)$ standard coupled   matrix Riccati equations reduce to  $(K+\sum_{k=1}^K f_k)$ non-standard coupled  matrix Riccati equations.   For example,  in this article, one has $K=1$ and $f_k=1$, which leads to  $2$  non-standard coupled matrix Riccati equations,  presented in one formulation  in~\eqref{eq:riccati-bar-m}.  For the special case of major-minor model,  one has   $K=2$ and $f_k=1$, $k \in \mathbb{N}_K$,  where the major's state is equal to its feature,  because  its size is one, i.e., $x^0_t=\bar x^0_t$, leading to  $2+(2-1)=3$ non-standard  coupled  matrix Riccati equations.

Other interesting generalizations  could be reinforcement learning~\cite{Masoud2020CDC,Jalal2020CCTA,Vida2020CDC}, min-LQG games~\cite{salhab2019collective}, noisy observations~\cite{Jalal2021CDC_KF}, constrained optimization~\cite{Jalal2021CDC_MPC},  common-noise~\cite{carmona2016} and systems with  Markov jumps~\cite{wang2012mean}, to  name only a few.

 \begin{figure}[t!]
\centering
\includegraphics[trim={0cm 8cm 0 8cm},clip, width=\linewidth]{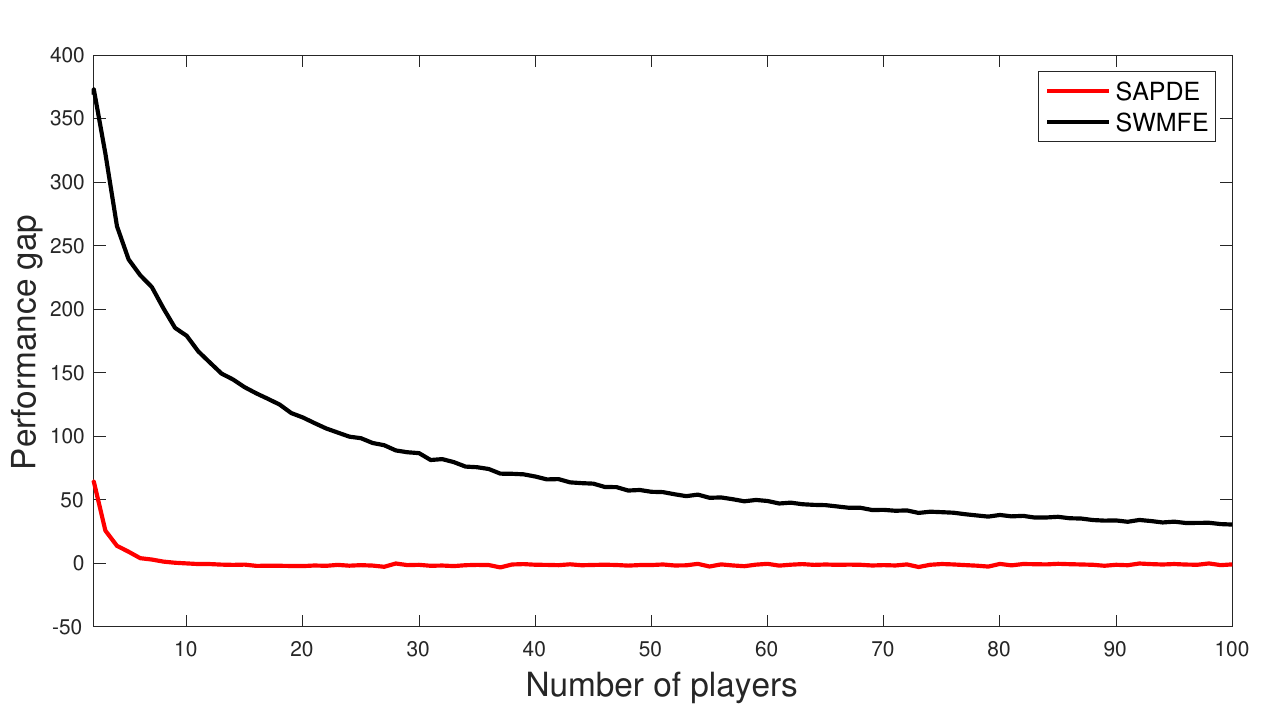}
\caption{The performance gap in Example 1 for  the two NS strategies proposed in~\eqref{eq:action-ns} and~\eqref{eq:action-ns-inf}.}\label{fig1}
\end{figure}
\section{A Numerical Example}\label{sec:numerical}
A numerical example is presented to illustrate the difference between the two NS strategies proposed   in~\eqref{eq:action-ns} and~\eqref{eq:action-ns-inf}, where the weights are homogeneous.

\textbf{Example 1.} Consider a game described in Section~\ref{sec:problem} with the following numerical parameters: $A_t=1$, $B_t=1$, $\bar A_t=0$, $\bar B_t=0$,  $Q_t=1$, $S^x_t=-0.5$, $\bar Q_t=5$, $R_t=5$, $S^u_t=\bar R_t=G^x_t=G^u_t=0$, $\mu_x=10$, $\VAR(x^i_1)=2$, $\VAR(w^i_t)=1$ and $T=50$ for every $t \in \mathbb{N}_T$ and $i \in \mathbb{N}_n$, where the probability distributions of  the initial states and local noises are i.i.d. and Gaussian.
It is shown in Figure~\ref{fig1} that the finite-population NS strategy proposed in \eqref{eq:action-ns},  which takes the number of players into account, converges to the sub-game perfect Nash solution faster than the infinite-population  NS strategy  proposed in~\eqref{eq:action-ns-inf}, as $n \rightarrow \infty$.

\section{Acknowledgement}
The first author would like to thank professors Tamer Basar and Boualem Djehiche for helpful and insightful discussions pertained to the proof of Theorem 1.

\section{Conclusions}\label{sec:conclusion}

In this paper, we  studied  a class of collaborative and non-collaborative linear quadratic games for three different types of weights under   three different information structures,  and obtained exact and approximate sequential equilibria by deriving a  novel non-standard Riccati equation.  In addition, we investigated two special cases wherein the non-standard Riccati equation  reduces to two decoupled standard Riccati equations. The key idea was  to use a gauge transformation to induce   some  orthogonality  and linear dependence  among variables in order to arrive at  a  low-dimensional solution.  Moreover,  we introduced performance and rationality gaps and  established several convergence results and characterized  the role of  the number of players in collaborative and non-collaborative games.  Furthermore, we generalized our main  results to the discounted  infinite-horizon cost function, multiple sub-populations and multiple features.

\bibliographystyle{IEEEtran}
\bibliography{Jalal_Ref}

\begin{thebibliography}{10}
\providecommand{\url}[1]{#1}
\csname url@samestyle\endcsname
\providecommand{\newblock}{\relax}
\providecommand{\bibinfo}[2]{#2}
\providecommand{\BIBentrySTDinterwordspacing}{\spaceskip=0pt\relax}
\providecommand{\BIBentryALTinterwordstretchfactor}{4}
\providecommand{\BIBentryALTinterwordspacing}{\spaceskip=\fontdimen2\font plus
\BIBentryALTinterwordstretchfactor\fontdimen3\font minus
  \fontdimen4\font\relax}
\providecommand{\BIBforeignlanguage}[2]{{%
\expandafter\ifx\csname l@#1\endcsname\relax
\typeout{** WARNING: IEEEtran.bst: No hyphenation pattern has been}%
\typeout{** loaded for the language `#1'. Using the pattern for}%
\typeout{** the default language instead.}%
\else
\language=\csname l@#1\endcsname
\fi
#2}}
\providecommand{\BIBdecl}{\relax}
\BIBdecl

\bibitem{case1969toward}
J.~H. Case, ``Toward a theory of many player differential games,'' \emph{SIAM
  Journal on Control and Optimization}, vol.~7, no.~2, pp. 179--197, 1969.

\bibitem{starr1969a}
A.~W. Starr and Y.-C. Ho, ``Nonzero-sum differential games, part {I},''
  \emph{Journal of Optimization Theory and Applications}, vol.~3, no.~3, pp.
  184--206, 1969.

\bibitem{starr1969b}
------, ``Nonzero-sum differential games, part {II},'' \emph{Journal of
  Optimization Theory and Applications}, vol.~3, no.~4, pp. 207--219, 1969.

\bibitem{lukes1971global}
D.~Lukes and D.~Russell, ``A global theory for linear-quadratic differential
  games,'' \emph{Journal of Mathematical Analysis and Applications}, vol.~33,
  no.~1, pp. 96--123, 1971.

\bibitem{basar1976uniqueness}
T.~Ba{\c{s}}ar, ``On the uniqueness of the {N}ash solution in linear-quadratic
  differential games,'' \emph{International Journal of Game Theory}, vol.~5,
  no. 2-3, pp. 65--90, 1976.

\bibitem{Papavassilopoulos1984}
G.~P. Papavassilopoulos and G.~J. Olsder, ``On the linear quadratic, closed
  loop, no memory {N}ash game,'' \emph{Journal of Optimization Theory and
  Applications}, vol.~42, no.~4, pp. 551--560, 1984.

\bibitem{eisele1982nonexistence}
T.~Eisele, ``Nonexistence and nonuniqueness of open-loop equilibria in
  linear-quadratic differential games,'' \emph{Journal of Optimization Theory
  and Applications}, vol.~37, no.~4, pp. 443--468, 1982.

\bibitem{engwerda1998open}
J.~C. Engwerda, ``On the open-loop {N}ash equilibrium in {LQ}-games,''
  \emph{Journal of Economic Dynamics and Control}, vol.~22, no.~5, pp.
  729--762, 1998.

\bibitem{Basar1974counter}
T.~Ba{\c{s}}ar, ``A counterexample in linear-quadratic games: Existence of
  nonlinear {N}ash solutions,'' \emph{Journal of Optimization Theory and
  Applications}, vol.~14, no.~4, pp. 425--430, 1974.

\bibitem{Basarbook}
T.~Ba{\c{s}}ar and G.~Olsder, \emph{Dynamic Noncooperative Game Theory}.\hskip
  1em plus 0.5em minus 0.4em\relax Mathematics in Science and Engineering,
  1982.

\bibitem{dockner2000differential}
E.~J. Dockner, S.~Jorgensen, N.~Van~Long, and G.~Sorger, \emph{Differential
  games in economics and management science}.\hskip 1em plus 0.5em minus
  0.4em\relax Cambridge University Press, 2000.

\bibitem{engwerda2005lq}
J.~Engwerda, \emph{{LQ} dynamic optimization and differential games}.\hskip 1em
  plus 0.5em minus 0.4em\relax John Wiley \& Sons, 2005.

\bibitem{Jalal2019risk}
J.~Arabneydi and A.~G. Aghdam, ``Deep structured teams with linear quadratic
  model: Partial equivariance and gauge transformation,'' \emph{[Online].
  Available at https://arxiv.org/abs/1912.03951}, 2019.

\bibitem{Jalal2019MFT}
------, ``Deep teams: Decentralized decision making with finite and infinite
  number of agents,'' \emph{IEEE Transactions on Automatic Control, DOI:
  10.1109/{TAC}.2020.2966035}, 2020.

\bibitem{huang2003individual}
M.~Huang, P.~E. Caines, and R.~P. Malham{\'e}, ``Individual and mass behaviour
  in large population stochastic wireless power control problems: centralized
  and {N}ash equilibrium solutions,'' \emph{in Proceedings of the \nth{42} IEEE
  Conference on Decision and Control}, pp. 98--103, 2003.

\bibitem{huang2007large}
------, ``Large-population cost-coupled {LQG} problems with nonuniform agents:
  Individual-mass behavior and decentralized {$\varepsilon$}-{N}ash
  equilibria,'' \emph{IEEE Transactions on Automatic Control}, vol.~52, no.~9,
  pp. 1560--1571, 2007.

\bibitem{Lasry2006I}
J.~M. Lasry and P.~L. Lions, ``Jeux \'a champ moyen. {I} – le cas
  stationnaire,'' \emph{C. R. Acad. Sci. Paris, Ser. I}, vol. 343, pp.
  619--625, 2006.

\bibitem{Lasry2006II}
------, ``Jeux \'a champ moyen. {II} – horizon fini et contrôle optimal,''
  \emph{C. R. Acad. Sci. Paris, Ser. I}, vol. 343, pp. 679--684, 2006.

\bibitem{carmona2013control}
R.~Carmona, F.~Delarue, and A.~Lachapelle, ``Control of {M}ckean--{V}lasov
  dynamics versus mean field games,'' \emph{Mathematics and Financial
  Economics}, vol.~7, no.~2, pp. 131--166, 2013.

\bibitem{yong2013linear}
J.~Yong, ``Linear-quadratic optimal control problems for mean-field stochastic
  differential equations,'' \emph{SIAM Journal on Control and Optimization},
  vol.~51, no.~4, pp. 2809--2838, 2013.

\bibitem{elliott2013discrete}
R.~Elliott, X.~Li, and Y.-H. Ni, ``Discrete time mean-field stochastic
  linear-quadratic optimal control problems,'' \emph{Automatica}, vol.~49,
  no.~11, pp. 3222--3233, 2013.

\bibitem{duncan2018linear}
T.~Duncan and H.~Tembine, ``Linear--quadratic mean-field-type games: A direct
  method,'' \emph{Games}, vol.~9, no.~1, p.~7, 2018.

\bibitem{bardi2014linear}
M.~Bardi and F.~S. Priuli, ``Linear-quadratic {N}-person and mean-field games
  with ergodic cost,'' \emph{SIAM Journal on Control and Optimization},
  vol.~52, no.~5, pp. 3022--3052, 2014.

\bibitem{Priuli2015}
F.~S. Priuli, ``Linear-quadratic {N}-person and mean-field games: Infinite
  horizon games with discounted cost and singular limits,'' \emph{Dynamic Games
  and Applications}, vol.~5, no.~3, pp. 397--419, 2015.

\bibitem{AdlJohWei2015Equilibria}
S.~Adlakha, R.~Johari, and G.~Y. Weintraub, ``Equilibria of dynamic games with
  many players: Existence, approximation, and market structure,'' \emph{Journal
  of Economic Theory}, vol. 156, pp. 269--316, 2015.

\bibitem{Bensoussan2016}
A.~Bensoussan, K.~C.~J. Sung, S.~C.~P. Yam, and S.~P. Yung, ``Linear-quadratic
  mean field games,'' \emph{Journal of Optimization Theory and Applications},
  vol. 169, no.~2, pp. 496--529, 2016.

\bibitem{cardaliaguet2015master}
P.~Cardaliaguet, F.~Delarue, J.-M. Lasry, and P.-L. Lions, \emph{The master
  equation and the convergence problem in mean field games}.\hskip 1em plus
  0.5em minus 0.4em\relax Annals of Mathematics Studies, Princeton University
  Press, 2019, vol. 201.

\bibitem{Tembine2020book}
T.~Başar, B.~Djehiche, and H.~Tembine, \emph{Mean-Field-Type Game Theory I:
  Foundations and New Directions}.\hskip 1em plus 0.5em minus 0.4em\relax
  Springer, 2020.

\bibitem{Caines2018book}
P.~E. Caines, M.~Huang, and R.~P. Malham{\'e}, ``Mean field games,'' in
  \emph{Handbook of Dynamic Game Theory}, T.~Ba{\c{s}}ar and G.~Zaccour,
  Eds.\hskip 1em plus 0.5em minus 0.4em\relax Springer International
  Publishing, 2018, pp. 345--372.

\bibitem{carmona2018probabilistic}
R.~Carmona and F.~Delarue, \emph{Probabilistic Theory of Mean Field Games with
  Applications I-II}.\hskip 1em plus 0.5em minus 0.4em\relax Springer, 2018.

\bibitem{arabneydi2016new}
J.~Arabneydi, ``New concepts in team theory: Mean field teams and reinforcement
  learning,'' Ph.D. dissertation, Department of Electrical and Computer
  Engineering, McGill University, Montreal, Canada, 2016.

\bibitem{gomes2016extended}
D.~A. Gomes and V.~K. Voskanyan, ``Extended deterministic mean-field games,''
  \emph{SIAM Journal on Control and Optimization}, vol.~54, no.~2, pp.
  1030--1055, 2016.

\bibitem{selten1975reexamination}
R.~Selten, ``Reexamination of the perfectness concept for equilibrium points in
  extensive games,'' \emph{International journal of game theory}, vol.~4,
  no.~1, pp. 25--55, 1975.

\bibitem{Jalal2020Nash}
J.~Arabneydi and A.~G. Aghdam, ``Deep structured teams and games with
  markov-chain model: Finite and infinite number of players,''
  \emph{Submitted}, 2019.

\bibitem{moriyasu1983elementary}
K.~Moriyasu, \emph{An elementary primer for gauge theory}.\hskip 1em plus 0.5em
  minus 0.4em\relax World Scientific, 1983.

\bibitem{JalalCDC2015}
J.~Arabneydi and A.~Mahajan, ``Team-optimal solution of finite number of
  mean-field coupled {LQG} subsystems,'' in \emph{Proceedings of the \nth{54}
  IEEE Conference on Decision and Control}, 2015, pp. 5308 -- 5313.

\bibitem{Jalal2017linear}
------, ``Linear quadratic mean field teams: Optimal and approximately optimal
  decentralized solutions,'' \emph{Available at
  https://arxiv.org/abs/1609.00056}, 2016.

\bibitem{Jalal2020CCTA}
J.~Arabneydi, M.~Roudneshin, and A.~G. Aghdam, ``Reinforcement learning in deep
  structured teams: Initial results with finite and infinite valued features,''
  in \emph{Proceedings of IEEE Conference on Control Technology and
  Applications}, 2020.

\bibitem{van1980note}
E.~E.~C. van Damme, ``A note on {B}asar's "on the uniqueness of the {N}ash
  solution in linear-quadratic differential games",'' \emph{Memorandum COSOR},
  vol. 8006, 1980.

\bibitem{laub1979schur}
A.~Laub, ``A {S}chur method for solving algebraic {R}iccati equations,''
  \emph{IEEE Transactions on automatic control}, vol.~24, no.~6, pp. 913--921,
  1979.

\bibitem{moon2019linear}
J.~Moon and Y.~Kim, ``{L}inear-{E}xponential-{Q}uadratic control for mean field
  stochastic systems,'' \emph{IEEE Transactions on Automatic Control}, 2019.

\bibitem{li2008asymptotically}
T.~Li and J.-F. Zhang, ``Asymptotically optimal decentralized control for large
  population stochastic multiagent systems,'' \emph{IEEE Transactions on
  Automatic Control}, vol.~53, no.~7, pp. 1643--1660, 2008.

\bibitem{Huang2012social}
M.~Huang, P.~E. Caines, and R.~P. Malham{\'e}, ``Social optima in mean field
  {LQG} control: centralized and decentralized strategies,'' \emph{IEEE
  Transactions on Automatic Control}, vol.~57, no.~7, pp. 1736--1751, 2012.

\bibitem{Huang2019}
M.~{Huang} and M.~{Zhou}, ``Linear quadratic mean field games: Asymptotic
  solvability and relation to the fixed point approach,'' \emph{IEEE
  Transactions on Automatic Control}, vol.~65, no.~4, pp. 1397--1412, 2020.

\bibitem{Jalal2019LCSS}
M.~Baharloo, J.~Arabneydi, and A.~G. Aghdam, ``Minmax mean-field team approach
  for a leader-follower network: A saddle-point strategy,'' \emph{IEEE Control
  Systems Letters}, vol.~4, no.~1, pp. 121--126, 2019.

\bibitem{JalalCDC2018}
------, ``Near-optimal control strategy in leader-follower networks: A case
  study for linear quadratic mean-field teams,'' in \emph{Proceedings of the
  \nth{57} IEEE Conference on Decision and Control}, 2018, pp. 3288--3293.

\bibitem{sanjari2020optimal}
S.~S. Sanjari and S.~Yuksel, ``Optimal solutions to infinite-player stochastic
  teams and mean-field teams,'' \emph{IEEE Transactions on Automatic Control,
  DOI: 10.1109/{TAC}.2020.2994899}, 2020.

\bibitem{sanjari2019optimal}
S.~Sanjari and S.~Y{\"u}ksel, ``Optimal policies for convex symmetric
  stochastic dynamic teams and their mean-field limit,'' \emph{arXiv preprint
  arXiv:1903.11476}, 2019.

\bibitem{Huang2010large}
M.~Huang, ``Large-population {LQG} games involving a major player: the {N}ash
  certainty equivalence principle,'' \emph{SIAM Journal on Control and
  Optimization}, vol.~48, no.~5, pp. 3318--3353, 2010.

\bibitem{Masoud2020CDC}
M.~Roudneshin, J.~Arabneydi, and A.~G. Aghdam, ``Reinforcement learning in
  nonzero-sum {L}inear {Q}uadratic deep structured games: Global convergence of
  policy optimization,'' in \emph{Proceedings of the \nth{59} IEEE Conference
  on Decision and Control}, 2020.

\bibitem{Vida2020CDC}
V.~Fathi, J.~Arabneydi, and A.~G. Aghdam, ``Reinforcement learning in linear
  quadratic deep structured teams: Global convergence of policy gradient
  methods,'' in \emph{Proceedings of the \nth{59} IEEE Conference on Decision
  and Control}, 2020.

\bibitem{salhab2019collective}
R.~{Salhab}, R.~P. {Malhame}, and J.~{Le Ny}, ``Collective stochastic discrete
  choice problems: A min-{LQG} dynamic game formulation,'' \emph{IEEE
  Transactions on Automatic Control, DOI: 10.1109/{TAC}.2019.2941443.}, 2019.

\bibitem{Jalal2021CDC_KF}
J.~Arabneydi and A.~G. Aghdam, ``Deep structured teams in arbitrary-size linear
  networks: Decentralized estimation, optimal control and separation
  principle,'' in \emph{Submitted to the \nth{60} IEEE Conference on Decision
  and Control}, 2021.

\bibitem{Jalal2021CDC_MPC}
------, ``Receding horizon control in deep structured teams: A provably
  tractable large-scale approach with application to swarm robotics,'' in
  \emph{Submitted to the \nth{60} IEEE Conference on Decision and Control},
  2021.

\bibitem{carmona2016}
R.~Carmona, F.~Delarue, and D.~Lacker, ``Mean field games with common noise,''
  \emph{Annals of Probability}, vol.~44, no.~6, pp. 3740--3803, 2016.

\bibitem{wang2012mean}
B.-C. Wang and J.-F. Zhang, ``Mean field games for large-population multiagent
  systems with {M}arkov jump parameters,'' \emph{SIAM Journal on Control and
  Optimization}, vol.~50, no.~4, pp. 2308--2334, 2012.

\bibitem{fudenberg1983subgame}
D.~Fudenberg and D.~Levine, ``Subgame-perfect equilibria of finite- and
  infinite-horizon games,'' \emph{Journal of economic theory}, pp. 251--268,
  1983.

\end{thebibliography}
\appendices
\section{Proof of Proposition~\ref{proposition:exchangeable} }\label{sec:proof_proposition:exchangeable}
From \eqref{eq:dynamics-general},  the dynamics of players $i$ and $ j$  may be expanded as follows:
\begin{align}\label{eq:exchanegable-ii}
x^i_{t+1}=&  a^{i,i}_t x^i_t  +a^{i,j}_t x^j_t +b^{i,i}_t u^i_t +b^{i,j}_t u^j_t \nonumber \\
&+ \sum_{k=1, k\neq i,j}^n a^{i,k}_t x^k_t+  \sum_{k=1, k\neq i,j}^n b^{i,k}_t u^k_t+w^i_t,
\end{align}
and 
\begin{align}\label{eq:exchanegable-jj}
x^j_{t+1}=&  a^{j,j}_t x^j_t  +a^{j,i}_t x^i_t +b^{j,j}_t u^j_t +b^{j,i}_t u^i_t \nonumber\\
&+ \sum_{k=1, k\neq i,j}^n a^{j,k}_t x^k_t+  \sum_{k=1, k\neq i,j}^n b^{j,k}_t u^k_t+w^j_t.
\end{align}
Exchange $(x^i_t,u^i_t,w^i_t)$ with $(x^j_t,u^j_t,w^j_t)$ in the above equations and get
\begin{align}\label{eq:exchanegable-ij}
x^j_{t+1}=&  a^{i,i}_t x^j_t  +a^{i,j}_t x^i_t +b^{i,i}_t u^j_t +b^{i,j}_t u^i_t \nonumber \\
&+ \sum_{k=1, k\neq i,j}^n a^{i,k}_t x^k_t+  \sum_{k=1, k\neq i,j}^n b^{i,k}_t u^k_t+w^j_t,
\end{align}
and 
\begin{align}\label{eq:exchanegable-ji}
x^i_{t+1}=&  a^{j,j}_t x^i_t  +a^{j,i}_t x^j_t +b^{j,j}_t u^i_t +b^{j,i}_t u^j_t \nonumber \\
&+ \sum_{k=1, k\neq i,j}^n a^{j,k}_t x^k_t+  \sum_{k=1, k\neq i,j}^n b^{j,k}_t u^k_t+w^i_t.
\end{align}
From  Definition~\ref{def:exchangeable-players}, \eqref{eq:exchanegable-ii} and \eqref{eq:exchanegable-jj} must be equal to \eqref{eq:exchanegable-ji} and \eqref{eq:exchanegable-ij}, respectively. Hence,  for every $ i,j \in \mathbb{N}_n$, we get 
\begin{align*}
a^{i,i}_t=a^{j,j}_t=:\tilde a_t,& \quad &a^{i,j}_t=a^{j,i}_t=:\tilde d_t,\nonumber \\
b^{i,i}_t=b^{j,j}_t=:\tilde b_t,& \quad &b^{i,j}_t=b^{j,i}_t=:\tilde e_t.
\end{align*}
Consequently, the dynamics of player  $i \in \mathbb{N}_n$ may be re-written in the form of \eqref{eq:dynamics-mf} 
where
\begin{align*}
A_t:=&\tilde a_t -\tilde d_t, \quad  &B_t:=&\tilde b_t -\tilde e_t,\nonumber \\
\bar A_t:=& n \tilde{d}_t, \quad &\bar B_t:=&n \tilde e_t.
\end{align*}
Now,  consider the first part of   $c^i_t$, i.e. ${\mathbf x_t}^\intercal Q^i_t {\mathbf x_t}$,  and expand it as follows:
\begin{equation}\label{eq:proof-exchangeable-1}
\begin{aligned}
{\mathbf x_t}^\intercal Q^i_t {\mathbf x_t}=&{x^i_t}^\intercal q^{i,i}_t x^i_t  +\sum_{k=1,  k\neq i}^n {x^i_t}^\intercal (q^{i,k}_t+ ({q^{k,i}_t})^\intercal) x^k_t\\
&+\sum_{k=1, k \neq i}^n{x^k_t}^\intercal q^{k,k}_t x^k_t \\
&+\sum_{k=1, k \neq i}^n \bigg( \sum_{k'=1, k' \neq i,k}^n {x^k_t}^\intercal (q^{k,k'}_t+ ({q^{k',k}_t})^\intercal)x^{k'}_t\bigg).
\end{aligned}
\end{equation}
According to Definition \ref{def:exchangeable-players}, exchanging any two arbitrary players $k,k' \neq i$ must not change $c^i_t$. Then, we have
\begin{equation}\label{eq:proof-exchangeable1}
\tilde s^i_t:=q^{i,k}_t+ ({q^{k,i}_t})^\intercal, \tilde q^i_t:=q^{k,k}_t=q^{k',k'}_t, \hat s^i_t:= q^{k,k'}_t+ ({q^{k,k'}_t})^\intercal.
\end{equation}
Given \eqref{eq:proof-exchangeable1}, re-arrange \eqref{eq:proof-exchangeable-1} as follows:
\begin{align}\label{eq:proof-exchangeable2}
&{\mathbf x_t}^\intercal Q^i_t {\mathbf x_t}={x^i_t}^\intercal q^{i,i}_t x^i_t  +\left(\sum_{k=1}^n {x^i_t}^\intercal \tilde{s}^i_t x^k_t \right) - {x^i_t}^\intercal \tilde{s}^i_t x^i_t \nonumber\\
&\quad + \left( \sum_{k=1}^n {x^k_t}^\intercal \tilde q^i_t x^k_t \right) - {x^i_t}^\intercal \tilde{q}^i_t x^i_t \nonumber \\
&\quad +\sum_{k=1}^n  \sum_{k'=1}^n {x^k_t}^\intercal \hat s^i_t x^{k'}_t - \sum_{k=1}^n {x^k_t}^\intercal \hat s^i_t x^k_t -\sum_{k=1}^n {x^k_t}^\intercal \hat s^i_t x^i_t \nonumber \\
&\quad - \sum_{k'=1}^n {x^i_t}^\intercal \hat{s}^i_t x^{k'}_t +2 {x^i_t}^\intercal \hat{s}^i_t x^i_t.
\end{align}
Define
\begin{align*}
q^i_t:=&q^{i,i}_t -\tilde s^i_t -\tilde q^i_t+2\hat{s}^i_t, \quad  &s^{x,i}_t:=&n(\tilde s^i_t - 2\hat s^i_t),\nonumber \\
p^{x,i}_t:=& n^2 \hat s^i_t , \quad &g^{x,i}_t:=&\tilde q^i_t - \hat s^i_t.
\end{align*}
Then, \eqref{eq:proof-exchangeable2} can be written as
\begin{equation}\label{eq:proof-exchangeable3}
{\mathbf x_t}^\intercal Q^i_t {\mathbf x_t}= {x^i_t}^\intercal q^i_t x^i_t + {x^i_t}^\intercal s^{x,i}_t \bar x_t +  {\bar x_t}^\intercal p^{x,i}_t\bar x_t+ \sum_{k=1}^n{x^k_t}^\intercal g^{x,i}_t x^k_t.
\end{equation}
From Definition \ref{def:exchangeable-players}, exchanging any two arbitrary players $i$ and~$j$  must exchange $c^i_t$ and $c^j_t$. Hence, it results from  \eqref{eq:proof-exchangeable3} that
\begin{align*}
Q_t:=&q^{i}_t=q^{j}_t, \quad  &2S^{x}_t:=&s^{x,i}_t=s^{x,j}_t,\nonumber \\
\bar Q_t:=& p^{x,i}_t=p^{x,j}_t , \quad &\frac{1}{n} G^{x}_t:=&g^{x,i}_t=g^{x,j}_t.
\end{align*}
Similar argument holds for $\mathbf u_t R^i_t \mathbf u_t$.

\section{Proof of Theorem~\ref{thm:aux}}\label{sec_proof_1}
From~\cite[Theorem 5]{basar1976uniqueness} and \cite[Proposition 2]{van1980note}, there is no loss of optimality in ignoring the history of states and restricting attention to Markovian strategies.  Fix the strategies of all players but  player~$i$, i.e. $\mathbf g^{-i}_{1:T}$. The resultant optimization problem from the player $i$'s view is Markovian; hence,  one can write the following dynamic program to identify the best response strategy of player $i$:
\begin{equation}\label{eq:DP-general-form}
V^i_{t}(\tilde{\mathbf x}_t)= \min_{u^i_t}(\Exp{c^i_t(\tilde{\mathbf x}_t,\tilde{\mathbf u}_t)+V^i_{t+1}(\tilde{\mathbf x}_{t+1}) \mid \tilde{\mathbf x}_t, \tilde{\mathbf u}_t }),
\end{equation}
where $V^i_{T+1}(\tilde{\mathbf{x}}_{T+1})= 0$.  Notice that   the value function $V^i_t(\tilde{\mathbf x}_t)$  depends on $\mathbf g^{-i}_{1:T}$, which naturally  leads to a fixed-point equation. We will show in the sequel that there is only one consistent
solution $V^i_t(\tilde{\mathbf x}_t)$, $i \in \mathbb{N}_n$, to construct a sequential Nash
equilibrium under Assumptions~\ref{ass:invertible},~\ref{ass:invertible_infinite_pop} and \ref{ass:collaborative_cost} (i.e., the fixed-point equation has a unique solution).   To this end,
we  prove  by backward induction that the value function of player $i$ at time~$t \in \mathbb{N}_{T+1}$ takes the following form:
 \begin{multline}\label{eq:value-function-form}
V^i_t(\tilde{\mathbf x}_t)= \begin{bmatrix}
\Delta x^i_t \\
\bar x_t
\end{bmatrix}^\intercal \mathbf P^{\alpha^i_n}_t \begin{bmatrix}
\Delta x^i_t \\
\bar x_t
\end{bmatrix}+ \ell^{i}_t\\
+ \sum_{j \neq i} \alpha^j_n (\Delta x^j_t)^\intercal P^{\alpha^i_n}_t (\Delta x^j_t) - \frac{(\alpha^i_n)^2}{1-\alpha^i_n} (\Delta x^i_t)^\intercal  P^{\alpha^i_n}_t (\Delta x^i_t).
\end{multline}
where  
\begin{equation}\label{eq:variance_o}
\begin{cases}
\ell^{i}_t:=\ell^{i}_{t+1}+\TR \big(\VAR(\VEC(\Delta w^i_t,\bar w_t))\mathbf P^{\alpha^i_n}_{t+1}\big)  \\
\quad  + \sum_{j \neq i}^n \hspace{-.1cm}\alpha^j_n  \TR \big(\VAR(\Delta w^j_t) P^{\alpha^i_n}_{t+1}\big)- \frac{(\alpha^i_n)^2\TR (\VAR(\Delta w^i_t) P^{\alpha^i_n}_{t+1})}{1-\alpha^i_n} , \\
\ell^{i}_{T+1}=0.
\end{cases}
\end{equation}

 It is  straightforward to verify that~\eqref{eq:value-function-form} holds at $t=T+1$ due to  the boundary conditions. 
  Suppose that~\eqref{eq:value-function-form} holds at $t+1$:
\begin{multline}\label{eq:value-function-form-t+1}
V^i_{t+1}(\tilde{\mathbf x}_{t+1})= \begin{bmatrix}
\Delta x^i_{t+1} \\
\bar x_{t+1}
\end{bmatrix}^\intercal \mathbf P^{\alpha^i_n}_{t+1}\begin{bmatrix}
\Delta x^i_{t+1} \\
\bar x_{t+1}
\end{bmatrix}+\ell^{i}_{t+1}\\
+ \sum_{j \neq i}\alpha^j_n (\Delta x^j_{t+1})^\intercal P^{\alpha^i_n}_{t+1} \Delta x^j_{t+1} - \frac{(\alpha^i_n)^2}{1-\alpha^i_n} (\Delta x^i_{t+1})^\intercal P^{\alpha^i_n}_{t+1} \Delta x^i_{t+1}.
\end{multline}
It is desired now to show that~\eqref{eq:value-function-form} holds at time $t$ as well.  From  \eqref{eq:cost-breve-bar}, \eqref{eq:DP-general-form}, and  \eqref{eq:value-function-form-t+1},  it follows that:
\begin{multline}
V^i_t(\tilde{\mathbf{x}}_t)=\min_{u^i_t}   \mathbb{E}\begin{bmatrix}
\Delta x^i_t \\
\bar x_t
\end{bmatrix}^\intercal \hspace{-.1cm} \mathbf Q_t^{\alpha^i_n} \hspace{-.1cm} \begin{bmatrix}
\Delta x^i_t \\
\bar x_t
\end{bmatrix}
 +
 \begin{bmatrix}
\Delta u^i_t \\
\bar u_t \\
 \end{bmatrix}^\intercal \hspace{-.1cm} \mathbf  R_t^{\alpha^i_n}\hspace{-.1cm} \begin{bmatrix}
\Delta u^i_t \\
\bar u_t \\
 \end{bmatrix} \\
+ \sum_{j \neq i} \alpha^j_n(\Delta x^j_t)^\intercal G^x_t (\Delta x^j_t) - \frac{(\alpha^i_n)^2}{1-\alpha^i_n} (\Delta x^i_t)^\intercal G^x_t (\Delta x^i_t)\\
+ \sum_{j \neq i} \alpha^j_n(\Delta u^j_t)^\intercal G^u_t (\Delta u^j_t) - \frac{(\alpha^i_n)^2}{1-\alpha^i_n} (\Delta u^i_t)^\intercal G^u_t (\Delta u^i_t)\\
+\begin{bmatrix}
\Delta x^i_{t+1} \\
\bar x_{t+1}
\end{bmatrix}^\intercal \mathbf P^{\alpha^i_n}_{t+1} \begin{bmatrix}
\Delta x^i_{t+1} \\
\bar x_{t+1}
\end{bmatrix}
 +\ell^{i}_{t+1} +\sum_{j \neq i} \alpha^j_n(\Delta x^j_{t+1})^\intercal \\ \times P^{\alpha^i_n}_{t+1} \Delta x^j_{t+1} - \frac{(\alpha^i_n)^2}{1-\alpha^i_n} (\Delta x^i_{t+1})^\intercal P^{\alpha^i_n}_{t+1} \Delta x^i_{t+1}.
\end{multline}

Incorporating~\eqref{eq:dynamics_joint} in the above equation yields:
\begin{multline}\label{eq:proof-aux-value}
V^i_t(\tilde{\mathbf{x}}_t)=\min_{u^i_t}   \mathbb{E}\begin{bmatrix}
\Delta x^i_t \\
\bar x_t
\end{bmatrix}^\intercal \hspace{-.1cm} \mathbf Q_t^{\alpha^i_n}  \hspace{-.1cm} \begin{bmatrix}
\Delta x^i_t \\
\bar x_t
\end{bmatrix}
 +
 \begin{bmatrix}
\Delta u^i_t \\
\bar u_t \\
 \end{bmatrix}^\intercal \hspace{-.1cm} \mathbf  R_t^{\alpha^i_n}\hspace{-.1cm}\begin{bmatrix}
\Delta u^i_t \\
\bar u_t \\
 \end{bmatrix} \\
+ \sum_{j \neq i} \alpha^j_n(\Delta x^j_t)^\intercal G^x_t (\Delta x^j_t) - \frac{(\alpha^i_n)^2}{1-\alpha^i_n} (\Delta x^i_t)^\intercal G^x_t (\Delta x^i_t)\\
+ \sum_{j \neq i} \alpha^j_n(\Delta u^j_t)^\intercal G^u_t (\Delta u^j_t) - \frac{(\alpha^i_n)^2}{1-\alpha^i_n} (\Delta u^i_t)^\intercal G^u_t (\Delta u^i_t)\\
+ ( \mathbf A_t  \begin{bmatrix}
\Delta x^i_{t} \\
\bar x_{t}
\end{bmatrix}+ \mathbf B_t    \begin{bmatrix}
\Delta u^i_{t} \\
\bar u_{t}
\end{bmatrix}+  \begin{bmatrix}
\Delta w^i_{t} \\
\bar w_{t}
\end{bmatrix} )^\intercal  \mathbf P^{\alpha^i_n}_{t+1} ( \mathbf A_t  \begin{bmatrix}
\Delta x^i_{t} \\
\bar x_{t}
  \end{bmatrix}\\+ \mathbf B_t   \begin{bmatrix}
\Delta u^i_{t} \\
\bar u_{t}
  \end{bmatrix}+ \begin{bmatrix}
\Delta w^i_{t} \\
\bar w_{t}
  \end{bmatrix} )   +\ell^{i}_{t+1}
+  \sum_{j \neq i}  \alpha^j_n (A_t \Delta x^j_{t} + B_t \Delta u^j_t\\+ \Delta w^j_t)^\intercal  P^{\alpha^i_n}_{t+1} (A_t \Delta x^j_{t}+ B_t \Delta u^j_t+ \Delta w^j_t)- \frac{(\alpha^i_n)^2}{1-\alpha^i_n}  (A_t \Delta x^i_{t}\\+ B_t \Delta u^i_t+ \Delta w^i_t)^\intercal P^{\alpha^i_n}_{t+1} (A_t \Delta x^i_{t}+ B_t \Delta u^i_t+ \Delta w^i_t).
\end{multline} 
In order to find a minimizer $u^i_t$,  one can  differentiate \eqref{eq:proof-aux-value} with respect to $u^i_t$ and set the derivative  to zero.  According to~\eqref{eq:def-breve},   the following relations hold for  any $i,j  \in \mathbb{N}_n$, $j \neq i$:  
\begin{equation}\label{eq:u-derivatice-relation}
\dfrac{\partial \Delta u^i_t}{\partial u^i_t}=1 - \alpha^i_n, \quad \dfrac{\partial \bar  u_t}{\partial u^i_t}=\alpha^i_n, \quad \dfrac{\partial \Delta u^j_t}{\partial u^i_t}=-\alpha^i_n.
\end{equation} 
From~\eqref{eq:u-derivatice-relation} and   after eliminating the terms depending on local noises (since they are  independent from   actions and  have zero mean), one arrives at:
\begin{align}\label{eq:proof_centralized_equality}
& \begin{bmatrix}
(1-\alpha^i_n)\mathbf I_{d_u \times d_u}\\
\alpha^i_n \mathbf I_{d_u \times d_u}
\end{bmatrix}^\intercal \mathbf R_t^{\alpha^i_n} \begin{bmatrix}
\Delta u^i_{t} \\
\bar u_{t}
\end{bmatrix}-\alpha^i_n \big( \sum_{j=1}^n  \alpha^j_n(\Delta u^j_t)^\intercal G^u_t \big)      \nonumber          \\
& +\begin{bmatrix}
(1-\alpha^i_n)\mathbf I_{d_u \times d_u}\\
\alpha^i_n \mathbf I_{d_u \times d_u}
\end{bmatrix}^\intercal  \mathbf  B_t^\intercal \mathbf P^{\alpha^i_n}_{t+1} \mathbf A_t \begin{bmatrix}
\Delta x^i_{t} \\
\bar x_{t}
\end{bmatrix} \nonumber \\                      
&+  \begin{bmatrix}
(1-\alpha^i_n)\mathbf I_{d_u \times d_u}\\
\alpha^i_n \mathbf I_{d_u \times d_u}
\end{bmatrix}^\intercal \mathbf B_t^\intercal \mathbf P^{\alpha^i_n}_{t+1} \mathbf B_t  \begin{bmatrix}
\Delta u^i_{t} \\
\bar u_{t}
\end{bmatrix}      -\alpha^i_n  \big( 
\sum_{j \neq i}  \alpha^j_n\nonumber \\ 
&\times  (\Delta x^j_t)^\intercal A_t^\intercal P^{\alpha^i_n}_{t+1} B_t + \alpha^i_n (\Delta x^i_t)^\intercal A_t^\intercal P^{\alpha^i_n}_{t+1} B_t 
 \big)          - \alpha^i_n\big(\sum_{j \neq i} \alpha^j_n  \nonumber  \\
 &\times (\Delta u^j_t)^\intercal  B_t^\intercal P^{\alpha^i_n}_{t+1} B_t + \alpha^i_n (\Delta u^i_t)^\intercal B_t^\intercal P^{\alpha^i_n}_{t+1} B_t 
 \big)=\mathbf 0.
\end{align}
Due to the linear dependence introduced by the gauge transformation, i.e., $\sum_{j=1}^n \alpha^j_n  \Delta x^j_t= \mathbf{0}_{d_x \times 1}$ and $\sum_{j=1}^n \alpha^j_n \Delta u^j_t=\mathbf{0}_{d_u \times 1}$,  equation~\eqref{eq:proof_centralized_equality} is simplified  as:
\begin{multline}\label{eq:eventually-u-derivative}
 \begin{bmatrix}
(1-\alpha^i_n)\mathbf I_{d_u \times d_u}\\
\alpha^i_n \mathbf I_{d_u \times d_u}
\end{bmatrix}^\intercal \left(\mathbf R^{\alpha^i_n}_t + \mathbf B_t^\intercal \mathbf  P^{\alpha^i_n}_{t+1} \mathbf B_t\right) \begin{bmatrix}
\Delta u^i_{t} \\
\bar u_{t}
\end{bmatrix} =\\
 -\begin{bmatrix}
(1-\alpha^i_n)\mathbf I_{d_u \times d_u}\\
\alpha^i_n \mathbf I_{d_u \times d_u}
\end{bmatrix}^\intercal \mathbf B_t^\intercal  \mathbf P^{\alpha^i_n}_{t+1} \mathbf A_t \begin{bmatrix}
\Delta x^i_{t} \\
\bar x_{t}
\end{bmatrix}.
\end{multline}
Equation~\eqref{eq:eventually-u-derivative}  can be rewritten  in terms of matrices $F_t(\alpha^i_n) $, $\bar F_t(\alpha^i_n)$, $K_t(\alpha^i_n)$ and $\bar K_t(\alpha^i_n)$,  defined in \eqref{eq:breve-f},  as follows:
\begin{equation}\label{eq:proof_aux_final}
F_t(\alpha^i_n)  \Delta u^i_t + \bar F_t(\alpha^i_n) \bar u_t= K_t(\alpha^i_n) \Delta x^i_t + \bar K_t(\alpha^i_n) \bar x_t. 
\end{equation}
It is important to notice that equation~\eqref{eq:proof_aux_final} holds irrespective of the strategies  of other players. Equivalently, equation~\eqref{eq:proof_aux_final}  can   be expressed  in terms of the actions of other players, i.e.
\begin{multline}\label{eq:proof_aux_h}
((1-\alpha^i_n) F_t(\alpha^i_n) + \alpha^i_n \bar F_t(\alpha^i_n) ) u^i_t=(\bar K_t(\alpha^i_n)  - K_t(\alpha^i_n)  ) \\
\times \sum_{j \neq i }\alpha^j_n x^j_t +((1-\alpha^i_n) K_t(\alpha^i_n) + \alpha^i_n \bar K_t(\alpha^i_n) ) x^i_t \\
 -( \bar F_t(\alpha^i_n)  - F_t(\alpha^i_n)  ) \sum_{j \neq i } \alpha^j_n u^j_t,
\end{multline}
where \eqref{eq:proof_aux_h} constitutes the standard fixed-point equation  across all players $i \in \mathbb{N}_n$. Since the control laws of all other players, i.e. $\mathbf g^{-i}_{t}$,  at time $t$ are fixed and the action $u^i_t$  has no effect on  its past (i.e.  $\mathbf x_{1:t}$), one can conclude that  $\sum_{j \neq i } \alpha^j_n u^j_t=\sum_{j \neq i } \alpha^j_n g^j_t(\mathbf x_{1:t})$ is independent of action $u^i_t$.  Therefore, action $u^i_t$ in equation~\eqref{eq:proof_aux_h}  is   the unique minimizer and the best-response action of player~$i$  if   the second derivative, i.e., matrix $(1-\alpha^i_n) F_t(\alpha^i_n) + \alpha^i_n \bar F_t(\alpha^i_n)$ is positive definite,   guaranteeing strict convexity. 

We now show that~\eqref{eq:proof_aux_final} (equivalently~\eqref{eq:proof_aux_h}) admits only one consistent solution across all  players  under Assumptions~\ref{ass:invertible},~\ref{ass:invertible_infinite_pop} and \ref{ass:collaborative_cost}.  For this reason, we consider  three different cases:
\begin{enumerate}
\item Let Assumption~\ref{ass:invertible} hold and   $\alpha^i_n=1/n$. The following equalities are obtained by  averaging \eqref{eq:proof_aux_final}  over all players and upon  noting   that $\sum_{i=1}^n  \alpha^i_n \Delta x^i_t= \mathbf{0}_{d_x \times 1}$ and $\sum_{i=1}^n  \alpha^i_n \Delta u^i_t=\mathbf{0}_{d_u \times 1}$:  
\begin{equation} \label{eq:proof_aux_breve_bar_u_1}
 \Delta u^i_t= F^{-1}_t(\frac{1}{n}) K_t(\frac{1}{n}) \Delta x^i_t,\quad \bar u_t= \bar F^{-1}_t(\frac{1}{n})  \bar K_t(\frac{1}{n}) \bar x_t.
\end{equation}
Note that averaging a set of linear equations does not affect their solution, implying~\eqref{eq:proof_aux_breve_bar_u_1} is the unique solution for~\eqref{eq:proof_aux_h}.
\item  Define $\delta_n:=[- \beta_{\text{max}}/n, \beta_{\text{max}}/n]$ and  let Assumption~\ref{ass:invertible_infinite_pop} hold,  where  $\alpha^i_n=\beta^i/n$, $\forall i \in \mathbb{N}_n$, is a monotonically decreasing (or increasing) sequence.   Pick a sufficiently large $n_0$ such that $\alpha^i_n \in \delta_n \subseteq \delta_{n_0}$. Under Assumption~\ref{ass:invertible_infinite_pop}, $F^{-1}_t(\alpha^i_n)$ and $\bar F^{-1}_t(\alpha^i_n)$,  $\forall \alpha^i_n \in \delta_n$,  exist and are uniformly bounded  with respect to $\alpha^i_n$. In addition, from~\eqref{eq:riccati-bar-m} and~\eqref{eq:breve-f},  it follows that $F^{-1}_t(\alpha^i_n)$ and $\bar F^{-1}_t(\alpha^i_n)$, $\forall \alpha^i_n \in \delta_n$, are continuous in $\alpha^i_n$, which implies that  the infinite-population limits exist (due to the uniform boundedness and continuity), i.e.,  $\lim_{n \rightarrow \infty} \mathbf P_t^{\alpha^i_n}=\mathbf P_t^0$, $\lim_{n \rightarrow \infty} F_t(\alpha^i_n)=F_t(0)$, $\lim_{n \rightarrow \infty} K_t(\alpha^i_n)=K_t(0)$, and so on. Therefore,   the infinite-population version of the equality in~\eqref{eq:proof_aux_final} is
\begin{equation}\label{eq:proof_aux_final_test}
F_t(0)  \Delta u^i_t + \bar F_t(0) \bar u_t= K_t(0) \Delta x^i_t + \bar K_t(0) \bar x_t. 
\end{equation}
Subsequently, one solution for~\eqref{eq:proof_aux_final_test} is
\begin{equation} \label{eq:proof_aux_breve_bar_u_2}
 \Delta u^i_t= (F_t(0))^{-1} K_t(0) \Delta x^i_t,\quad 
   \bar u_t= (\bar F_t(0))^{-1}  \bar K_t(0) \bar x_t.
\end{equation}
To establish the fact that the above solution is the unique solution, one can utilize an  averaging technique, similar to the one used for homogeneous case,  upon  noting   that $\sum_{i=1}^n  \alpha^i_n \Delta x^i_t= \mathbf{0}_{d_x \times 1}$ and $\sum_{i=1}^n  \alpha^i_n \Delta u^i_t=\mathbf{0}_{d_u \times 1}$. 
\item Let Assumption~\ref{ass:collaborative_cost} hold.  Since $\alpha^i_n$ is positive, one can multiply the per-step cost of player $i$ by the positive number $\frac{1-\alpha^i_n}{\alpha^i_n}$, which does not affect the best-response optimization at  player $i \in \mathbb{N}_n$. Next, one can use a backward induction to show  that   $\mathbf P^{\alpha^i_n}_t$ is diagonal, i.e.,   ${\mathbf P_t^{\alpha^i_n}}^{\hspace{0cm}(2,1)}$ and     ${\mathbf P_t^{\alpha^i_n}}^{\hspace{0cm}(1,2)}$ are zero. In such a case,~\eqref{eq:eventually-u-derivative} holds irrespective of $\alpha^i_n$ because all terms happen to have the same  coefficient $(1-\alpha^i_n)$, which can be removed from both sides of the equality, i.e, 
\begin{align}\label{eq:proof_thm1_aux}
&(1-\alpha^i_n)(G^u_t+ B_t^\intercal \mathbf P^{1,1}_{t+1} B_t) \Delta u^i_t + (1-\alpha^i_n)(\bar R_t+G^u_t \nonumber \\
&+ (B_t+\bar B_t)^\intercal \mathbf P^{2,2}_{t+1} (B_t+\bar B_t) )\bar u_t=(1-\alpha^i_n) (B_t^\intercal \mathbf P^{1,1}_{t+1} \nonumber \\
&\times A_t) \Delta x^i_t+(1-\alpha^i_n)((B_t+\bar B_t)^\intercal \mathbf P^{2,2}_{t+1} (A_t+\bar A_t))\bar x_t.
\end{align}
Thus, weighted averaging \eqref{eq:proof_thm1_aux}  over all players  yields:\begin{align} \label{eq:proof_aux_breve_bar_u_3}
 \Delta u^i_t&=(G^u_t+ B_t^\intercal \mathbf P^{1,1}_{t+1} B_t)^{-1}  B_t^\intercal \mathbf P^{1,1}_{t+1} A_t) \Delta x^i_t, \nonumber \\
 \bar u_t&=(\bar R_t+ G^u_t+ (B_t+\bar B_t)^\intercal \mathbf P^{2,2}_{t+1} (B_t+\bar B_t))^{-1} \nonumber \\
&\quad \times ( B_t+\bar B_t)^\intercal \mathbf P^{2,2}_{t+1} (A_t+\bar A_t)) \bar x_t.
\end{align}
\end{enumerate}
The last step in the induction  is to  incorporate~\eqref{eq:proof_aux_breve_bar_u_1} and~\eqref{eq:proof_aux_breve_bar_u_2} into \eqref{eq:proof-aux-value} in order to  retrieve~\eqref{eq:value-function-form}, where for $\alpha \in \{0, \frac{1}{n}\}$,
 \begin{equation}
 \Compress
  \mathbf P^{\alpha}_t= \mathbf  Q^{\alpha}_t+ (\boldsymbol \theta^{\alpha}_t)^\intercal \mathbf R^{\alpha}_t \boldsymbol \theta^{\alpha}_t+ (\mathbf A_t+\mathbf B_t \boldsymbol \theta^{\alpha}_t)^\intercal \mathbf P^{\alpha}_{t+1}(\mathbf A_t+\mathbf B_t \boldsymbol \theta^{\alpha}_t),
  \end{equation}
 where $\boldsymbol \theta^{\alpha}_t$ is  described by~\eqref{eq:breve-f}. Similarly, one can plug~\eqref{eq:proof_aux_breve_bar_u_3} into \eqref{eq:proof-aux-value} in order to  obtain~\eqref{eq:Riccati_collaborative}. 
  
Finally, it is concluded that strategy~\eqref{eq:thm-optimal-mfs-u_1} is the unique SPNE under Assumption~\ref{ass:invertible} because it satisfies the unique best-response condition in~\eqref{eq:proof_aux_h} and unique consistent solution across all players in~\eqref{eq:proof_aux_breve_bar_u_1}, upon noting that the original variables can be uniquely computed from the auxiliary variables as follows:  $x^i_t=\Delta x^i_t+\bar x_t$ and $u^i_t=\Delta u^i_t + \bar u_t$.  Similarly, strategy~\eqref{eq:thm-optimal-mfs-u_2} is the unique SPNE under Assumption~\ref{ass:invertible_infinite_pop} because it satisfies the unique best-response condition in~\eqref{eq:proof_aux_h} and unique consistent solution across all players in~\eqref{eq:proof_aux_breve_bar_u_2}.  For the collaborative cost, strategy~\eqref{eq:thm-optimal-mfs-u_3} is the unique SPNE under Assumption~\ref{ass:collaborative_cost}  because it satisfies  the unique consistent solution across all players in~\eqref{eq:proof_aux_breve_bar_u_3}, and the unique best-response condition, where the second derivative  is always positive definite, which is $((1-\alpha^i_n)(G^u_t+ B_t^\intercal \mathbf P^{1,1}_{t+1} B_t)^{-1}  B_t^\intercal)  
+ \alpha^i_n (\bar R_t+ G^u_t+ (B_t+\bar B_t)^\intercal \mathbf P^{2,2}_{t+1} (B_t+\bar B_t))^{-1})$, $\alpha^i_n >0$, guaranteeing strict convexity.

\section{Proof of Proposition~\ref{cor:decoupled1}}\label{sec_proof_2}
For a sufficiently large $n$, $\alpha \in [\frac{-\beta_{max}}{n}, \frac{\beta_{max}}{n}]$  can be  chosen   sufficiently small so that $F_t(\alpha) \approx (1-\alpha) (R_t +B_t^\intercal {\mathbf {P}_{t+1}^{\alpha}}^{ \hspace{-.2cm} 1,1} B_t) $ and $\bar F_t(\alpha)  \approx (1-\alpha)(R_t+S^u_t +B_t^\intercal {\mathbf {P}_{t+1}^{\alpha}}^{ \hspace{-.2cm} 2,1} B_t)$.  Consequently, matrices $F_t(\alpha)$ and $\bar F_t(\alpha)$ become positive definite, and are uniformly bounded in  $\alpha$ and $T$.   Therefore, Assumption~\ref{ass: mean-field_decoupled} satisfies Assumption~\ref{ass:invertible_infinite_pop}, and the infinite-population solution of the non-standard Riccati equation~\eqref{eq:riccati-bar-m} can be  reformulated  as follows:  ${\mathbf {P}_{t}}^{ 2,2}:={\mathbf {P}_{t}^{0}}^{ 2,1}$ and ${\mathbf {P}_{t}}^{1,1}:={\mathbf {P}_{t}^{0}}^{1,1}$.

\section{Proof of Lemma~\ref{lemma:PG_asymptotic}}\label{sec:proof_lemma:PG_asymptotic}
From Definition~\ref{def:performance_gap},   $J^i_n(\hat{\mathbf g}^i_n,\hat{\mathbf g}^{-i}_n) - J^i_n(\mathbf g^{\ast,i}_n, \mathbf g^{\ast,-i}_n)$,  $\forall i \in \mathbb{N}_n$, converges to zero, almost surely,  because the dynamics~\eqref{eq:dynamics-mf}, cost functions~\eqref{eq:cost-mfs}, and strategies~\eqref{eq:suppose_strategy} and~\eqref{eq:action-ns} are continuous and uniformly bounded in $n$ due to Assumptions~\ref{assump:independent_n} and~\ref{ass:existence_finite}, upon noting that at any time $t \in \mathbb{N}_T$, the deep state $\bar x_t$ in~\eqref{eq:dynamics_joint} under strategy $\mathbf g^\ast_n$, the prediction $z^n_t$ in~\eqref{eq:deterministic_process},  and the deep state $\hat{\bar x}_t$  under strategy~\eqref{eq:action-ns} converge to the same limit given by $z^\infty_t$ in~\eqref{eq:deterministic_process-inf} under strategy $\mathbf g^\ast_\infty$,  according to the  strong law of large numbers under Assumption~\ref{assump:iid}.    In particular, $J^i_n(\hat{\mathbf g}^i_n,\hat{\mathbf g}^{-i}_n)$ and  $J^i_n(\mathbf g^{\ast,i}_n,\mathbf g^{\ast,-i}_n)$ can  be described by   polynomial regressions  in terms of $\mu_x$, $\VAR(x^i_1)$ and  $\VAR(w^i_t)$, $t \in \mathbb{N}_T$,  because: (a)  the state dynamics are linear  under strategies $\hat{\mathbf g}_n$ and $\mathbf g^\ast_n$; (b)  local noises are zero-mean; (c)  per-step cost~\eqref{eq:cost-mfs} is a quadratic function of the states  under strategies $\hat{\mathbf g}_n$ and $\mathbf g^\ast_n$, and (d) the initial states as well as local noises are independent,  meaning that $\VAR(\bar x_1)=\frac{1}{n^2}\sum_{i=1}^n \VAR(x^i_1)$ and  $\VAR(\bar w_t)=\frac{1}{n^2}\sum_{i=1}^n \VAR(w^i_t)$.  Therefore, $J^i_n(\hat{\mathbf g}^i_n,\hat{\mathbf g}^{-i}_n)$ and  $J^i_n(\mathbf g^{\ast,i}_n,\mathbf g^{\ast,-i}_n)$  are bounded and continuous with respect to $n$ since the resultant  weighting matrices  as well as  auto-covariance matrices are all bounded and continuous with respect to~$n$, in view of Assumptions~\ref{assump:independent_n} and~\ref{assump:iid}.  Due to the strong law of large numbers, $\hat{\mathbf g}_n$ converges to $\mathbf g^\ast_n$, almost surely; hence, $J^i_n(\hat{\mathbf g}^i_n,\hat{\mathbf g}^{-i}_n)$ and  $J^i_n(\mathbf g^{\ast,i}_n,\mathbf g^{\ast,-i}_n)$  have the same limit,  which is $J^i_\infty(\hat{\mathbf g}^{i}_\infty,\hat{\mathbf g}^{-i}_\infty)=J^i_\infty(\mathbf g^{\ast,i}_\infty,\mathbf g^{\ast,-i}_\infty)$. A similar argument holds for  $J^i_n(\hat{\mathbf g}^i_\infty,\hat{\mathbf g}^{-i}_\infty) - J^i_n(\mathbf g^{\ast,i}_n, \mathbf g^{\ast,-i}_n)$,  $\forall i \in \mathbb{N}_n$.

\section{Proof of Lemma~\ref{lemma:rationality_gap}}\label{sec:proof_lemma:rationality_gap}
Suppose player $i \in \mathbb{N}_n$ plays  strategy  $\mathbf g^i \in \GNS$.  Let $y^j_t$ and $v^j_t$ denote,  respectively, the state and action of player $j \in \mathbb{N}_n$ at time $t \in \mathbb{N}_T$  under the joint strategy $\{\mathbf g^i, \mathbf g^{\ast, -i}_n\}$.  Similarly, let  $\hat y^j_t$ and $\hat v^j_t$ denote the state and action of player $j \in \mathbb{N}_n$  under the joint strategy $\{\mathbf g^i, \hat{ \mathbf g}^{-i}_n\}$. From~\eqref{eq:cost-mfs} and~\eqref{eq:total_cost}, one can write $J^i_n(\mathbf g^i,\mathbf g^{\ast, -i}_n)$ as a summation of quadratic terms (continuous functions) of local states $\{y^j_t\}_{j=1}^n$ and  local actions $\{v^j_t\}_{j=1}^n$ over the game  horizon $T$. Analogously,  one can express $J^i_n(\mathbf g^i, \hat{\mathbf g}^{ -i}_n)$ as a summation of quadratic terms of local states $\{\hat y^j_t\}_{j=1}^n$ and  local actions $\{\hat v^j_t\}_{j=1}^n$.  In what follows, we show that for every strategy $ \mathbf g^i \in \GNS$ under Assumption~\ref{ass:rationality_gap}, the set $\{y^j_t,v^j_t\}_{j=1}^n$  converges to  the set $\{\hat y^j_t,\hat v^j_t\}_{j=1}^n$,  almost surely, as $n \rightarrow \infty$.  Due to the fact that the cost function is uniformly bounded and continuous  with respect to  $n$ under Assumption~\ref{ass:invertible_infinite_pop}, and  continuous  in local states and local actions,   the gap  $ \Delta J_R( \mathbf g^i, \hat{\mathbf g}_n, \mathbf g^\ast_n)= J^i_n(\mathbf g^i, \hat{\mathbf g}^{-i}_n) -J^i_n(\mathbf g^i, \mathbf g^{\ast,-i}_n) $ converges to zero, as $n \rightarrow \infty$. In particular, for every $j \in \mathbb{N}_n$, one has:
\begin{align}
y^j_{t+1}=A_t y^j_t+B_t v^j_t+\bar A_t \bar y_t +\bar B_t \bar v_t+ w^j_t,\\
\hat y^j_{t+1}=A_t \hat y^j_t+B_t \hat v^j_t+\bar A_t \hat{\bar y}_t +\bar B_t \hat{\bar v}_t+ w^j_t,
\end{align}
where
\begin{align}
v^i_t&=g^i_t(y^i_{1:t}),\quad v^j_t=\theta^n_t y^j_t+(\bar \theta^n_t - \theta^n_t)\bar y_t, j \neq i, \\
\hat v^i_t&=g^i_t(\hat y^i_{1:t}),\quad \hat v^j_t=\theta^n_t \hat y^j_t+(\bar \theta^n_t - \theta^n_t)z^n_t, j \neq i.
\end{align}
In addition, 
\begin{equation}\label{eq:proof_bar_y}
\bar y_{t+1}=(A_t +\bar A_t)\bar y_t+(B_t +\bar B_t) \bar  v_t+ \bar w_t,
\end{equation}
\begin{equation}\label{eq:proof_hat_bar_y}
\hat{\bar y}_{t+1}=(A_t +\bar A_t) \hat{\bar y}_t+(B_t+\bar B_t) \hat{\bar v}_t+\bar  w_t,
\end{equation}
where
\begin{align}
\bar v_t&= \alpha^i_n\big(g^i_t(y^i_{1:t}) - \theta^n_t y^i_t -(\bar \theta^n_t - \theta^n_t) \bar y_t \big)+ \bar \theta^n_t \bar y_t, \\
\hat{\bar v}_t&= \alpha^i_n\big(g^i_t(\hat y^i_{1:t}) - \theta^n_t \hat y^i_t -(\bar \theta^n_t - \theta^n_t) z^n_t \big)+ \bar \theta^n_t z^n_t+ \theta^n_t (\hat{\bar y}_t -z^n_t).
\end{align}
From Assumptions~\ref{ass:invertible_infinite_pop} and~\ref{ass:existence_finite},  strategy~\eqref{eq:suppose_strategy} is uniformly bounded and continuous in $n$.   In this case, as $n$ grows to infinity, $\bar w_t$ converges to zero due to the strong law of large numbers and Assumption~\ref{assump:iid}. On the other hand, $ \lim_{n \rightarrow \infty} \alpha^i_n=0$; hence,  one can inductively show   that $\bar y_t$ in~\eqref{eq:proof_bar_y}, $\hat{\bar y}_t$ in~\eqref{eq:proof_hat_bar_y}, and $z^n_t$ in~\eqref{eq:deterministic_process} converge to the same limit, given by~\eqref{eq:deterministic_process-inf}, almost surely. In addition,  it can be shown  that $g^i_t(y^i_{1:t}) =g^i_t(\hat y^i_{1:t})$, almost surely, under Assumption~\ref{ass:rationality_gap}.II  as $n \rightarrow \infty$, and  $g^i_t(y^i_{1:t})=g^i_t(\hat y^i_{1:t})$ under Assumption~\ref{ass:rationality_gap}.I everywhere for any arbitrary $n$. Consequently,  all local states and local actions under the above two different joint strategies converge to the same limit,  as $n \rightarrow \infty$,  irrespective of the strategy $\mathbf g^i$.   A similar argument holds for $\hat{\mathbf  g}_\infty$, where  the actions under $\hat{\mathbf  g}_\infty$ are given by:
$
\hat v^j_t=\theta^\infty_t \hat y^j_t+(\bar \theta^\infty_t - \theta^\infty_t)z^\infty_t, j \neq i,$ and $
\hat{\bar v}_t= \alpha^i_n\big(g^i_t(\hat y^i_{1:t}) - \theta^\infty_t \hat y^i_t -(\bar \theta^\infty_t - \theta^\infty_t) z^\infty_t \big)+ \bar \theta^\infty_t z^\infty_t+ \theta^\infty_t (\hat{\bar y}_t -z^\infty_t).
$
\section{Proof of Lemma~\ref{lemma:breve_equality}}\label{sec:proof_lemma:breve_equality}
The proof follows from an induction.  Initially,  $\Delta x^i_1=x
^i_1 -\bar x_1=\hat x^i_1-\hat {\bar x}_1=\Delta \hat{x}^i_1$ and $\Delta u^i_1=\theta^n_1 \Delta x^i_1=\theta^n_1  \Delta \hat x^i_1=\Delta \hat{u}^i_1$. Suppose $\Delta x^i_t=\Delta \hat x^i_t$ and $\Delta u^i_t=\Delta \hat u^i_t$ at time $t$.  Then, it  results  from~\eqref{eq:dynamics-mf} and~\eqref{eq:dynamics-ns} that:
$\Delta x^i_{t+1}=A_t \Delta x^i_t+B_t \Delta u^i_t+\Delta w^i_t 
=A_t \Delta \hat x^i_t+B_t \Delta \hat u^i_t+\Delta w^i_t=\Delta \hat x^i_{t+1}.
$
In addition, one arrives at:
$
\Delta u^i_{t+1}= \theta^n_{t+1} \Delta x^i_{t+1}=\theta^n_{t+1} \Delta \hat x^i_{t+1}=\Delta \hat u^i_{t+1}$.
According to~\eqref{eq:dynamics_joint},~\eqref{eq:thm-optimal-mfs-u_1},~\eqref{eq:dynamics-ns},~\eqref{eq:action-ns} and \eqref{eq:relative_error}, the proof is completed for SAPDE in~\eqref{eq:action-ns}. For SWMFE in~\eqref{eq:action-ns-inf},  a similar argument holds for $\tilde A^\infty_t$ in the finite-population game.

\section{Proof of Lemma~\ref{lemma:relative_dynamics_1}}\label{sec:proof_lemma:relative_dynamics_1} 
 From Definition~\ref{def:performance_gap}and equation~\eqref{eq:total_cost}, one has:
\begin{align}
&\Delta J^i_P= \mathbb{E} [\sum_{t=t_0}^T [ \begin{array}{c}
\Delta \hat x^i_t \\
\hat{\bar x}_t \\
\end{array}]^\intercal \mathbf Q_t [ \begin{array}{c}
\Delta \hat x^i_t \\
\hat{\bar x}_t \\
\end{array}] + [ \begin{array}{c}
\Delta \hat u^i_t \\
\hat{\bar u}_t \\
\end{array}]^\intercal \mathbf R_t [ \begin{array}{c}
\Delta \hat u^i_t \\
\hat{\bar u}_t \\
\end{array}]\\
& +\frac{1}{n}( \sum_{j \neq i} (\Delta \hat x^j_t)^\intercal G^x_t (\Delta \hat x^j_t) - \frac{1}{n-1} (\Delta \hat x^i_t)^\intercal G^x_t (\Delta \hat x^i_t))\\
&+\frac{1}{n}( \sum_{j \neq i} (\Delta \hat u^j_t)^\intercal G^u_t (\Delta \hat u^j_t) - \frac{1}{n-1} (\Delta \hat u^i_t)^\intercal G^u_t (\Delta \hat u^i_t))\\
& -[ \begin{array}{c}
\Delta x^i_t \\
\bar x_t \\
\end{array}]^\intercal \mathbf Q_t [ \begin{array}{c}
\Delta x^i_t \\
\bar x_t \\
\end{array}] - [ \begin{array}{c}
\Delta u^i_t \\
\bar u_t \\
\end{array}]^\intercal \mathbf  R_t [ \begin{array}{c}
\Delta u^i_t \\
\bar u_t \\
\end{array}]\\
& -\frac{1}{n}( \sum_{j \neq i} (\Delta x^j_t)^\intercal G^x_t (\Delta x^j_t) - \frac{1}{n-1} (\Delta x^i_t)^\intercal G^x_t (\Delta x^i_t))\\
& -\frac{1}{n}( \sum_{j \neq i} (\Delta u^j_t)^\intercal G^u_t (\Delta u^j_t) - \frac{1}{n-1} (\Delta u^i_t)^\intercal G^u_t (\Delta u^i_t))] \nonumber \\
&\substack{(a)\\=} \mathbb{E} [\sum_{t=t_0}^T   \begin{bmatrix}
 e^i_t \\
 e_t+ z_t \\
\end{bmatrix}^\intercal \mathbf Q_t \begin{bmatrix}
 e^i_t \\
 e_t +z_t\\
\end{bmatrix} - \begin{bmatrix}
 e^i_t \\
 \zeta_t+ z_t 
\end{bmatrix}^\intercal \mathbf Q_t \begin{bmatrix}
 e^i_t \\
 \zeta_t +z_t\\
\end{bmatrix}
 \nonumber\\
&+\begin{bmatrix}
\theta_t  e^i_t \\
\theta_t  e_t+ \bar \theta_t z_t 
\end{bmatrix}^\intercal \hspace{0cm}\mathbf  R_t\begin{bmatrix}
\theta_t e^i_t \\
\theta_t  e_t+ \bar \theta_t z_t 
\end{bmatrix}  \nonumber  \\
&-\begin{bmatrix}
\theta_t e^i_t \\
\bar \theta_t  \zeta_t+ \bar \theta_t z_t 
\end{bmatrix}^\intercal  \hspace{0cm} \mathbf R_t  \begin{bmatrix}
\theta  e^i_t \\
\bar \theta_t  \zeta_t+ \bar \theta_t z_t 
\end{bmatrix}   \nonumber \\
& \substack{(b)\\=} \mathbb{E} [\sum_{t=1}^T  \begin{bmatrix}
 e^i_t \\
 e_t 
\end{bmatrix}^\intercal \mathbf Q_t  \begin{bmatrix}
 e^i_t \\
 e_t 
\end{bmatrix}
- \begin{bmatrix}
 e^i_t \\
 \zeta_t
\end{bmatrix}^\intercal \mathbf Q_t \begin{bmatrix}
 e^i_t \\
 \zeta_t 
\end{bmatrix}\\
&+\begin{bmatrix}
\theta_t  e^i_t \\
\theta_t  e_t
\end{bmatrix}^\intercal \mathbf R_t \begin{bmatrix}
\theta_t e^i_t \\
\theta_t  e_t
\end{bmatrix} -\begin{bmatrix}
\theta_t e^i_t \\
\bar \theta_t \zeta_t
\end{bmatrix}^\intercal \mathbf  R_t\begin{bmatrix}
\theta_t e^i_t \\
\bar \theta_t \zeta_t
\end{bmatrix}], 
\end{align}
where $(a)$ follows from Lemma~\ref{lemma:breve_equality} and equations~\eqref{eq:thm-optimal-mfs-u_1},~\eqref{eq:action-ns} and~\eqref{eq:relative_error}, and $(b)$ is a consequence of $\Exp{e_t}=\Exp{\zeta_t}=\mathbf{0}_{d_x \times 1}$, on noting that  $z_t$ is deterministic. The proof is now complete from the definition of $\tilde Q_t$.

\section{Proof of Theorem~\ref{thm:NS-optimality}}\label{sec:proof_thm:NS-optimality}
Prior to the proof, we establish the following  useful lemma.
\begin{Lemma}\label{lemma:tilde_M}
At any time $t \in \mathbb{N}_T$,   ${{}\tilde M^n_t}^{1,1}= \Zero$.
\end{Lemma}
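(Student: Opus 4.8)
The plan is to prove this by a straightforward backward induction on $t$, exploiting two facts already established in the excerpt: the block-diagonal structure of the transition matrix $\tilde A^n_t$ from Lemma~\ref{lemma:breve_equality}, and the vanishing of the $(1,1)$ block of the weight matrix, $\tilde Q^n_t{}^{1,1}=\Zero$, from Lemma~\ref{lemma:relative_dynamics_1}. The key observation is that because $\tilde A^n_t=\DIAG(A_t+B_t\theta^n_t,\;A_t+\bar A_t+(B_t+\bar B_t)\theta^n_t,\;A_t+\bar A_t+(B_t+\bar B_t)\bar\theta^n_t)$ is block diagonal, the $(1,1)$ block of the Lyapunov recursion~\eqref{eq:lyapunov-finite} decouples entirely from the off-diagonal blocks of $\tilde M^n_{t+1}$.

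Concretely, for the block-diagonal matrix $\tilde A^n_t$ the $(1,1)$ block of the product $(\tilde A^n_t)^\intercal \tilde M^n_{t+1}\tilde A^n_t$ equals $(\tilde A^n_t{}^{1,1})^\intercal\,\tilde M^n_{t+1}{}^{1,1}\,\tilde A^n_t{}^{1,1}$, since cross terms are annihilated by the zero off-diagonal blocks of $\tilde A^n_t$. Reading off the $(1,1)$ block of~\eqref{eq:lyapunov-finite} and using $\tilde Q^n_t{}^{1,1}=\Zero$ from Lemma~\ref{lemma:relative_dynamics_1}, one obtains the homogeneous recursion
\begin{equation}
\tilde M^n_t{}^{1,1}=(A_t+B_t\theta^n_t)^\intercal\,\tilde M^n_{t+1}{}^{1,1}\,(A_t+B_t\theta^n_t),\qquad t_0\leq t\leq T.
\end{equation}

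I would then close the argument with the terminal condition $\tilde M^n_{T+1}=\mathbf{0}_{3d_x\times 3d_x}$ in~\eqref{eq:lyapunov-finite}, which gives $\tilde M^n_{T+1}{}^{1,1}=\Zero$ as the base case. For the inductive step, assuming $\tilde M^n_{t+1}{}^{1,1}=\Zero$, the displayed recursion immediately yields $\tilde M^n_t{}^{1,1}=(A_t+B_t\theta^n_t)^\intercal\,\Zero\,(A_t+B_t\theta^n_t)=\Zero$. Iterating backward from $t=T$ to $t=t_0$ establishes $\tilde M^n_t{}^{1,1}=\Zero$ for every $t\in\mathbb{N}_T$, completing the proof.

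There is no real obstacle here; the result is a purely structural consequence of the decoupling. The only point requiring care is the verification that the $(1,1)$ block of the quadratic form $(\tilde A^n_t)^\intercal\tilde M^n_{t+1}\tilde A^n_t$ depends solely on $\tilde M^n_{t+1}{}^{1,1}$ — which is exactly where the block-diagonal form of $\tilde A^n_t$ is essential — together with the matching vanishing of $\tilde Q^n_t{}^{1,1}$. This lemma is presumably used subsequently to simplify the performance-gap expansion~\eqref{eq:delta_expansion}, since it shows the initial-state covariance term $\VAR(x^i_{t_0}-\bar x_{t_0})$ entering through the $(1,1)$ slot of $H^{x,i}_{t_0}$ contributes nothing, so the gap is driven only by the mean-field-tracking error components.
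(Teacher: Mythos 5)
Your proof is correct and follows essentially the same route as the paper's: backward induction on the $(1,1)$ block of the Lyapunov recursion~\eqref{eq:lyapunov-finite}, using the block-diagonality of $\tilde A^n_t$ to decouple that block and ${{}\tilde Q^n_t}^{1,1}=\Zero$ from Lemma~\ref{lemma:relative_dynamics_1} to kill the inhomogeneous term. The only cosmetic difference is that you anchor the induction at $t=T+1$ whereas the paper starts at $t=T$; this is immaterial.
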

\begin{proof}
 From Lemma~\ref{lemma:relative_dynamics_1} and Theorem~\ref{thm:delta_j_correlated}, it follows that Lemma~\ref{lemma:tilde_M} holds at the terminal time, i.e. ${{}\tilde M^n_T}^{1,1}={{}\tilde Q^n_T}^{1,1}=\Zero$.  It can be  inductively shown  that Lemma~\ref{lemma:tilde_M} holds at time $t \in \mathbb{N}_{T-1}$, according to equation~\eqref{eq:lyapunov-finite} and Lemma~\ref{lemma:relative_dynamics_1}, i.e.
${{}\tilde M^n_t}^{1,1}= ({{}\tilde A^n_t}^{1,1})^\intercal {{}\tilde M^n_{t+1}}^{ \hspace{-.2cm} 1,1} {{}\tilde A^n_t}^{1,1}+{{}\tilde Q^n_t}^{1,1}=\Zero.$
\end{proof}
 From Assumption~\ref{assump:iid},  one has: $
\VAR(\bar x_1)=\frac{1}{n^2}\sum_{i=1}^n \VAR(x^i_1) \leq \frac{1}{n}C_x, 
\VAR(\bar w_t)=\frac{1}{n^2}\sum_{i=1}^n \VAR(w^i_t) \leq \frac{1}{n}C_w$,
where $C_x$ and $C_w$ are some  upper bounds on the auto-covariance matrices of the initial states and local noises (that do not depend on $n$). In addition, for any $i \in \mathbb{N}_n$,  
$
\Exp{(x^i_1-\bar x_1)\bar x_1^\intercal}=\Exp{x^i_1 \bar x_1^\intercal} - \Exp{\bar x_1 \bar x_1^\intercal}
=\frac{1}{n} \VAR(x^i_1)  -\VAR(\bar x_1).
$
Similarly, $\Exp{(w^i_t-\bar w_t)\bar w_t^\intercal}=\frac{1}{n} \VAR(w^i_t)  -\VAR(\bar w_t)$   at any  $t \in \mathbb{N}_T$. Consequently,   all  block matrices of the  matrices $H^x_{t_0}$ and $H^w_t$, except the  ones in the first row and  first column,  decay to zero at the rate $1/n$. On the other hand,  the   block matrices in the first row and  first column of matrices $H^x_{t_0}$ and $H^w_t$  have no effect on $\Delta J^i_P(\hat{\mathbf g}_n, \mathbf g^\ast_n)_{t_0}$,  according to  equation~\eqref{eq:delta_expansion} and Lemma~\ref{lemma:tilde_M}.  Consequently, one can conclude that $\Delta J^i_P(\hat{\mathbf g}_n, \mathbf g^\ast_n)_{t_0}$, $i \in \mathbb{N}_n$,  described in Definition~\ref{def:performance_gap}, converges to zero at the rate $1/n$ because  matrices $\tilde M^n_{t_0:T}$ are uniformly bounded with respect to $n$ according to Assumption~\ref{assump:independent_n}, Lemmas~\ref{lemma:breve_equality} and~\ref{lemma:relative_dynamics_1} and equation~\eqref{eq:lyapunov-finite}.  Thus,  NS strategy~\eqref{eq:action-ns} is SARE according to Definition~\ref{def:pDSS}, where $\varepsilon_{t_0}(n):=\Delta J_P(\hat{\mathbf g}_n, \mathbf g^\ast_n)_{t_0}$ converges  to zero at the rate $1/n$.

Suppose Assumption~\ref{ass:rationality_gap} holds. From Theorem~\ref{thm:aux}, the following inequality holds for any $\mathbf g^i \in \GNS$,
\begin{align}
J^i_n(\hat{\mathbf g}^i_n&,\hat{\mathbf g}^{-i}_n)_{t_0}= J^i_n(\hat{\mathbf g}^i_n,\hat{\mathbf g}^{-i}_n)_{t_0} \pm  J^i_n(\mathbf g^{\ast,i}_n, \mathbf g^{\ast,-i}_n)_{t_0} \\
 & \leq J^i_n(\mathbf g^{\ast,i}_n, \mathbf g^{\ast,-i}_n)_{t_0}+ \Delta J_P(\hat{\mathbf g}_n, \mathbf g^\ast_n)_{t_0}\\
  & \leq J^i_n(\mathbf g^i, \mathbf g^{\ast,-i}_n)_{t_0} + \Delta J_P(\hat{\mathbf g}_n, \mathbf g^\ast_n) \pm J^i_n(\mathbf g^i, \hat{\mathbf g}^{-i}_n)_{t_0}\\
  & \leq   J^i_n(\mathbf g^i, \hat{\mathbf g}^{-i}_n)_{t_0} +  \Delta J_P(\hat{\mathbf g}_n, \mathbf g^\ast_n)+  \Delta J_R(\hat{\mathbf g}_n, \mathbf g^\ast_n)_{t_0},
\end{align}
where $\Delta J_P(\hat{\mathbf g}_n, \mathbf g^\ast_n)_{t_0} $ converges to zero according to~\eqref{eq:epsilon_finite},  and  $\Delta J_R(\hat{\mathbf g}_n, \mathbf g^\ast_n)_{t_0}$ converges to zero according to Lemma~\ref{lemma:rationality_gap}. The proof is now completed from  Definition~\ref{def:pDSS2}.

\section{Proof of Theorem~\ref{cor:ns-infinite}}\label{sec:proof_cor:ns-infinite}
From Definitions~\ref{def:pDSS} and~\ref{def:performance_gap}, let $\varepsilon_{t_0}(n)=\max_{i \in \mathbb{N}_n} | J^i_n(\hat{\mathbf g}^i_\infty, \hat{\mathbf g}^{-i}_\infty)_{t_0}- J^i_n(\mathbf g^{\ast,i}_n, \mathbf g^{\ast,-i}_n)_{t_0}|$. It is shown in Lemma~\ref{lemma:PG_asymptotic} that the above  performance gap converges to zero as $n \rightarrow \infty$ under Assumption~\ref{ass:existence_finite}. Since Assumption~\ref{ass:invertible_infinite_pop} satisfies Assumption~\ref{ass:existence_finite} for homogeneous weights, according to Corollary~\ref{cor:sufficient_7}, the proof of the first part follows accordingly.

 To prove the second part, one can use Lemmas~\ref{lemma:PG_asymptotic} and~\ref{lemma:rationality_gap} in a similar way proposed in Theorem~\ref{thm:NS-optimality} to  establish the convergence result under Assumption~\ref{ass:existence_finite}; however,  this approach comes at a price that the finite-population solution must exist. The beauty of the asymptotic Nash equilibrium is that it does not need such an assumption, and one can still establish the convergence result without  Assumption~\ref{ass:existence_finite} by replacing the finite-population solution with the infinite-population one. In particular,  from the triangle inequality, one has the following for every $n \geq n_0$ and $\mathbf g^i \in \GNS \subseteq \GC$:
\begin{align}
&J^i_n(\hat{\mathbf g}^i_\infty,\hat{\mathbf g}^{-i}_\infty)_{t_0}= J^i_n(\hat{\mathbf g}^i_\infty,\hat{\mathbf g}^{-i}_\infty)_{t_0} \pm  J^i_n(\mathbf g^{\ast,i}_\infty, \mathbf g^{\ast,-i}_\infty)_{t_0} \\
 & \leq J^i_n(\mathbf g^{\ast,i}_\infty, \mathbf g^{\ast,-i}_\infty)_{t_0} + \max_{i \in \mathbb{N}_n}|J^i_n(\hat{\mathbf g}^i_\infty,\hat{\mathbf g}^{-i}_\infty)_{t_0} - J^i_n(\mathbf g^{\ast,i}_\infty, \mathbf g^{\ast,-i}_\infty)_{t_0}  |\\
  &\substack{(a)\\ \leq} J^i_n(\mathbf g^i, \mathbf g^{\ast,-i}_\infty)_{t_0} +
  \max_{i \in \mathbb{N}_n}|J^i_\infty(\mathbf g^i, \mathbf g^{\ast,-i}_\infty)_{t_0}  - J^i_n(\mathbf g^i, \mathbf g^{\ast,-i}_\infty)_{t_0}  |\\
  &+   \max_{i \in \mathbb{N}_n}|J^i_n(\hat{\mathbf g}^i_\infty,\hat{\mathbf g}^{-i}_\infty)_{t_0} - J^i_n(\mathbf g^{\ast,i}_\infty, \mathbf g^{\ast,-i}_\infty)_{t_0}  |  J^i_n(\mathbf g^i,\hat{\mathbf g}^{-i}_\infty)_{t_0}\\ 
  &\substack{(b)\\ \leq} J^i_n(\mathbf g^i,\hat{ \mathbf g}^{-i}_\infty)_{t_0} +\bar \varepsilon_{t_0}(n),
\end{align}
where $(a)$ follows from  Theorem~\ref{thm:aux} and the triangle inequality,   and $(b)$ follows from 
\begin{align}\label{eq:bar_vareepsilon}
\bar \varepsilon_{t_0}(n)&:= \max_{i \in \mathbb{N}_n}|J^i_\infty(\mathbf g^i, \mathbf g^{\ast,-i}_\infty)_{t_0}  - J^i_n(\mathbf g^i, \mathbf g^{\ast,-i}_\infty)_{t_0}  |\nonumber \\
&\quad + \max_{i \in \mathbb{N}_n}|J^i_n(\hat{\mathbf g}^i_\infty,\hat{\mathbf g}^{-i}_\infty)_{t_0} - J^i_n(\mathbf g^{\ast,i}_\infty, \mathbf g^{\ast,-i}_\infty)_{t_0}  |\nonumber \\
&\quad+ \sup_{\mathbf g^i}| J^i_n(\mathbf g^i,\hat{\mathbf g}^{-i}_\infty)_{t_0}-J^i_n(\mathbf g^i,\mathbf g^{\ast,-i}_\infty)_{t_0}|.
\end{align}
The first term of the right-hand side of inequality~\eqref{eq:bar_vareepsilon} converges to zero because $J^i_n(\mathbf g^i, \mathbf g^{\ast,-i}_\infty)_{t_0}$ is continuous and uniformly bounded in $n$ under population-size-independent model condition. The second term of the right-hand side of inequality~\eqref{eq:bar_vareepsilon} is zero, almost surely, because $\hat{\mathbf g}_\infty$ in~\eqref{eq:action-ns-inf} is equal to $\mathbf g^\ast_\infty$ in~\eqref{eq:thm-optimal-mfs-u_2} with probability one, due to the strong law of large numbers, Assumption~\ref{assump:iid}, and Definition~\ref{def:PI_prediction}. The third term  of the right-hand side of inequality~\eqref{eq:bar_vareepsilon}  can be shown to converge  to zero as $n \rightarrow \infty$ as well, where the proof  follows  the same steps as those of the proof of Lemma~\ref{lemma:rationality_gap}, where   $\hat{\mathbf g}_n$ and $\mathbf g^\ast_n$ are replaced by  $\hat{\mathbf g}_\infty$ and $\mathbf g^\ast_\infty$, respectively.
 
 \section{Proof of Theorem~\ref{thm:mfs-inf_inf}}\label{sec:proof_thm:mfs-inf_inf}
 The proof follows from the notion of $\varepsilon^T$-perfect equilibrium  of a truncated game  by  horizon (stage) $T$, where as $T \rightarrow \infty$,  $\varepsilon^T\rightarrow 0$. In particular, given a discounted cost repeated game, it is shown in~\cite[Theorem 3.3]{fudenberg1983subgame} that the finite-horizon solution converges to the infinite-horizon one, as $T \rightarrow \infty$, i.e.,   $\varepsilon^T$-perfect equilibrium converges to an equilibrium in the infinite horizon. To avoid repetition, we only show herein that the finite-horizon solution (which is proved in Theorem~\ref{thm:aux} to be the sub-game-perfect Nash equilibrium) converges to the infinite-horizon solution, and refer the interested reader to a procedure described in~\cite{fudenberg1983subgame}  for constructing a sequence of  $\varepsilon^T$-perfect strategies that converge to the infinite-horizon limit.

 For any  game with finite  horizon $T$, fix  the strategies of all players but player $i$.  From the proof of Theorem~\ref{thm:aux},  the best response strategy of player $i \in \mathbb{N}_n$ can be identified by the following dynamic program~\eqref{eq:DP-general-form}  for player $i \in \mathbb{N}_n$ at time $t \in \mathbb{N}_T$, i.e.,  
\begin{equation}\label{eq:DP_infinite_horizon_proof}
V^i_t(\tilde{ \mathbf x}_t)=(1-\gamma) \min_{u^i_t}(\Exp{\gamma^{t-1} c^i(\tilde{ \mathbf x}_t,\tilde{ \mathbf u}_t)+ V^i_{t+1}(\tilde{\mathbf x}_{t+1})\mid \tilde{\mathbf x}_t,\tilde{ \mathbf u}_t }).
\end{equation} 
Define the non-standard Riccati equation  for any $\alpha \in \mathcal{A}_n$:
    \begin{equation}\label{eq:riccati-bar-m_test}
\Compress
  \mathbf P^{\alpha}_t=\gamma^{t-1}( \mathbf  Q^{\alpha}+ (\boldsymbol \theta^{\alpha}_t)^\intercal \mathbf R^{\alpha} \boldsymbol \theta^{\alpha}_t)+(\mathbf A+\mathbf B \boldsymbol \theta^{\alpha}_t)^\intercal \mathbf P^{\alpha}_{t+1}(\mathbf A+\mathbf B \boldsymbol \theta^{\alpha}_t),
  \end{equation} 
  where $ \mathbf P^\alpha_{T+1}=\mathbf 0_{2 d_x \times 2 d_x}$,
and  $\boldsymbol \theta^\alpha_t=:\DIAG(\theta_t(\alpha), \bar \theta_t(\alpha))$,  $\theta_t(\alpha)$ and  $ \bar \theta_t(\alpha)$ are described by:
\begin{align}\label{eq:breve-f_test}
&  \theta_t(\alpha):= (\DeltaF)^{-1} \DeltaK, \quad  \bar \theta^\alpha_t:=(\barF)^{-1} \barK, \nonumber \\
&\DeltaF:= (1-\alpha)\Big[\gamma^{t-1}(R+ \frac{\alpha}{1-\alpha}G^u) + B^\intercal {{}\mathbf P_{t+1}^{\alpha}}^{\hspace{-.2cm} 1,1} B  \Big] \nonumber  \\
&\quad + \alpha \Big[\gamma^{t-1}(R + S^u) +(B+\bar B)^\intercal {{}\mathbf P_{t+1}^\alpha}^{\hspace{-.2cm}1,2} B \Big], \nonumber \\
&\barF:= (1-\alpha)\left[\gamma^{t-1}(R+ S^u) + B^\intercal {{}\mathbf P_{t+1}^\alpha}^{\hspace{-.2cm} 2,1} (B+\bar B)  \right]  \nonumber \\
 &+ \alpha \left[\gamma^{t-1}(R + 2S^u + \bar R+ G^u)+ (B+\bar B)^\intercal {{}\mathbf P_{t+1}^\alpha}^{ \hspace{-.2cm}2,2} (B+ \bar B)  \right], \nonumber \\
&\DeltaK:= -(1-\alpha) \Big[B^\intercal {{}\mathbf P_{t+1}^\alpha}^{\hspace{-.2cm }1,1} A  \Big] - \alpha \Big[(B + \bar B)^\intercal {{}\mathbf P_{t+1}^\alpha}^{\hspace{-.2cm} 1,2} A   \Big], \nonumber \\
&\barK:= -(1-\alpha) \left[B^\intercal {{}\mathbf P_{t+1}^\alpha}^{\hspace{-.2cm} 2,1} (A+\bar A)  \right] \nonumber  \\ 
&\hspace{.9cm}- \alpha \left[(B + \bar B)^\intercal {{}\mathbf P_{t+1}^\alpha}^{ \hspace{-.2cm} 2,2} (A+\bar A)   \right].
\end{align}
Furthermore,  define
$\DeltaM:=\gamma^{t-1}(G^x+  (\theta^\alpha_t)^\intercal G^u \theta^\alpha_t )+ (A-B \theta^\alpha_t) P^{\alpha}_{t+1} (A-B\theta^\alpha_t)$. Hence, one arrives at:
 \begin{multline}\label{eq:value-function-form_inf}
V^i_t(\tilde{\mathbf x}_t)=(1-\gamma)\big(  \begin{bmatrix}
\Delta  x^i_t \\
\bar{ x}_t
\end{bmatrix}^\intercal \mathbf P^{\alpha^i_n}_t \begin{bmatrix}
\Delta x^i_t \\
\bar x_t
\end{bmatrix}+ \ell^{i}_t\\
+ \sum_{j \neq i} \alpha^j_n (\Delta x^j_t)^\intercal P^{\alpha^i_n}_t (\Delta x^j_t) - \frac{(\alpha^i_n)^2}{1-\alpha^i_n} (\Delta x^i_t)^\intercal  P^{\alpha^i_n}_t (\Delta x^i_t)\big).
\end{multline}
where  $\ell^{i}_t:=\ell^{i}_{t+1}+  \TR \big(\VAR(\VEC(\Delta w^i_t,\bar w_t))\mathbf P^{\alpha^i_n}_{t+1}\big)   + \sum_{j\neq i}^n \hspace{-.1cm}\alpha^j_n  \TR \big(\VAR(\Delta w^j_t) P^{\alpha^i_n}_{t+1}\big)- \frac{(\alpha^i_n)^2\TR (\VAR(\Delta w^i_t) P^{\alpha^i_n}_{t+1})}{1-\alpha^i_n}$. Denote by $\bar \Sigma^j=\VAR(\VEC(\Delta w^j_t,\bar w_t))$  and $\Sigma^i= \VAR(\Delta  w^j_t)$,  $t \in \mathbb{N}_T,$ the  time-homogeneous auto-covariance matrices associated with any player $j \in \mathbb{N}_n$. 

 Let  $W^i_{t}(\tilde{\mathbf x}):=\gamma^{-t'-1} V^i_{t'+2}(\tilde{\mathbf x})$ for every $i \in \mathbb{N}_n$,  where  $t':=T-t$, $t \in \mathbb{N}_{T+1}$. Let also $\boldsymbol  \Phi^{\alpha}_t:=\gamma^{-t'-1} \mathbf P^{\alpha}_{t'+2}$ and $ \Phi^{\alpha}_t:= \gamma^{-t'-1} P^{\alpha}_{t'+2}$, $\alpha \in \mathcal{A}_n$.  From~\eqref{eq:DP_infinite_horizon_proof} and~\eqref{eq:value-function-form_inf}, one arrives at the following equation:
 \begin{multline}\label{eq:BE_test}
W^i_{t+1}(\tilde{\mathbf x}_{t'+1})=(1-\gamma) \min_{u^i_{t'+1}}(\Exp{c^i(\tilde{ \mathbf x}_{t'+1},\tilde{ \mathbf u}_{t'+1})\\
+\gamma W^i_{t}(\tilde{\mathbf x}_{t'+2})\mid \tilde{\mathbf x}_{t'+1},\tilde{ \mathbf u}_{t'+1} }),
\end{multline} 
 where
\begin{align}\label{eq:value-function-form_inf1}
W^i_{t+1}(\tilde{\mathbf x}_{t'+1})&=(1-\gamma) \gamma^{-t'}\big(  \begin{bmatrix}
\Delta  x^i_{t'+1} \\
\bar{ x}_{t'+1}
\end{bmatrix}^\intercal \boldsymbol  \Phi^{\alpha^i_n}_{t+1} \begin{bmatrix}
\Delta x^i_{t'+1}\\
\bar x_{t'+1}
\end{bmatrix}\\
&+\tilde \ell^{i}_{t+1}+ \sum_{j \neq i} \alpha^j_n (\Delta x^j_{t'+1})^\intercal  \Phi^{\alpha^i_n}_{t+1} (\Delta x^j_{t'+1}) \\
&- \frac{(\alpha^i_n)^2}{1-\alpha^i_n} (\Delta x^i_{t'+1})^\intercal   \Phi^{\alpha^i_n}_{t+1} (\Delta x^i_{t'+1})\big),
\end{align}
and  $\tilde \ell^{i}_{t+1}:=\tilde \ell^{i}_{t}+ \gamma\big( \TR(\bar \Sigma^i\boldsymbol \Phi^{\alpha^i_n}_{t})    + \sum_{j \neq i }^n \hspace{-.1cm}\alpha^j_n  \TR (\Sigma^j \Phi^{\alpha^i_n}_{t+1})- \frac{(\alpha^i_n)^2\TR (\Sigma^i \Phi^{\alpha^i_n}_{t+1})}{1-\alpha^i_n}\big)$. Subsequently, the  control laws in~\eqref{eq:breve-f_test}  can be expressed as follows:
\begin{align}\label{eq:breve-f_test2}
&  \theta_{t'}(\alpha):= (F_{t'})^{-1} K_{t'}, \quad  \bar \theta^\alpha_{t'}:=(\bar F_{t'})^{-1} \bar K_{t'}, \nonumber \\
&F_{t'}(\alpha):= (1-\alpha)\Big[R+ \frac{\alpha}{1-\alpha}G^u+  \gamma B^\intercal {{}\boldsymbol  \Phi_{t+1}^{\alpha}}^{\hspace{-.2cm} 1,1} B  \Big] \nonumber  \\
&\quad + \alpha \Big[R + S^u +  \gamma(B+\bar B)^\intercal {{}\boldsymbol \Phi_{t+1}^\alpha}^{\hspace{-.2cm}1,2} B \Big], \nonumber \\
&\bar F_{t'}(\alpha)= (1-\alpha)\left[R+ S^u +\gamma B^\intercal {{}\boldsymbol \Phi_{t+1}^\alpha}^{\hspace{-.2cm} 2,1} (B+\bar B)  \right]  \nonumber \\
 &+ \alpha \left[R + 2S^u + \bar R+ G^u+ \gamma (B+\bar B)^\intercal {{}\boldsymbol \Phi_{t+1}^\alpha}^{ \hspace{-.2cm}2,2} (B+ \bar B)  \right], \nonumber \\
&K_{t'}(\alpha)= -(1-\alpha) \gamma(B^\intercal {{}\boldsymbol \Phi_{t+1}^\alpha}^{\hspace{-.2cm }1,1} A ) \nonumber \\
&\hspace{1.5cm}- \alpha  \gamma ((B + \bar B)^\intercal {{}\boldsymbol \Phi_{t+1}^\alpha}^{\hspace{-.2cm} 1,2} A   )), \nonumber \\
&\bar K_{t'}(\alpha):= -(1-\alpha)  \gamma (B^\intercal {{}\boldsymbol \Phi_{t+1}^\alpha}^{\hspace{-.2cm} 2,1} (A+\bar A)) \nonumber  \\ 
&\hspace{1.5cm}- \alpha  \gamma ((B + \bar B)^\intercal {{}\boldsymbol \Phi_{t+1}^\alpha}^{ \hspace{-.2cm} 2,2} (A+\bar A) ).
\end{align}
Due to the uniformly bounded assumption imposed on the admissible actions in Section~\ref{sec:problem} and the fact that the discount factor is less than one, it is well-known that the Bellman equation~\eqref{eq:BE_test} is a contractive mapping and admits a unique solution. In particular, with a slight abuse of notation, one has:
 \begin{equation} 
 \Compress
W^i_{\infty}(\tilde{\mathbf x}_{t'})=(1-\gamma) \min_{u^i_{t'}}(\Exp{c^i(\tilde{ \mathbf x}_{t'},\tilde{ \mathbf u}_{t'})+\gamma W^i_{\infty}(\tilde{\mathbf x}_{t'+1})\mid \tilde{\mathbf x}_{t'},\tilde{ \mathbf u}_{t'} }),
\end{equation} 
where under Assumption~\ref{ass:invertible-algebraic} and  for any $\alpha \in \mathcal{A}_n$, 
\begin{equation}\label{eq:change_proof_inf1}
\Compress
\lim_{T \rightarrow \infty}  \boldsymbol  \Phi^{\alpha}_{t+1}=\lim_{T\rightarrow \infty} \gamma^{-T+t} \mathbf P_{T-t+1}=\mathbf P^{\alpha}, 
\end{equation}
\begin{equation}\label{eq:change_proof_inf2}
\Compress
 \lim_{T \rightarrow \infty}  \Phi^{\alpha}_{t+1}=\lim_{T \rightarrow \infty} \gamma^{-T+t} P^{\alpha}_{T-t+1}=P^{\alpha}.
\end{equation}
Therefore, from~\eqref{eq:breve-f_test2},~\eqref{eq:change_proof_inf1}  and~\eqref{eq:change_proof_inf2},  one can conclude that the terms on  the right-hand side of~\eqref{eq:breve-f_test2} are  time-homogeneous, as  $T \rightarrow \infty$, i.e.,  stationary strategies~\eqref{eq:thm-optimal-mfs-u_1_inf},~\eqref{eq:thm-optimal-mfs-u_2_inf} and~\eqref{eq:thm-optimal-mfs-u_3_inf} are the limit of their finite-horizon counterparts.  
\section{Proof of Proposition~\ref{proposition:hurwitz}}\label{sec:proof_proposition:hurwitz}
From  equations~\eqref{eq:dynamics_joint},~\eqref{eq:thm-optimal-mfs-u_1_inf} and~\eqref{eq:deterministic_process},   $\bar x_{t+1}$ and $z^n_{t+1}$  have  the same  dynamics (except the drift terms) under strategies  $\{ \mathbf g^{\ast,i}_n,\mathbf g^{\ast,-i}_n\}$ and $\{\hat{ \mathbf g}^{i}_n,\hat{\mathbf g}^{-i}_n\}$, respectively,  implying that  trajectory $z^n_{1:\infty}$ is bounded because  trajectory $\bar x_{1:\infty}$ is bounded    under Assumption~\ref{ass:invertible-algebraic} according to Theorem~\ref{thm:mfs-inf_inf}. On the other hand, the relative distances in Lemma~\ref{lemma:breve_equality} are also bounded because matrix $\tilde A^n$ is Hurwitz. Therefore, trajectory $\hat{\bar x}_{1:\infty}$  should be  bounded, which means $\hat{\bar u}_{1:\infty}$  is  also bounded, according to~\eqref{eq:action-ns}. Consequently, the   infinite-horizon discounted cost function $J^{i,\gamma}_n(\hat{\mathbf g}^{i}_n,\hat{\mathbf g}^{-i}_n)$ is  bounded. A similar argument holds for $\{ \mathbf g^{\ast,i}_\infty,\mathbf g^{\ast,-i}_\infty\}$ and $\{\hat{ \mathbf g}^{i}_\infty,\hat{\mathbf g}^{-i}_\infty\}$. 
\end{document}